\begin{document}

\newcommand{\ci}[1]{_{ {}_{\scriptstyle #1}}}

\newcommand{\norm}[1]{\ensuremath{\left\|#1\right\|}}
\newcommand{\Norm}[1]{\ensuremath{\Big\|#1\Big\|}}
\newcommand{\abs}[1]{\ensuremath{\left\vert#1\right\vert}}
\newcommand{\ip}[2]{\ensuremath{\left\langle#1,#2\right\rangle}}
\newcommand{\Ip}[2]{\ensuremath{\Big\langle#1,#2\Big\rangle}}
\newcommand{\adj}[1]{#1^{*}}
\newcommand{\p}{\ensuremath{\partial}}
\newcommand{\pr}{\mathcal{P}}

\newcommand{\pbar}{\ensuremath{\bar{\partial}}}
\newcommand{\db}{\overline\partial}
\newcommand{\D}{\mathbb{D}}
\newcommand{\B}{\mathbb{B}}
\newcommand{\Sp}{\mathbb{S}}
\newcommand{\T}{\mathbb{T}}
\newcommand{\R}{\mathbb{R}}
\newcommand{\Z}{\mathbb{Z}}
\newcommand{\C}{\mathbb{C}}
\newcommand{\Cd}{\mathbb{C}^{d}}
\newcommand{\N}{\mathbb{N}}
\newcommand{\scrH}{\mathcal{H}}
\newcommand{\scrL}{\mathcal{L}}
\newcommand{\td}{\widetilde\Delta}
\newcommand{\rp}{\right)}
\newcommand{\lp}{\left(}
\newcommand{\h}{\ell^{2}}
\renewcommand{\l}[1]{\mathcal{L}{#1}}
\newcommand{\bo}{\mathcal{B}(\Om)}
\newcommand{\lh}{L_{\l(\h)}^{\infty}}
\newcommand{\lf}{L_{\textnormal{fin}}^{\infty}}
\newcommand{\m}{\mathfrak{d}}

\newcommand{\BB}{\mathcal{B}}
\newcommand{\HH}{\mathcal{H}}
\newcommand{\KK}{\mathcal{K}}
\newcommand{\LL}{\mathcal{L}}
\newcommand{\MM}{\mathcal{M}}
\newcommand{\FF}{\mathcal{F}}

\newcommand{\Om}{\Omega}
\newcommand{\La}{\Lambda}

\newcommand{\rk}{\operatorname{rk}}
\newcommand{\card}{\operatorname{card}}
\newcommand{\ran}{\operatorname{Ran}}
\newcommand{\osc}{\operatorname{OSC}}
\newcommand{\im}{\operatorname{Im}}
\newcommand{\re}{\operatorname{Re}}
\newcommand{\tr}{\operatorname{tr}}
\newcommand{\vf}{\varphi}
\newcommand{\f}[2]{\ensuremath{\frac{#1}{#2}}}

\newcommand{\kzp}{k_z^{(p,\alpha)}}
\newcommand{\klp}{k_{\lambda_i}^{(p,\alpha)}}
\newcommand{\TTp}{\mathcal{T}_p}

\newcommand{\vp}{\varphi}
\newcommand{\al}{\alpha}
\newcommand{\be}{\beta}
\newcommand{\la}{\lambda}
\newcommand{\li}{\lambda_i}
\newcommand{\lb}{\lambda_{\beta}}
\newcommand{\Bo}{\mathcal{B}(\Omega,\C)}
\newcommand{\boa}{\mathcal{B}_{\mathcal{A}}(\Omega)}
\newcommand{\Boa}{\mathcal{B}_{\mathcal{A}}(\Omega,\C)}
\newcommand{\Bbp}{\mathcal{B}_{\beta}^{p}}
\newcommand{\Bbt}{\mathcal{B}(\Omega)}
\newcommand{\Lbt}{L_{\beta}^{2}}
\newcommand{\Kz}{K_z}
\newcommand{\kz}{k_z}
\newcommand{\Kl}{K_{\lambda_i}}
\newcommand{\kl}{k_{\lambda_i}}
\newcommand{\Kw}{K_w}
\newcommand{\kw}{k_w}
\newcommand{\Kbz}{K_z}
\newcommand{\Kbl}{K_{\lambda_i}}
\newcommand{\kbz}{k_z}
\newcommand{\kbl}{k_{\lambda_i}}
\newcommand{\Kbw}{K_w}
\newcommand{\kbw}{k_w}
\newcommand{\BL}{\mathcal{L}\left(\mathcal{B}(\Omega), L^2(\Om,\h;d\sigma)\right)}
\newcommand{\fl}{\mathcal{L}}
\newcommand{\ml}{M_{I^{(d)}}}

\newcommand{\af}{\mathfrak{a}}
\newcommand{\bb}{\mathfrak{b}}
\newcommand{\cc}{\mathfrak{c}}

\newcommand{\entrylabel}[1]{\mbox{#1}\hfill}

\newenvironment{entry}
{\begin{list}{X}%
  {\renewcommand{\makelabel}{\entrylabel}%
      \setlength{\labelwidth}{55pt}%
      \setlength{\leftmargin}{\labelwidth}
      \addtolength{\leftmargin}{\labelsep}%
   }%
}%
{\end{list}}


\numberwithin{equation}{section}

\newtheorem{thm}{Theorem}[section]
\newtheorem{lm}[thm]{Lemma}
\newtheorem{cor}[thm]{Corollary}
\newtheorem{conj}[thm]{Conjecture}
\newtheorem{prob}[thm]{Problem}
\newtheorem{prop}[thm]{Proposition}
\newtheorem*{prop*}{Proposition}

\theoremstyle{remark}
\newtheorem{rem}[thm]{Remark}
\newtheorem*{rem*}{Remark}
\newtheorem{example}[thm]{Example}

\title[$\h$--valued Bergman--type function spaces]
{A Reproducing Kernel Thesis for Operators on 
$\h$--valued Bergman-type Function Spaces}

\author[R. Rahm]{Robert Rahm}
\address{Robert S. Rahm, School of Mathematics\\ Georgia Institute of Technology
\\ 686 Cherry Street\\ Atlanta, GA USA 30332-0160}
\email{rrahm3@math.gatech.edu}
\urladdr{www.math.gatech.edu/~rrahm3}
\subjclass[2000]{32A36, 32A, 47B05, 47B35}
\keywords{Berezin Transform, Compact Operators, Bergman Space, Essential Norm, 
Toeplitz Algebra, Toeplitz Operator, vector--valued Bergman Space}

\begin{abstract}
In this paper we consider the reproducing kernel thesis for boundedness and 
compactness for operators on $\h$--valued Bergman-type spaces. This
paper generalizes many well--known results about classical function spaces
to their $\h$--valued versions. In particular, the 
results in this paper apply to the weighted $\h$--valued Bergman space on the 
unit ball, the unit polydisc and, more generally to weighted Fock spaces.  
\end{abstract}

\maketitle

\section{Introduction}

In \cite{MW2}, Mitkovski and Wick show that in a wide variety of classical 
functions spaces (they call these spaces Bergman--type function spaces), many 
properties of an operator can be determined by studying its behavior on the 
normailzed reproducing kernels. Thus, their results are ``Reproducing Kernel
Thesis'' (RKT) statements. 

The unified approach developed in \cite{MW2} was used to solve 
two types of problems relating to operators on classical function spaces:
boundedness and compactness. The goal of this paper is to extend this approach
to the case of $\h$--valued Bergman type function spaces and to prove results
relating to boundedness and compactness of operators
for a general class of $\h$--valued Bergman type function spaces. 

The proofs in
this paper are essentially the same as the corresponding proofs from \cite{MW2}.
The only adjustments are that our integrals are now vector--valued and
we must use a version of the classical Schur's test for integral operators with
matrix--valued kernels. This is Lemma~\ref{MSchur}. While this lemma is not 
deep, and is probably known (or at least expected) by experts, we were unable
to find it in the literature.

The paper is organized as follows. In Section~\hyperref[vvbts]{2}, we give a 
precise
definition of $\h$--valued Bergman--type spaces and prove some of their
basic properties. In Section~\hyperref[bdd]{3}, we prove RKT statements for 
boundedness
and extend several classical results about Toeplitz and Hankel operators to the 
$\h$--valued setting. In Section~\hyperref[RKTComp]{4}, we prove RKT statements for 
compactness. In the final section, Section~\hyperref[dens]{5}, 
we show that an operator is compact if and 
only if 
it is in the Toeplitz algebra and its Berezin transform vanishes on the 
boundary of $\Om$.

\section{$\h$--Valued Bergman-type spaces}\label{vvbts}
Before we can define the $\h$--valued Bergman--type spaces, we will need to
make some general definitions regrading $\h$--valued functions.
Let $\Om$ be a domain (connected open set) in $\mathbb{C}^{n}$, let 
$\mu$ be a measure on $\Om$ and let 
$\{e_k\}_{k=1}^{\infty}$ be the standard orthonormal basis for $\h$. 
We say that function
$f:\Om\to\h$ is $\mu$--measurable (analytic) if for each $k\in\N$ the function 
$z\mapsto \ip{f(z)}{e_k}_{\h}$ is $\mu$--measurable (analytic) on $\Om$. For
a $\mu$--measurable set $E\subset\Om$, and a $\mu$--measurable function $f$,
we define the integral of $f$ over $E$:
\begin{align*}
\int_{E}fd\mu := \sum_{k=1}^{\infty}
    \left(\int_{E}\ip{f}{e_k}_{\h}d\mu\right)e_k.
\end{align*}
This is a well--defined element of $\h$ whenever
\begin{align*}
\sum_{k=1}^{\infty}\abs{\int_{E}\ip{f}{e_k}_{\h}d\mu}^{2}<\infty.
\end{align*}
The space $L^{2}(\Om,\C;\mu)$ is the space of all $\C$--valued 
$\mu$--measurable functions, $g$, such that
\begin{align*}
\norm{g}_{L^{2}(\Om,\C;\mu)}^{2}:=\int_{\Om}\abs{g}^{2}d\mu <\infty. 
\end{align*}
The space $L^{2}(\Om,\h;\mu)$ is the space of all $\h$--valued measurable
functions, $f$, such that 
\begin{align*}
\norm{f}_{L^{2}(\Om,\h;\mu)}^{2}
:=\int_{\Om}\norm{f}_{\h}^{2}d\mu.
\end{align*}

\noindent Note that $L^{2}(\Om,\h;\mu)$ is a Hilbert space with inner product:
\begin{align*}
\ip{f}{g}_{L^{2}(\Om,\h;\mu)}:=\int_{\Om}\ip{f}{g}_{\h}d\mu
\end{align*} 
and in this case
\begin{align*}
\norm{f}_{L^{2}(\Om,\h;\mu)}^{2}
=\sum_{k=1}^{\infty}\int_{\Om}\abs{\ip{f}{e_k}_{\h}}^{2}d\mu
=\sum_{k=1}^{\infty}\norm{\ip{f}{e_k}}_{L^{2}(\Om,\C;d\mu)}^{2},
\end{align*}
where $\ip{\cdot}{\cdot}_{\h}$ is the standard inner product on $\h$.
The spaces we will consider in this paper are spaces of functions that
take values in $\h$. However, we will also have occasion to discuss some spaces 
of $\ell^{p}$--valued functions. We will refer to such spaces as 
``vector--valued function spaces''. 

The space $L^{p}(\Om,\C;\mu)$ is the space of all $\C$--valued 
$\mu$--measurable functions, $g$, such that
\begin{align*}
\norm{g}_{L^{p}(\Om,\C;\mu)}^{p}:=\int_{\Om}\abs{g}^{p}d\mu <\infty. 
\end{align*}
The space $L^{p}(\Om,\ell^p;\mu)$ is the space of all $\ell^p$--valued 
measurable functions, $f$, such that 
\begin{align*}
\norm{f}_{L^{p}(\Om,\h;\mu)}^{p}
:=\int_{\Om}\norm{f}_{\ell^p}^{p}d\mu
=\sum_{k=1}^{\infty}\int_{\Om}\abs{\ip{f}{e_k}_{\h}}^{p}d\mu.
\end{align*}

The functions $\norm{\cdot}_{L^{p}(\Om,\ell^p;\mu)}$ clearly satisfy 
$\norm{\lambda f}_{L^{p}(\Om,\ell^p;\mu)}=
\lambda\norm{f}_{L^{p}(\Om,\ell^p;\mu)}$
and the triangle inequality.
If we identify two functions if $\norm{f(z)-g(z)}_{\ell^p}=0$ for $\mu$-a.e. 
$z\in\Om$ then $\norm{\cdot}_{L^{p}(\Om,\ell^p;\mu)}$ is positive definite.
Therefore, the functions  $\norm{\cdot}_{L^{p}(\Om,\ell^p;\mu)}$ define
norms. The spaces are also complete (see, for example, \cite{G}) and so they
are all Banach spaces.

We introduce a large class of 
$\h$--valued reproducing kernel Hilbert spaces that will form 
an abstract framework for our results. Due to their similarities with the 
classical Bergman space we call them 
$\h$--valued Bergman-type spaces.  In defining the key 
properties of these spaces, we use the standard notation that $A\lesssim B$ to 
denote that there exists a constant $C$ such that $A\leq C B$.  And, $A\simeq B$
which means that $A\lesssim B$ and $B\lesssim A$.

Below we list the defining properties of these spaces.

\begin{itemize}

\item[\label{A1} A.1] Let $\Om$ be a domain (connected open set) in $\C^n$ which
contains the origin. We assume that for each $z\in\Om$, there exists an 
involution $\varphi_z \in \textnormal{Aut}(\Om)$ satisfying $\varphi_z(0)=z$. 

\item[\label{A2} A.2] We assume the existence of a metric $\m$ on $\Omega$ 
which is quasi-invariant under $\vp_z$, i.e., $\m(u,v)\simeq 
\m(\vp_z(u),\vp_z(v))$ with the implied constants independent
of $u,v\in\Om$. In addition, we assume that the metric space $(\Om, \m)$ is 
separable and finitely compact, i.e., every closed ball in $(\Om, \m)$ is 
compact. As usual, we denote by $D(z, r)$ the disc centered at $z$ with radius 
$r$ with respect to the metric $\m$. 

\item[\label{A3} A.3] We assume the existence of a finite Borel measure $\sigma$
on $\Om$ and define $\bo$ to be the space of $\h$--valued analytic functions 
on $\Om$ equipped with the $L^2(\Om,\h;d\sigma)$ norm. 
We shall also have occasion to consider the 
space of $\C$--valued analytic functions on $\Om$ that are also in 
$L^{2}(\Om,\C;d\sigma)$. We will denote this space by $\Bo$. Note that this
space is the ``scalar--valued'' Bergman--type space as defined in \cite{MW2}.
Everywhere in the paper, $\norm{\,\cdot\,}_{\bo}$ 
and $\ip{\,\cdot}{\cdot\,}_{\bo}$ will denote the norm and the inner 
product in $L^2(\Om,\h;d\sigma)$ and $\norm{\,\cdot\,}_{\Bo}$ 
and $\ip{\,\cdot}{\cdot\,}_{\Bo}$ will always denote the norm and the inner 
product in $L^2(\Om,\C;d\sigma)$. We assume that $\bo$ is a reproducing kernel 
Hilbert space
(RKHS) and denote by $\Kz$ and $\kz$ the reproducing and the normalized 
reproducing kernels in $\Bo$. That is, for every $g\in\Bo$, and every $z\in\Om$
there holds:
\begin{align*}
g(z)=\int_{\Om}\overline{K_z(w)}g(w)d\sigma(w).
\end{align*}
And for every $f\in\bo$ and $z\in\Om$, there holds:
\begin{align*}
f(z)=\int_{\Om}\overline{K_z(w)}f(w)d\sigma(w)
=\int_{\Om}\ip{K_w}{K_z}_{\Bo}f(w)d\sigma(w).
\end{align*}
To emphasize, the reproducing kernels $K_z$ are $\C$--valued analytic functions
in $\Bo$ and they act as reproducing kernels on both spaces $\bo$ and $\Bo$.
We will also assume that $\norm{K_z}_{\Bo}$  is continuous 
as a function of $z$ taking $(\Om, \m)$ into $\R$.
\end{itemize}

We will say that $\bo$ is an \textit{$\h$--valued 
Bergman-type space} if in addition to 
\hyperref[A1]{A.1}-\hyperref[A3]{A.3} it also satisfies the following 
properties.

\begin{itemize}
\item[\label{A4} A.4] We assume that the measure 
$d\la(z):=\norm{K_z}_{\Bo}^2d\sigma(z)$ is quasi-invariant under all $\vp_z$, 
i.e., for every Borel set $E\subset\Om$  we have $\la(E)\simeq \la(\vp_z(E))$ 
with the implied constants independent of $z\in\Om$. In addition, we assume that
$\la$ is doubling, i.e., there exists a constant $C>1$ such that for all 
$z\in\Om$ and $r>0$ we have $\la(D(z,2r))\leq C\la(D(z,r))$.

\item[\label{A5} A.5] We assume that 
$$\abs{\ip{k_z}{k_w}_{\Bo}}\simeq \frac{1}{\norm{K_{\vp_z(w)}}_{\Bo}},$$ 
with the implied constants independent of $z, w\in\Om$. 

\item[\label{A6} A.6] We assume that there exists a positive constant  
$\kappa<2$ such that 
\begin{equation} \label{propA6}
\int_{\Om}{\frac{\abs{\ip{K_z}{K_w}_{\Bo}}^{\frac{r+s}{2}}}
{\norm{K_z}_{\Bo}^s\norm{K_w}_{\Bo}^r}\,d\la(w)}\leq C = C(r,s) 
< \infty, \; \; \forall z \in \Om 
\end{equation}
for all $r>\kappa>s>0$ or that~\eqref{propA6} holds for all $r=s>0$.   In the 
latter case we will say that $\kappa =0$. These will be called the Rudin-Forelli
estimates for $\Bo$.

\item[\label{A7} A.7] We assume that $\lim_{\m(z,0)\to\infty}
\norm{K_z}_{\Bo}=\infty$.

\end{itemize}

We say that $\bo$ is a \textit{strong $\h$--valued 
Bergman-type space} if we have $=$ instead of $\simeq$ everywhere in 
\hyperref[A1]{A.1}-\hyperref[A5]{A.5}. 

\subsection{Some Examples}
The classical Bergman spaces on the unit ball, polydisc, or over
any bounded symmetric domain that satisfies the Rudin--Forelli estimates are
all examples of scalar--valued Bergman--type spaces. It should be 
pointed out that in classical Bergman spaces on the ball, 
the invariant measure of \hyperref[A4]{A.4} is not strictly
doubling. However, the only place where the doubling property 
is used is in geometric decomposition of
$\Om$ in Proposition \ref{Covering}. However, results of this type
are well known for the classical Bergman spaces on the ball.
See for example \cites{MW2,CR,Sua,MSW,BI}.

Additionally, the classical
Fock space is a scalar--valued Bergman--type space. For a more 
detailed discussion of examples of Bergman--type spaces, 
see \cite{MW2}. Clearly, any Bergman--type space can be extended to a 
$\h$--valued Bergman space and so the $\h$--valued versions of these
spaces are $\h$--valued Bergman--type spaces.

\subsection{Classical Results Extended to the $\h$--Valued Setting}
Before going on, we discuss notation. If $\mathcal{X}$ and $\mathcal{Y}$ are
Banach spaces, $\mathcal{L}(\mathcal{X},\mathcal{Y})$ is the space of bounded
linear operators from $\mathcal{X}$ to $\mathcal{Y}$ equiped with the
usual operator norm. When $\mathcal{X}=
\mathcal{Y}$ we will write $\mathcal{L}(\mathcal{X},\mathcal{Y})=
\mathcal{L}(\mathcal{X})$. 

The symbols $\norm{\cdot}$ and $\ip{\cdot}{\cdot}$ will
be used in several different ways throughout the paper. To make things clear,
we will adorn these symbols with a subscript to indicate the space in which the
norm or inner product is being taken.

For the rest of the paper, let $\{e_k\}_{k=1}^{\infty}$ denote the standard 
orthonormal basis for $\h$. If $v$ is an element of $\h$, then $v_k$ will 
denote the $k^{th}$ component of $v$. That is $v_k=\ip{v}{e_k}_{\h}$.
Similarly, if $f$ is an $\h$--valued function, $f_k$ will denote the 
$k^{th}$ component function. That is, $f_k(z)=\ip{f(z)}{e_k}_{\h}$.

The identity operator on $\h$ will be denoted by $I$. 
In addition, if $d\in\N$, $I^{(d)}$ is the operator that is the 
orthogonal projection onto the span of $\{e_1,\cdots,e_d\}$. That is, 
$I^{(d)}$ is the identity matrix with the first $d$ entries on the diagonal 
set equal to $1$ and all other entries set to $0$. Also, $I_{(d)}$ will be the
``opposite'' of $I^{(d)}$. That is, $I_{(d)}=I-I^{(d)}$. 

If $e\in\h$, we say that $e$ is $d$--finite if there is a $d\in\N$ such that
$e=I^{(d)}e$. That is, only the first $d$ entries of $e$ may be non--zero.
An operator $U\in\l(\h)$ will be 
called $d$--finite if there is a $d\in\N$ such that 
$U=I^{(d)}UI^{(d)}$. Equivalently, $\ip{Ue_i}{e_k}_{\h}=0$ if either 
$i>d$ or $k>d$. An $\h$--valued function $f:\Om\to\h$ will be called 
$d$--finite if $f(z)$ is $d$--finite for all $z\in\Om$. A matrix--valued
function $U:\Om\to\l(\h)$ will be called $d$--finite if $U(z)$ is a 
$d$--finite operator on $\h$ for every $z\in\Om$. If the exact value
of $d$ is not important, we will simply say ``finite'' instead of 
$d$--finite. For example, a vector $u\in\h$ is finite if there is a
$d$ such that $u$ is $d$--finite.

We will often refer to linear operators on $\h$ (not neccessairily bounded)
as matricies. There should be no confusion that these 
matricies are infinite dimensional matricies and are written relative 
to the standard orthonormal basis $\{e_k\}_{k=1}^{\infty}$.

Let $\mathcal{C}=\{f_{a}\}_{a\in\mathcal{A}}$ be a collection of $\C$--valued 
functions. A linear combination of functions in the collection 
$\{f_{a}\}_{a\in\mathcal{A}}$ is a sum of the form:
\begin{align}\label{lincom}
f_1h_1 + \cdots + f_mh_m,
\end{align}
where each $f_i\in\mathcal{C}$, each $h_i$ is a finite element of $\h$ and
$m<\infty$. A $\C$--linear combination of functions in the collection
is a sum of the form:
\begin{align*}
f_1c_1 + \cdots +f_mc_m,
\end{align*}
where the $c_i$ are complex numbers. To reiterate, whenever we say 
``linear combination'', we will mean one as defined in \eqref{lincom} so that
a linear combination of scalar--valued functions is an $\h$--valued function.

\begin{lm}
Let $\mathcal{C}$ be a collection of $\C$--valued functions such that the
set of $\C$--linear combinations of functions in $\mathcal{C}$ is dense in
$\Bo$. Then the linear combinations of elements of $\mathcal{C}$ is dense
in $\bo$.
\end{lm}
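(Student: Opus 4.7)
The plan is a straightforward two‑step approximation: first truncate the $\h$-valued function to finitely many coordinates, then approximate each coordinate separately using the scalar density hypothesis.

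Fix $f\in\bo$ and $\varepsilon>0$. Write $f=\sum_{k=1}^{\infty}f_k\,e_k$ with $f_k(z)=\ip{f(z)}{e_k}_{\h}$. By the definition of $\h$-valued analyticity in \hyperref[A3]{A.3}, each $f_k$ is analytic, and
\begin{align*}
\norm{f_k}_{\Bo}^{2}=\int_{\Om}\abs{f_k}^{2}\,d\sigma \le \int_{\Om}\norm{f}_{\h}^{2}\,d\sigma=\norm{f}_{\bo}^{2}<\infty,
\end{align*}
so each $f_k\in\Bo$, and moreover $\sum_{k=1}^{\infty}\norm{f_k}_{\Bo}^{2}=\norm{f}_{\bo}^{2}$. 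Hence I can choose $d\in\N$ so that the truncation $f^{(d)}:=\sum_{k=1}^{d}f_k\,e_k$ satisfies $\norm{f-f^{(d)}}_{\bo}^{2}=\sum_{k>d}\norm{f_k}_{\Bo}^{2}<\varepsilon^{2}/4$.

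For the second step, apply the scalar density hypothesis to each of the finitely many $f_k$ with $1\le k\le d$: there exist $\C$-linear combinations
\begin{align*}
g_k=\sum_{j=1}^{m_k}c_{k,j}\,\vf_{k,j},\qquad \vf_{k,j}\in\mathcal{C},\ c_{k,j}\in\C,
\end{align*}
with $\norm{g_k-f_k}_{\Bo}<\varepsilon/(2\sqrt{d})$. Setting $g:=\sum_{k=1}^{d}g_k\,e_k=\sum_{k=1}^{d}\sum_{j=1}^{m_k}\vf_{k,j}\,(c_{k,j}\,e_k)$ exhibits $g$ as a linear combination in the sense of \eqref{lincom}, since each coefficient $c_{k,j}\,e_k$ is a finite (indeed $d$-finite) element of $\h$. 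Then
\begin{align*}
\norm{g-f^{(d)}}_{\bo}^{2}=\sum_{k=1}^{d}\norm{g_k-f_k}_{\Bo}^{2}<d\cdot\frac{\varepsilon^{2}}{4d}=\frac{\varepsilon^{2}}{4},
\end{align*}
and combining with the truncation estimate via the triangle inequality gives $\norm{g-f}_{\bo}<\varepsilon$, as required.

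There is really no obstacle of substance here; the only point to verify is that the coordinate functions $f_k$ of an element of $\bo$ actually sit in the scalar space $\Bo$, which is what the Parseval identity for the $\h$-norm provides together with the analyticity clause of \hyperref[A3]{A.3}. Everything else is bookkeeping: a finite-rank truncation plus scalar density, assembled coordinate by coordinate.
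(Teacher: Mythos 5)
Your proof is correct and follows essentially the same two-step strategy as the paper's: truncate $f$ to finitely many coordinates, then approximate each scalar coordinate function $f_k\in\Bo$ by a $\C$-linear combination from $\mathcal{C}$ and reassemble. You spell out the coordinate-by-coordinate assembly and the membership $f_k\in\Bo$ a bit more explicitly than the paper does, but the argument is the same.
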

\begin{proof}
First, let $g\in\bo$ be finite. Then since the $\C$--linear combinations of
elements of
$\mathcal{C}$ are dense in $\Bo$, we can approximate $g$ in the 
$\bo$ norm with linear combinations of elements of $\mathcal{C}$.
Let $f\in\bo$ be arbitary. Then 
$\norm{f}_{\bo}^{2}=\sum_{k}\norm{\ip{f}{e_k}_{\h}}^{2}_{\Bo}<\infty$. 
Thus there is an $N\in\N$ such that 
$\sum_{k=N}^{\infty}\norm{\ip{f}{e_k}_{\h}}^2_{\Bo}<\epsilon$ and so
$\norm{f-\sum_{k=1}^{N-1}\ip{f}{e_k}}_{\bo}<\epsilon$. That is, 
$f$ can be approximated by finite elements of $\bo$ in the $\bo$ norm. 
Let $g$ be a finite element
of $\bo$ such that $\norm{f-g}_{\bo}\leq \epsilon$ and let $h$ be a linear 
combination of elements of $\mathcal{C}$ such that $\norm{g-h}_{\bo}\leq\epsilon$.
Then there holds:
\begin{align*}
\norm{f-h}_{\bo}\leq\norm{f-g}_{\bo}+\norm{h-g}_{\bo} \leq 2\epsilon.
\end{align*}
This completes the proof. 
\end{proof}
\noindent This implies the following corollary:
\begin{cor}
The linear combinations of the normalized reproducing kernels, reproducing 
kernels, and monomials are all dense in $\bo$.
\end{cor}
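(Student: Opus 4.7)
The plan is to apply the preceding lemma three times, taking $\mathcal{C}$ to be (i) the collection of normalized reproducing kernels $\{k_z\}_{z\in\Om}$, (ii) the collection of reproducing kernels $\{K_z\}_{z\in\Om}$, and (iii) the collection of monomials on $\Om$. In each case it suffices to check that the $\C$-linear combinations of $\mathcal{C}$ are dense in the scalar-valued space $\Bo$; the lemma then promotes this to density of (our definition of) linear combinations in the $\h$-valued space $\bo$.

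For (ii), I would use that $\Bo$ is an RKHS: if $g\in\Bo$ is orthogonal to every $K_z$, then $g(z)=\ip{g}{K_z}_{\Bo}=0$ for all $z\in\Om$, so $g\equiv 0$; by a standard Hilbert-space argument this gives density of the $\C$-linear span of $\{K_z\}_{z\in\Om}$. Statement (i) is then immediate since each $k_z$ is a nonzero scalar multiple of $K_z$, so the two collections have the same linear span in $\Bo$. For (iii), the density of polynomials in the scalar-valued Bergman-type space $\Bo$ is part of the underlying classical framework imported from \cite{MW2} (it is verified there for the weighted Bergman spaces on the ball, polydisc, bounded symmetric domains, and Fock spaces, which are the motivating examples), so we may take it as given.

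The only real step is the invocation of the previous lemma; the main conceptual point to be careful about is that ``linear combination'' here, as defined in \eqref{lincom}, means sums of the form $f_1 h_1 + \cdots + f_m h_m$ with scalar $f_i\in\mathcal{C}$ and finite vectors $h_i\in\h$, which is precisely the hypothesis under which the lemma applies. I do not anticipate a hard step: the lemma has already done the work of converting $\C$-density in $\Bo$ into $\h$-valued density in $\bo$, and each of the three scalar-density statements is either elementary (reproducing kernels, normalized reproducing kernels) or a standing assumption on the class of Bergman-type spaces considered (monomials).
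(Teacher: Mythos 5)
Your proposal is correct and matches the paper's (implicit) approach: the paper simply states that the corollary follows from the preceding lemma, and you apply that lemma to each of the three collections after checking that the corresponding $\C$-linear spans are dense in the scalar space $\Bo$. Your verification of the scalar-density hypotheses — the RKHS orthogonality argument for $\{K_z\}$, the observation that each $k_z$ is a nonzero scalar multiple of $K_z$, and the appeal to the underlying framework of \cite{MW2} for monomials — is sound and slightly more explicit than the paper, which leaves these points to the reader.
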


\subsection{Projection Operators on Bergman-type Spaces} 
It is easy to see that the orthogonal projection of $L^2(\Om,\h;d\sigma)$ onto 
$\bo$ is given by the integral operator
$$
P(f)(z):=\int_{\Om}\ip{\Kbw}{\Kbz}_{\Bo}f(w)d\sigma(w).
$$
Therefore, for all $f\in\bo$ we have 
$f(z)=\int_{\Om}\kbw(z)\ip{f}{\kbw}\,d\lambda(w).$
Moreover, 
$$
\norm{f}_{\bo}^2
=\int_{\Om}\ip{f(w)}{f(w)}_{\h}d\sigma(w)
=\int_{\Om}\sum_{j=1}^{\infty}\abs{\ip{f_j}{k_w}_{\h}}^2d\la(w).
$$

If $\kappa>0$, $P$ is bounded as an operator on 
$L^p(\Om,\ell^p;d\sigma)$ for $1<p<\infty$ and if $\kappa=0$, $P$ is bounded as an 
operator on $L^p\left(\Om,\ell^p;\frac{d\sigma(w)}{\norm{K_w}^p}_{\Bo}\right)$. In 
\cite{MW2}, 
the authors prove:
\begin{lm}\label{projs} Let $P(f)(z)
:=\int_{\Om}\ip{\Kbw}{\Kbz}_{\Bo}f(w)d\sigma(w)$ 
be the projection operator on $L^{p}(\Om,\C;d\sigma)$.
\begin{enumerate}[\textnormal{(}a\textnormal{)}]
\item If $\kappa=0$ then $P$ is bounded as an operator from 
$L^p\left(\Om,\C;\frac{d\lambda(w)}{\norm{K_w}_{\Bo}^p}\right)$ into 
$L^p\left(\Om,\C;\frac{d\lambda(w)}{\norm{K_w}_{\Bo}^p}\right)$ for all 
$1\leq p\leq \infty$. 
\item If $\kappa>0$ then $P$ is bounded as an operator from 
$L^p(\Om,\C;d\sigma)$ 
into $L^p(\Om,\C;d\sigma)$ for all $1<p<\infty$
\end{enumerate}
\end{lm}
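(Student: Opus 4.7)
The plan is to apply Schur's test for integral operators to $P$, whose kernel is $\langle K_w,K_z\rangle_{\Bo}$. Since $|Pf|\le \widetilde P(|f|)$ with $\widetilde P$ the positive-kernel operator with kernel $|\langle K_w,K_z\rangle_{\Bo}|$, it suffices to prove $L^p$-boundedness of $\widetilde P$. The two Schur integral inequalities will be supplied directly by the Rudin--Forelli estimates of \hyperref[A6]{A.6}; the only real choice is the Schur weight, which depends on the ambient measure in each case.

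Consider first case (b), where $\kappa>0$ and the measure is $d\sigma$. I would use the weight $h(w)=\|K_w\|_{\Bo}^{2b}$ for a real parameter $b>0$ to be chosen. Writing $d\sigma=d\lambda/\|K_w\|_{\Bo}^{2}$, the first Schur inequality transforms into
\[
\int_{\Om}\frac{|\langle K_w,K_z\rangle_{\Bo}|}{\|K_w\|_{\Bo}^{\,2-2bp'}}\,d\lambda(w)\;\lesssim\;\|K_z\|_{\Bo}^{\,2bp'},
\]
and the second is its twin with $p$ and $p'$ swapped. Each of these is an instance of \hyperref[A6]{A.6} with $(r+s)/2=1$, taking $s=2bp'$, respectively $s=2bp$. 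The admissibility $0<s<\kappa<r$ together with $r+s=2$ becomes $0<2bp,\,2bp'<\min(\kappa,2-\kappa)$, and since $0<\kappa<2$ any $b$ with $2b\max(p,p')<\min(\kappa,2-\kappa)$ works. Schur's test then gives boundedness on $L^p(\Om,\C;d\sigma)$.

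Case (a) ($\kappa=0$) is handled analogously, now against the measure $d\mu(w)=d\lambda(w)/\|K_w\|_{\Bo}^{p}$. Rewriting $d\sigma=\|K_w\|_{\Bo}^{p-2}\,d\mu$ absorbs the extra factors, and for $1<p<\infty$ the weight $h(w)=\|K_w\|_{\Bo}^{2b}$ with $b=1/(2p')$ causes both Schur inequalities to collapse to the diagonal case $r=s=1$ of \hyperref[A6]{A.6}, which is exactly what case (a) provides for all $r=s>0$. The endpoints are even easier: for $p=1$, Fubini reduces the claim to bounding
\[
\sup_{w\in\Om}\frac{1}{\|K_w\|_{\Bo}}\int_{\Om}\frac{|\langle K_w,K_z\rangle_{\Bo}|}{\|K_z\|_{\Bo}}\,d\lambda(z)<\infty,
\]
which is \hyperref[A6]{A.6} with $r=s=1$; the case $p=\infty$ follows by the symmetric pointwise estimate, or by duality from $p=1$.

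The one place that requires care is the simultaneous choice of the parameter $b$ in case (b): both Schur integrals must satisfy the Rudin--Forelli admissibility $0<s<\kappa<r$ with $r+s=2$ at the \emph{same} $b$, and the fact that this is possible is essentially the content of the hypothesis $\kappa\in(0,2)$. Everything else is routine Schur-test bookkeeping, identical in shape to the scalar Bergman-space argument carried out in \cite{MW2}.
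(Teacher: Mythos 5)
Your argument is correct and is exactly the standard Schur--test-plus-Rudin--Forelli proof that the paper delegates to \cite{MW2}: the identity $d\lambda=\norm{K_w}_{\Bo}^2\,d\sigma$, the weight $h(w)=\norm{K_w}_{\Bo}^{2b}$ with the admissibility window $2b\max(p,p')<\min(\kappa,2-\kappa)$ in case (b), and the degenerate choice $b=1/(2p')$ with $r=s=1$ (plus Fubini at $p=1$ and the pointwise weighted bound at $p=\infty$) in case (a) are all precisely the bookkeeping the cited proof carries out. No gaps.
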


The proof of the following lemma is easily deduced
from Lemma \ref{projs} and is omitted. 

\begin{lm}\label{proj} Let $P(f)(z)
:=\int_{\Om}\ip{\Kbw}{\Kbz}_{\Bo}f(w)d\sigma(w)$ 
be the projection operator.
\begin{enumerate}[\textnormal{(}a\textnormal{)}]
\item If $\kappa=0$ then $P$ is bounded as an operator from 
$L^p\left(\Om,\ell^p;\frac{d\lambda(w)}{\norm{K_w}_{\Bo}^p}\right)$ into 
$L^p\left(\Om,\ell^p;\frac{d\lambda(w)}{\norm{K_w}_{\Bo}^p}\right)$ for all 
$1\leq p\leq \infty$. 
\item If $\kappa>0$ then $P$ is bounded as an operator from 
$L^p(\Om,\ell^p;d\sigma)$ 
into $L^p\left(\Om,\ell^p;d\sigma\right)$ for all $1<p<\infty$
\end{enumerate}
\end{lm}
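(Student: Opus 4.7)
The plan is to reduce to the scalar case already proved in Lemma~\ref{projs} by applying the projection componentwise with respect to the fixed orthonormal basis $\{e_k\}_{k=1}^{\infty}$. The key observation is that the integral kernel of $P$, namely $\ip{K_w}{K_z}_{\Bo}$, is $\C$-valued, so by the very definition of the $\h$-valued integral given in \hyperref[A3]{A.3}, the $k$-th coefficient of $P(f)(z)$ is exactly $P(f_k)(z)$, where $f_k(w) := \ip{f(w)}{e_k}_{\h}$. That is,
\[
\ip{P(f)(z)}{e_k}_{\h} = \int_\Om \ip{\Kbw}{\Kbz}_{\Bo} f_k(w)\,d\sigma(w) = P(f_k)(z).
\]
This is the one step that uses anything specific to the vector-valued setting, and it is essentially a restatement of how vector-valued integration was set up in the preamble.

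Given this componentwise identification, I would use Tonelli's theorem to write
\[
\norm{P(f)}_{L^p(\Om,\ell^p;d\sigma)}^p
= \int_\Om \sum_{k=1}^{\infty} \abs{P(f_k)(z)}^p d\sigma(z)
= \sum_{k=1}^{\infty} \norm{P(f_k)}_{L^p(\Om,\C;d\sigma)}^p,
\]
and likewise for the weighted measure $d\lambda(w)/\norm{K_w}_{\Bo}^p$ in the $\kappa=0$ case. Then in case (b), Lemma~\ref{projs}(b) gives a constant $C=C(p)$, independent of $k$, such that $\norm{P(f_k)}_{L^p(\Om,\C;d\sigma)}^p \leq C^p \norm{f_k}_{L^p(\Om,\C;d\sigma)}^p$ for $1<p<\infty$, so summing over $k$ and using the definition of the $L^p(\Om,\ell^p;d\sigma)$ norm yields
\[
\norm{P(f)}_{L^p(\Om,\ell^p;d\sigma)}^p
\leq C^p \sum_{k=1}^{\infty} \norm{f_k}_{L^p(\Om,\C;d\sigma)}^p
= C^p \norm{f}_{L^p(\Om,\ell^p;d\sigma)}^p.
\]
Case (a) is entirely analogous, simply replacing $d\sigma$ by $d\lambda/\norm{K_w}_{\Bo}^p$ throughout and invoking Lemma~\ref{projs}(a), which also covers the endpoints $p=1$ and $p=\infty$; at $p=\infty$ the roles of sum and supremum are adjusted in the obvious way using $\ell^\infty$.

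There is essentially no obstacle here: the entire argument is a bookkeeping exercise in swapping an integral with a sum, which is justified by the positivity of the integrand in Tonelli. The only minor subtlety worth writing out carefully is verifying the componentwise reduction, since it relies on the definition $\int_E h\,d\mu = \sum_k (\int_E \ip{h}{e_k}_\h d\mu)\, e_k$ being applicable to $h(w) = \ip{\Kbw}{\Kbz}_{\Bo} f(w)$, and this follows from $\sum_k \abs{\int_\Om \ip{\Kbw}{\Kbz}_{\Bo} f_k(w) d\sigma(w)}^2 < \infty$, which itself follows from the scalar boundedness of $P$ applied to each $f_k$. This is why the author says the proof is ``easily deduced'' and omits it.
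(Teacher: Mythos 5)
Your proof is correct and is exactly the argument the paper has in mind — the paper omits the proof, saying only that it is ``easily deduced'' from Lemma~\ref{projs}, and the componentwise reduction exploiting the fact that the kernel $\ip{K_w}{K_z}_{\Bo}$ is $\C$-valued, followed by Tonelli and the definition of the $L^p(\Om,\ell^p;\mu)$ norm, is precisely that deduction. The only minor slip is in your closing remark: the displayed definition $\int_E h\,d\mu = \sum_k(\int_E\ip{h}{e_k}_{\h}d\mu)e_k$ with its $\ell^2$-summability requirement is the paper's $\h$-valued integral, whereas for the $\ell^p$ spaces appearing in this lemma the relevant well-definedness is $\sum_k\abs{P(f_k)(z)}^p<\infty$ for a.e.\ $z$, which follows directly from your Tonelli computation rather than from any $\ell^2$ statement; this does not affect the validity of the argument.
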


The following is a matrix version of the classical Schur's Test. 
\begin{lm}[Schur's Test for Matrix--Valued Kernels]
\label{MSchur}
Let $(X,\mu)$ and $(X,\nu)$ be measure spaces and $M(x,y)$ a
measureable matrix--valued function on $X\times X$ whose entries are 
non--negative. That is, for all $k,i\in\N$ there holds:
\begin{align*}
\ip{M(x,y)e_k}{e_i}_{\h}\geq 0.
\end{align*}
If $h$ is a positive measureable function (with respect to
$\mu$ and $\nu$), and if $C_1,C_2$ are positive constants such that
\begin{align*}
\int_{X}\sum_{k=1}^{\infty}h(y)^{q}\ip{M(x,y)e_k}{e_i}_{\h}d\nu(y)
\leq C_1h(x)^q
\textnormal{ for } \mu\textnormal{-almost every } x;
\\
\int_{X}\sum_{i=1}^{\infty}h(x)^{p}\ip{M^*(x,y)e_i}{e_k}_{\h}d\mu(x)
\leq C_2h(y)^p
\textnormal{ for } \nu\textnormal{-almost every } y,
\end{align*}
then $Tf(x)=\int_{X}M(x,y)f(y)d\nu(y)$ defines a bounded operator 
 $T:L^{p}(X,\ell^p;\nu)\to L^{p}(X,\ell^p;\mu)$ with norm no greater than
than $C_1^{1/q}C_2^{1/p}$.
\end{lm}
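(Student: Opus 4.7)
The plan is to mimic the classical scalar Schur test, with all the indices of $\h$ treated as another integration variable against counting measure. Let me write $m_{ik}(x,y) := \ip{M(x,y)e_k}{e_i}_{\h} \geq 0$, so that
\[
(Tf(x))_i = \sum_{k=1}^\infty \int_X m_{ik}(x,y)\, f_k(y)\, d\nu(y),
\]
and observe (since the $m_{ik}$ are non-negative reals) that $\ip{M^*(x,y)e_i}{e_k}_{\h} = \overline{m_{ik}(x,y)} = m_{ik}(x,y)$, so the two hypotheses become
\[
\sum_{k} \int_X m_{ik}(x,y)\, h(y)^q\, d\nu(y) \leq C_1 h(x)^q \text{ for every } i,
\]
\[
\sum_{i} \int_X m_{ik}(x,y)\, h(x)^p\, d\mu(x) \leq C_2 h(y)^p \text{ for every } k.
\]

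First I would bound $|(Tf(x))_i|$ by bringing the absolute value inside and splitting the kernel as $m_{ik}^{1/q}(x,y)\cdot h(y) \cdot m_{ik}^{1/p}(x,y)\cdot h(y)^{-1}|f_k(y)|$. Applying Hölder's inequality with exponents $q$ and $p$ on the product measure (counting measure on $k$ times $d\nu$ on $X$) gives
\[
|(Tf(x))_i| \leq \Bigl(\sum_k \int m_{ik}(x,y) h(y)^q d\nu(y)\Bigr)^{1/q}\!\Bigl(\sum_k \int m_{ik}(x,y) \frac{|f_k(y)|^p}{h(y)^p} d\nu(y)\Bigr)^{1/p}.
\]
The first factor is at most $C_1^{1/q} h(x)$ by the first hypothesis. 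Raising to the $p$-th power and summing over $i$,
\[
\sum_i |(Tf(x))_i|^p \leq C_1^{p/q} h(x)^p \sum_{i,k}\int m_{ik}(x,y) \frac{|f_k(y)|^p}{h(y)^p} d\nu(y).
\]

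Next I would integrate over $x$ against $d\mu$ and apply Tonelli (legitimate since everything is non-negative) to swap the $x$-integral with the $y$-integral and the sum in $k$:
\[
\|Tf\|_{L^p(X,\ell^p;\mu)}^p \leq C_1^{p/q}\sum_k \int \frac{|f_k(y)|^p}{h(y)^p}\Bigl(\sum_i \int h(x)^p m_{ik}(x,y) d\mu(x)\Bigr) d\nu(y).
\]
The inner expression is bounded by $C_2 h(y)^p$ by the second hypothesis, so the $h(y)^p$ factors cancel and I get $\|Tf\|^p \leq C_1^{p/q} C_2\, \|f\|_{L^p(X,\ell^p;\nu)}^p$, yielding the claimed operator norm bound $C_1^{1/q} C_2^{1/p}$.

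The only non-routine point is ensuring that the Hölder step is executed on the correct product measure (the sum on $k$ paired with the integral in $y$), since that is precisely what introduces both indices into a single exponent; once that bookkeeping is right, non-negativity of the matrix entries makes every Tonelli/Fubini swap automatic, and the rest mirrors the scalar Schur test exactly.
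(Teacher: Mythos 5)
Your proof is correct and follows essentially the same route as the paper's: split the kernel via $h(y)\cdot h(y)^{-1}$, apply Hölder to separate into a factor controlled by the first hypothesis, raise to the $p$-th power and sum over $i$, then use Tonelli and the second hypothesis. The only cosmetic difference is that you apply Hölder once on the product measure (counting measure on $k$ times $d\nu$), whereas the paper performs two sequential applications (first over the sum in $k$, then over the integral in $y$); these are equivalent, and your explicit observation that $\ip{M^*(x,y)e_i}{e_k}_{\h}=m_{ik}(x,y)$ for real nonnegative entries is a helpful clarification the paper leaves implicit.
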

\begin{proof}
The proof is simply an appropriate adaptation of a standard proof for the
classical Schur's Test. 
The following computation 
uses H\"{o}lder's Inequality at the level of the integral and at 
the level of the infinite sum, we also use the first assumption:
\begin{align*}
\abs{(Tf_i)(x)}
&=\abs{\ip{Tf(x)}{e_i}_{\h}}
\\&\leq\int_{X}\sum_{k=1}^{\infty}h(y)h(y)^{-1}
    \abs{f_k(y)}\ip{M(x,y)e_k}{e_i}_{\h}d\nu(y)
\\&\leq\int_{X}
    \left\{\sum_{k=1}^{\infty}h^{q}(y)
    \ip{M(x,y)e_k}{e_i}_{\h}\right\}^{\frac{1}{q}}
    \left\{\sum_{k=1}^{\infty}h^{-p}(y)\abs{f_k(y)}^{p}
    \ip{M(x,y)e_k}{e_i}_{\h}\right\}^{\frac{1}{p}}d\nu(y)
\\&\leq\left\{\int_{X}\sum_{k=1}^{\infty}h(y)^{q}
    \ip{M(x,y)e_k}{e_i}_{\h}d\nu(y)\right\}^{\frac{1}{q}}
    \left\{\int_{X}\sum_{k=1}^{\infty}h^{-p}(y)\abs{f_k(y)}^{p}
    \ip{M(x,y)e_k}{e_i}_{\h}d\nu(y)\right\}^{\frac{1}{p}}
\\&\leq C_1^{\frac{1}{q}}h(x)\left\{\sum_{k=1}^{\infty}\int_{X}h^{-p}(y)\abs{f_k(y)}^{p}
    \ip{M(x,y)e_k}{e_i}_{\h}d\nu(y)\right\}^{\frac{1}{p}}.
\end{align*}
Using the above estimate and the second assumption, there holds:
\begin{align*}
\norm{Tf}_{L^{p}(X,\ell^p;\mu)}^p 
&=\int\sum_{i=1}^{\infty}\abs{\ip{Tf(x)}{e_i}_{\h}}^{p}d\mu(x)
\\&\leq \int_{X}\sum_{i=1}^{\infty}\left\{C_1^{\frac{1}{q}}h(x)
    \left(\int_{X}\sum_{k=1}^{\infty}h^{-p}(y)
    \abs{f_k(y)}^{p}\ip{M(x,y)e_k}{e_i}_{\h}d\nu(y)
    \right)^{\frac{1}{p}}\right\}^{p}d\mu(x)
\\&=C_1^{\frac{p}{q}}\int_{X}\sum_{k=1}^{\infty}\abs{f_k(y)}^{p}h^{-p}(y)
    \int_{X}\sum_{i=1}^{\infty}h^p(x)\ip{M^{*}(x,y)e_i}{e_k}_{\h}d\mu(x)d\nu(y)
\\&\leq C_1^{\frac{p}{q}}C_2\int_{X}\sum_{k=1}^{\infty}\abs{f_k(y)}^{p}d\nu(y)
\\&=C_1^{\frac{p}{q}}C_2\norm{f}_{L^{p}(X,\ell^p;\nu)}^{p}.
\end{align*}
Now take $p^{th}$ roots.
The interchange of integrals and sums and the switching the order of integration
are justified since the integrand is non--negative.
\end{proof}

The following result will be useful later when applying the 
Matrix Schur's Test, Lemma \ref{MSchur}. See \cite{MW2} for
the proof.
\begin{lm}\label{RF} For all $r, s\in\R$ the following quasi-identity holds 
\begin{equation}
\int_{\Om} {\frac{\abs{\ip{K_z}{K_w}_{\Bo}}^{\frac{r-s}{2}}}
    {\norm{K_w}_{\Bo}^r}\,d\la(w)}
\simeq\int_{\Om} {\frac{\abs{\ip{K_z}{K_w}_{\Bo}}^{\frac{r+s}{2}}}
    {\norm{K_z}_{\Bo}^s\norm{K_w}_{\Bo}^r}\,d\la(w)}
\end{equation}
where the implied constants are independent of $z\in\Om$ and may depend on 
$r,s$.
\end{lm}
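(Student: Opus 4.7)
The plan is to transform one side into the other via a single change of variable $w = \vp_z(u)$ followed by two applications of A.5. Since $k_z = K_z/\norm{K_z}_{\Bo}$, property A.5 is equivalent to the two-sided estimate
$$\abs{\ip{K_z}{K_w}_{\Bo}} \simeq \frac{\norm{K_z}_{\Bo}\,\norm{K_w}_{\Bo}}{\norm{K_{\vp_z(w)}}_{\Bo}},$$
with constants independent of $z,w\in\Om$, and this is the only route by which factors of $\norm{K_{\vp_z(\cdot)}}_{\Bo}$ will be introduced or removed.

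Starting from the right-hand side and substituting $w = \vp_z(u)$, A.4 shows that $d\la(w)$ and $d\la(u)$ are quasi-equivalent, so the integral is preserved up to a multiplicative constant. A first application of A.5, together with the involution identity $\vp_z\circ\vp_z = \mathrm{id}$ from A.1, gives $\abs{\ip{K_z}{K_{\vp_z(u)}}_{\Bo}} \simeq \norm{K_z}_{\Bo}\,\norm{K_{\vp_z(u)}}_{\Bo}/\norm{K_u}_{\Bo}$. Substituting into the integrand and collecting exponents produces
$$\int_\Om \frac{\abs{\ip{K_z}{K_w}_{\Bo}}^{(r+s)/2}}{\norm{K_z}_{\Bo}^s\,\norm{K_w}_{\Bo}^r}\,d\la(w) \simeq \norm{K_z}_{\Bo}^{(r-s)/2}\int_\Om \frac{d\la(u)}{\norm{K_{\vp_z(u)}}_{\Bo}^{(r-s)/2}\,\norm{K_u}_{\Bo}^{(r+s)/2}}.$$

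A second application of A.5, in its reciprocal form $1/\norm{K_{\vp_z(u)}}_{\Bo} \simeq \abs{\ip{K_z}{K_u}_{\Bo}}/(\norm{K_z}_{\Bo}\,\norm{K_u}_{\Bo})$, then converts the $\norm{K_{\vp_z(u)}}_{\Bo}$ factor back into $\abs{\ip{K_z}{K_u}_{\Bo}}$ and cancels the surviving $\norm{K_z}_{\Bo}$ powers. What remains is exactly the integrand on the left-hand side, so reading each $\simeq$ in both directions yields the full quasi-identity.

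The only obstacle is bookkeeping the exponents and verifying that the implied constants (from A.4 and from each of the two invocations of A.5) are uniform in $z$; there is no analytic or geometric subtlety beyond this. In particular, neither the Rudin--Forelli estimate A.6 nor the growth condition A.7 is needed, and the argument is insensitive to the sign of $r-s$ or $r+s$, so it covers all $r,s\in\R$ as stated.
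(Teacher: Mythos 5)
Your proof is correct. The paper itself defers to \cite{MW2} for the argument, but your route---substitution $w=\vp_z(u)$ using the quasi-invariance of $\la$ from A.4 together with the involution property from A.1, then two applications of A.5 in the equivalent form $\abs{\ip{K_z}{K_w}_{\Bo}}\simeq \norm{K_z}_{\Bo}\norm{K_w}_{\Bo}/\norm{K_{\vp_z(w)}}_{\Bo}$---is the natural and expected one; the exponent arithmetic checks out, and your observation that a two-sided bound $A\simeq B$ with constants uniform in $z$ is preserved under raising to any real power (positive or negative, with constants merely swapping) is precisely what lets the lemma hold for all $r,s\in\R$ without sign restrictions.
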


\subsection{Translation Operators on Bergman-type Spaces}
 
For each $z\in\Om$ we define an adapted translation operator $U_z$ on  $\bo$ by
$$ 
U_zf(w):=f(\vp_z(w))k_z(w)=\sum_{k=1}^{\infty}
\ip{f\circ\vp_{z}(w)k_{z}(w)}{e_k}_{\h}e_k.
$$
Each $U_z$ is invertible with the inverse given by 
$$ 
U_z^{-1}f(w):=\frac{1}{k_z(\vf_z(w))}f(\vp_z(w))
=\sum_{k=1}^{\infty}\ip{\frac{1}{k_z(\vf_z(w))}f\circ\vp_{z}(w)}{e_k}_{\h}e_k.
$$
The inverse also satisfies $\norm{U^{-1}_zf}_{\bo}\simeq \norm{f}_{\bo}$. 
Therefore, for every $f\in\bo$ there holds
$$ 
\norm{f}_{\bo}^2=\ip{U_z^*f}{U_z^{-1}f}_{\bo}\leq 
\norm{U^*_zf}_{\bo}\norm{U_z^{-1}f}_{\bo}
\lesssim \norm{U_z^*f}_{\bo}\norm{f}_{\bo}.
$$
This implies that also $\norm{U^*_zf}_{\bo}\simeq \norm{f}_{\bo}$. We will also
use the symbols $U_z$ to denote the operators on $\Bo$ given by the 
formula: 
\begin{align*}
U_zh(w)=h(\vp_z(w))k_z(w)
\end{align*}
for every $h\in\Bo$. It will be clear from context which is meant.

\begin{lm} The following quasi-equalities hold for all
$f\in\bo$ and for all $g\in\Bo$:
\begin{itemize}
\item[(a)] $\abs{U_zg}\simeq \abs{g}$,
\item[(b)] $\abs{U_z^2g}\simeq\abs{g}$, 
\item[(c)] $|U_z^*k_w|\simeq|k_{\vp_z(w)}|$. 
\item[(a')] $\norm{U_zf}_{\bo}\simeq \norm{f}_{\bo}$,
\item[(b')] $\norm{U_z^2f}_{\bo}\simeq\norm{f}_{\bo}$, 
\end{itemize}
\end{lm}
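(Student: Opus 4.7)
The plan is to prove the scalar statements (a), (b), (c) (which are essentially the scalar versions from \cite{MW2}) and then deduce (a'), (b') by decomposing $f \in \bo$ along the standard basis $\{e_k\}_{k=1}^\infty$ of $\h$ and summing the scalar estimates over $k$.

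For (a) in the scalar case I would expand
$$
\norm{U_zg}_{\Bo}^2 = \int_{\Om} |g(\vp_z(w))|^2 |k_z(w)|^2 \, d\sigma(w),
$$
rewrite $d\sigma(w)=\norm{K_w}_{\Bo}^{-2}\,d\la(w)$, and apply A.5 in the form $|k_z(w)| \simeq \norm{K_w}_{\Bo}/\norm{K_{\vp_z(w)}}_{\Bo}$ (which follows from $|k_z(w)| = \norm{K_w}_{\Bo}\,|\ip{k_z}{k_w}_{\Bo}|$) to reach
$$
\norm{U_zg}_{\Bo}^2 \simeq \int_{\Om} \frac{|g(\vp_z(w))|^2}{\norm{K_{\vp_z(w)}}_{\Bo}^{2}}\,d\la(w).
$$
The change of variables $v = \vp_z(w)$, legitimate because $\vp_z$ is an involution (A.1) and $d\la$ is $\vp_z$-quasi-invariant (A.4), reduces the right side to a constant multiple of $\int_{\Om} |g(v)|^2\,d\sigma(v) = \norm{g}_{\Bo}^2$. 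Part (b) then follows from (a) applied twice since $U_z^2 g = U_z(U_zg)$.

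For (c) I would identify $U_z^*k_w$ by testing against an arbitrary $h \in \Bo$:
$$
\ip{h}{U_z^*k_w}_{\Bo} = \ip{U_zh}{k_w}_{\Bo} = \frac{h(\vp_z(w))\,k_z(w)}{\norm{K_w}_{\Bo}} = \frac{\norm{K_{\vp_z(w)}}_{\Bo}\,k_z(w)}{\norm{K_w}_{\Bo}}\,\ip{h}{k_{\vp_z(w)}}_{\Bo},
$$
which forces $U_z^*k_w = c(z,w)\,k_{\vp_z(w)}$ with $c(z,w) = \overline{k_z(w)}\,\norm{K_{\vp_z(w)}}_{\Bo}/\norm{K_w}_{\Bo}$. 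The A.5 estimate already used then gives $|c(z,w)| \simeq 1$, hence $|U_z^*k_w| \simeq |k_{\vp_z(w)}|$ as functions on $\Om$.

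For the vector statements (a'), (b') I would exploit that $k_z$ is scalar-valued, so the $k$-th component of $U_zf$ is $U_z f_k$, where $f_k = \ip{f}{e_k}_{\h}$. Parseval in $\h$ then yields
$$
\norm{U_zf}_{\bo}^2 = \sum_{k=1}^\infty \norm{U_zf_k}_{\Bo}^2 \simeq \sum_{k=1}^\infty \norm{f_k}_{\Bo}^2 = \norm{f}_{\bo}^2,
$$
where the $\simeq$ uses scalar (a) with constants independent of $k$ (they depend only on the kernel and metric data for $\Om$). Part (b') is identical with $U_z$ replaced by $U_z^2$. The main obstacle is the scalar change-of-variables step for (a), where A.4 and A.5 must combine correctly; once that is in hand, (b), (c), (a'), (b') reduce to routine iteration, an adjoint computation, and a componentwise summation, respectively.
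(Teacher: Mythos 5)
Your proof is correct and, for parts (a') and (b'), it is exactly the paper's argument: decompose $f$ into scalar components via $(U_zf)_k = U_z f_k$ (valid because $k_z$ is scalar-valued), apply the scalar estimate to each component, and sum with uniform constants. The only divergence is in (a)--(c): the paper simply cites \cite{MW}*{Lemma 2.9} and does not reprove them, whereas you supply the proofs directly from A.1, A.4, and A.5. Your proofs of those scalar facts are the standard arguments (rewrite $d\sigma = \norm{K_w}_{\Bo}^{-2}d\la$, use A.5 in the form $|k_z(w)|\simeq\norm{K_w}_{\Bo}/\norm{K_{\vp_z(w)}}_{\Bo}$, then change variables by the involution using quasi-invariance of $\la$; identify $U_z^*k_w$ by pairing against a test function and bound the resulting scalar by A.5) and all the steps check out, so this is a self-contained strengthening rather than a different route.
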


\begin{proof} 
Assertions (a)-(c) were proven in \cite{MW}*{Lemma 2.9}. We use them to 
prove assertions (a') and (b').
Note that 
\begin{align*}
\norm{f}_{\bo}^{2}=\sum_{k}\norm{\ip{f}{e_k}_{\h}}_{\Bo}^{2}
\end{align*}
and 
\begin{align*}
U_{z}f(w)=\sum_{k}\ip{f\circ\vp_{z}(w)k_{z}(w)}{e_k}_{\h}e_k.
\end{align*}
To prove $(a')$, there holds:
\begin{align*}
\norm{U_zf}_{\bo}^2=\norm{\sum_{k=1}^{\infty}
    \ip{f\circ\vp_{z}(w)k_{z}(w)}{e_k}_{\h}e_k}_{\bo}^2
&=\sum_{k=1}^{\infty}
    \norm{\ip{f\circ\vp_{z}(w)k_{z}(w)}{e_k}_{\h}}_{\Bo}^2
\\&\simeq\sum_{k=1}^{\infty}
    \norm{\ip{f}{e_k}_{\h}}_{\Bo}^2
\\&=\norm{f}_{\bo}^{2}.
\end{align*}
Assertion $(b')$ is proven similarly. 
\end{proof}

In case of a strong $\h$--valued Bergman-type space, the $U_z$ are actually 
unitary
operators. 
Moreover, in this case, $U_z^2=I$ and for $u,w,z\in\Om$ and $e\in\h$, there
holds
\begin{align}\label{trans}
U_z(k_we)(u)=
U_z^*(k_we)(u)=\ip{k_w}{k_w}_{\Bo}\norm{K_{\vp_z(w)}}_{\Bo}k_{\vp_z(w)}(u)e.
\end{align}
Since $\abs{\ip{k_w}{k_w}_{\Bo}}\norm{K_{\vp_z(w)}}_{\Bo}=1$, 
this also implies that $\norm{U_z^*k_we}_{\h}=
\norm{k_{\vp_z(w)}e}_{\h}=\norm{U_z k_we}_{\h}$.

For any given operator $T$ on $\bo$ and $z\in\Om$ we define $T^z:=U_zTU^*_z$. 

\subsection{Toeplitz Operators on $\h$--Valued Bergman-type Spaces}
An operator--valued function $u:\Om\to\l(\h)$ will be called 
measurable (analytic) if the function $z\mapsto \ip{u(z)e_k}{e_i}_{\h}$ is
measurable (analytic) for every $i,k\in\N$.
Let $u:\Om\to\l(\h)$ be measurable. Define $M_{u}$ as the operator on $\bo$ 
given by the formula:
$$
(M_{u}f)(z)=u(z)f(z).
$$
Define the Toeplitz operator with symbol $u$ by:
$$
T_u:=PM_u,
$$ 
where $P$ is the usual projection operator onto $\bo$. Let 
$\lh$ be the set of functions
$u:\Om\to \l(\h)$ such that $w\mapsto \norm{u(w)}_{\l(\h)}$ is in 
$L^{\infty}(\Om,\C;d\sigma)$. When $u\in \lh$, it is immediate to see that 
$\norm{T_u}\leq \norm{u}_{\lh}$. In the next section we will 
provide a condition on $u$ which will guarantee that $T_u$ is bounded. 

We are going to further refine this class of Toeplitz operators. We say 
that the function $u\to\mathcal{L}(\h)$ is in $\lf$ if 
$u$ is finite and $u\in\lh$. In other
words, a function $u\in\lf$ may be viewed as a $d\times d$ matrix--valued
function with bounded entries. 
These Toeplitz operators are the key building blocks of an important object for
this paper, the Toeplitz algebra, denoted by $\mathcal{T}_{\lf}$, associated to
the symbols in $\lf$. Specifically, we define
$$
\mathcal{T}_{\lf}:=\textnormal{clos}_{\mathcal{L}(\bo)}
\left\{\sum_{l=1}^L 
\prod_{j=1}^J T_{u_{j,l}}: u_{j,l}\in \lf, J, L \textnormal{ finite}\right\}
$$
where the closure is taken in the operator norm topology on 
$\mathcal{L}(\bo)$.

In the case of strong $\h$--valued Bergman-type spaces, conjugation by 
translations behaves particularly well with respect to Toeplitz operators. 
Namely, if $T=T_u$ is a Toeplitz operator then $T_u^z=T_{u\circ \varphi_{z}}$. 
Moreover, when  $T=T_{u_1}T_{u_2}\cdots T_{u_n}$ is a product of Toeplitz 
operators there holds
$$ T^z=T_{u_1\circ \varphi_{z}}T_{u_2\circ \varphi_{z}}
\cdots T_{u_n\circ \varphi_{z}}.
$$
 
The following lemma is easily deduced from \cite{MW2}*{Lemma 2.10} and
will be used in what follows.

\begin{lm}\label{ToeplitzCompact}
For each bounded Borel set $G$ in $\Om$, and each $d\in\N$, the Toeplitz 
operator $T_{1_G}\ml=\ml T_{1_G}$ is compact on $\bo$.
\end{lm}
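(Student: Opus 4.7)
The plan is to reduce the $\h$-valued statement to its scalar counterpart, which by hypothesis is already available from \cite{MW2}*{Lemma 2.10}: the scalar Toeplitz operator $T_{1_G}^{\mathrm{sc}}$ with symbol $1_G$ is compact on $\Bo$. The key observation is that $\ml = M_{I^{(d)}}$ projects the $\h$-values onto the finite-dimensional subspace spanned by $e_1,\dots,e_d$, so the operator in question is essentially $d$ independent copies of the scalar Toeplitz operator acting on the component functions.

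First I would verify the equality $T_{1_G}\ml = \ml T_{1_G}$. Because $1_G$ is $\C$-valued and the operator $I^{(d)}$ on $\h$ is constant in $z$, the multiplication operators $M_{1_G}$ and $\ml$ commute on $L^2(\Om,\h;d\sigma)$. The projection $P$ commutes with $\ml$ as well, since $P$ acts component-wise with respect to the standard basis of $\h$: indeed, for $f\in L^2(\Om,\h;d\sigma)$ one has $(Pf)_k = P(f_k)$, where the $P$ on the right is the scalar projection onto $\Bo$. Composing, $T_{1_G}\ml = PM_{1_G}\ml = \ml PM_{1_G} = \ml T_{1_G}$.

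Next I would identify the range of $\ml$ inside $\bo$ with the orthogonal direct sum $\bigoplus_{k=1}^d \Bo$ via the isometry $f\mapsto (f_1,\dots,f_d)$ (using the identity $\norm{f}_{\bo}^2=\sum_k\norm{f_k}_{\Bo}^2$ noted in \hyperref[A3]{A.3}). Under this identification, for $f\in\bo$ the vector $T_{1_G}\ml f$ corresponds to $\bigl(T_{1_G}^{\mathrm{sc}}f_1,\dots,T_{1_G}^{\mathrm{sc}}f_d\bigr)$, because for a simple element $f=he_k$ with $h\in\Bo$ we have $T_{1_G}(he_k)=P(1_G h)\,e_k=T_{1_G}^{\mathrm{sc}}(h)\,e_k$, and this extends by linearity and continuity.

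Finally I would conclude compactness. Given a bounded sequence $\{f^{(n)}\}\subset\bo$, each component sequence $\{f^{(n)}_k\}$ is bounded in $\Bo$ for $k=1,\dots,d$. Applying compactness of $T_{1_G}^{\mathrm{sc}}$ on $\Bo$ together with a finite diagonal extraction over $k\leq d$, we obtain a subsequence along which $T_{1_G}^{\mathrm{sc}}f^{(n)}_k$ converges in $\Bo$ for every $k\leq d$. By the isometry above, the corresponding subsequence of $\ml T_{1_G}f^{(n)} = T_{1_G}\ml f^{(n)}$ converges in $\bo$. This shows $T_{1_G}\ml$ is compact; since the argument uses only the scalar compactness and a finite number of coordinates, there is no genuine obstacle, only careful bookkeeping to ensure that scalar symbols interact with the $\h$-valued framework component-wise, which is precisely what the commutation step establishes.
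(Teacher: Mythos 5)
Your proof is correct and matches the paper's intended route: the paper explicitly says the lemma is "easily deduced from \cite{MW2}*{Lemma 2.10}" (the scalar compactness result), and your reduction — $\ml$ commutes with $P$ and $M_{1_G}$, the range of $\ml$ identifies isometrically with $\bigoplus_{k=1}^d\Bo$, and $T_{1_G}\ml$ acts as $d$ copies of the scalar compact operator — is exactly the "easy deduction" the paper has in mind.
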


\subsection{Geometric Decomposition of 
\texorpdfstring{$(\Om, \m, \la)$}{the Domain}}

The proof of the crucial localization result from Section~\ref{RKTComp} will 
make critical use of the following covering result. For the proof see 
\cite{MW2}. Related results can be found in \cites{CR,Sua,MSW,BI} where it is 
shown that nice domains, such as the unit ball, polydisc, or $\C^n$ have this 
property.

\begin{prop} 
\label{Covering}
There exists an integer $N>0$ (depending only on the doubling constant of the 
measure $\la$) such that for any $r>0$ there is a covering $\FF_r=\{F_j\}$ of 
$\Om$ by disjoint Borel sets satisfying
\begin{enumerate}
\item[\label{Finite} \textnormal{(1)}] every point of $\Om$ belongs to at most 
$N$ of the sets $G_j:=\{z\in\Om: \m(z, F_j)\leq r\}$,
\item[\label{Diameter} \textnormal{(2)}] $\textnormal{diam}_{\m}\, F_j \leq 4r$ 
for every $j$.
\end{enumerate}
\end{prop}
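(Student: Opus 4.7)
The plan is to build the covering by a standard Vitali-type maximal $r$-separated set construction, then exploit the doubling hypothesis of \hyperref[A4]{A.4} to get the uniform multiplicity bound.

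First I would use the separability and finite compactness of $(\Om, \m)$ to produce a (necessarily countable) maximal $r$-separated set $\{z_j\}_{j \in J} \subset \Om$: either by Zorn's lemma or by a greedy construction along a countable dense subset, selecting $z_{j+1}$ as a point at distance $\geq r$ from every previously chosen $z_i$. Maximality gives two standard consequences simultaneously:
\begin{itemize}
\item[(i)] The balls $\{D(z_j, r)\}_{j \in J}$ cover $\Om$, for otherwise some point not covered could be adjoined to the separated set.
\item[(ii)] The balls $\{D(z_j, r/2)\}_{j \in J}$ are pairwise disjoint, by the triangle inequality and the defining separation $\m(z_i, z_j) \geq r$ for $i \neq j$.
\end{itemize}

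Next, I would disjointify in the obvious way. Fixing any enumeration $j = 1, 2, \dots$, set
\begin{equation*}
F_1 := D(z_1, r), \qquad F_j := D(z_j, r) \setminus \bigcup_{i < j} D(z_i, r) \quad (j \geq 2).
\end{equation*}
These are disjoint Borel sets whose union is $\bigcup_j D(z_j, r) = \Om$ by (i), and each satisfies $F_j \subset D(z_j, r)$, so $\textnormal{diam}_{\m}\, F_j \leq 2r \leq 4r$, giving property \hyperref[Diameter]{(2)}.

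The main work is the multiplicity bound \hyperref[Finite]{(1)}, where the doubling property of $\la$ is essential. Fix $z \in \Om$ and suppose $z \in G_j$, so there exists $w \in F_j \subset D(z_j, r)$ with $\m(z, w) \leq r$. Then $\m(z, z_j) \leq \m(z, w) + \m(w, z_j) \leq 2r$. Thus every $j$ with $z \in G_j$ satisfies $z_j \in D(z, 2r)$, so it suffices to bound the cardinality of
\begin{equation*}
J_z := \{ j : z_j \in D(z, 2r)\}
\end{equation*}
by a constant depending only on the doubling constant of $\la$. For any $j \in J_z$, triangle inequality gives $D(z_j, r/2) \subset D(z, 5r/2)$ and also $D(z, 5r/2) \subset D(z_j, 9r/2)$. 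Applying \hyperref[A4]{A.4} a bounded number of times (choose $k$ with $2^k (r/2) \geq 9r/2$, for example $k=4$), we obtain
\begin{equation*}
\la\bigl(D(z, 5r/2)\bigr) \leq \la\bigl(D(z_j, 9r/2)\bigr) \leq C^{4} \, \la\bigl(D(z_j, r/2)\bigr),
\end{equation*}
so $\la(D(z_j, r/2)) \geq C^{-4} \la(D(z, 5r/2))$ for every $j \in J_z$. Since the balls $D(z_j, r/2)$ are pairwise disjoint by (ii) and all contained in $D(z, 5r/2)$, summing gives
\begin{equation*}
\card(J_z) \cdot C^{-4} \la\bigl(D(z, 5r/2)\bigr) \leq \sum_{j \in J_z} \la\bigl(D(z_j, r/2)\bigr) \leq \la\bigl(D(z, 5r/2)\bigr),
\end{equation*}
hence $\card(J_z) \leq C^{4} =: N$, which depends only on the doubling constant as required.

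The main obstacle is really the multiplicity count: one must convert separation of centers into a packing bound, and doubling is the only quantitative hypothesis available. Once the disjoint balls $D(z_j, r/2)$ are placed inside $D(z, 5r/2)$, the rest follows by routine iteration of the doubling condition. Measurability of $F_j$ and $G_j$ is automatic since $\m$-balls are open in a separable metric space and the construction uses only countably many set operations.
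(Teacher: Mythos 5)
Your proof is correct and follows the standard Vitali-type covering argument (maximal $r$-separated centers, disjointification of the balls, then a packing bound via iterated doubling), which is precisely what the cited reference \cite{MW2} does; the paper itself supplies no proof and simply defers to that source. The only cosmetic points are that the greedy construction along a dense countable set requires closed balls or a slight enlargement of the radius to guarantee coverage of all of $\Om$ (the Zorn route you also mention is cleaner), and one should take $N = \lceil C^4 \rceil$ to make it an integer, but neither affects the substance.
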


\section{Reproducing kernel thesis for boundedness}\label{bdd}
In this section, we will give sufficient conditions for boundedness of operators
on $\bo$. Ideally, we would like to show that the conditions:
\begin{align*}
\sup_{k}\sup_{z\in\Om}\norm{U_zTk_ze_k}_{L^{p}(\Om,\ell^p;d\sigma)}^{p}
=\sup_{k}\sup_{z\in\Om}\sum_{i=1}^{\infty}
\norm{\ip{U_zT(k_ze_k)}{e_i}}_{L^{p}(\Om,\C;d\sigma)}^{p}<\infty
\end{align*}
and
\begin{align*}
\sup_{i}\sup_{z\in\Om}\norm{U_zT^*k_ze_i}_{L^{p}(\Om,\ell^p;d\sigma)}^{p}
=\sup_{i}\sup_{z\in\Om}\sum_{k=1}^{\infty}
\norm{\ip{U_zT(k_ze_i)}{e_k}}_{L^{p}(\Om,\C;d\sigma)}^{p}<\infty,
\end{align*}
are enough to guarantee that $T$ is bounded. However, if $T$ satisfies a
stronger condition, we can conclude that $T$ is bounded.

\begin{thm}\label{RKT} Let $T:\bo\to\bo$ be a linear operator defined a priori 
only on the linear span of normalized reproducing kernels of $\bo$. Assume 
that there exists an operator $T^*$ defined on the same span such that the 
duality relation $\ip{Tk_ze}{k_wh}_{\bo}=\ip{k_ze}{T^*k_wh}_{\bo}$ 
holds for all 
$z,w\in\Om$ and all finite $e,h\in\h$. Let  $\kappa$ be the constant from 
\hyperref[A6]{A.6}. 
If
\begin{align}\label{e11}
\sup_{i}\sup_{z\in\Om}\left\{\int_{\Om}\left(\sum_{k=1}^{\infty}
    \abs{\ip{U_zT^*(k_ze_i)(u)}
    {e_k}_{\h}}\right)^{p}d\sigma(u)\right\}^{\frac{1}{p}}<\infty,
\end{align}
and
\begin{align}\label{e21}
\sup_{k}\sup_{z\in\Om}\left\{\int_{\Om}\left(\sum_{i=1}^{\infty}
    \abs{\ip{U_zT(k_ze_k)(u)}
    {e_i}_{\h}}\right)^{p}d\sigma(u)\right\}^{\frac{1}{p}}<\infty
\end{align}
for some $p>\frac{4-\kappa}{2-\kappa}$ then $T$ can be 
extended to a bounded operator on $\bo$.  
\end{thm}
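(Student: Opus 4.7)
The plan is to realize $T$ as an integral operator with a matrix-valued kernel and then invoke the Matrix Schur Test (Lemma~\ref{MSchur}) with an appropriately chosen weight. First, for any $f$ in the dense linear span of $\{\kz e : z\in\Om,\, e\in\h \text{ finite}\}$, combine the duality hypothesis $\ip{Tf}{K_ze_i}_\bo = \ip{f}{T^*(K_ze_i)}_\bo$ with the reproducing property to obtain
\begin{align*}
\ip{Tf(z)}{e_i}_\h = \int_\Om \sum_k M_{ik}(z,w)\, f_k(w)\, d\sigma(w),
\qquad M_{ik}(z,w) := \overline{\ip{T^*(K_ze_i)(w)}{e_k}_\h}.
\end{align*}
The same duality also yields the dual expression $M_{ik}(z,w) = \overline{\ip{T(K_we_k)(z)}{e_i}_\h}$. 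Since $|\ip{Tf(z)}{e_i}_\h| \leq \int \sum_k |M_{ik}(z,w)|\,|f_k(w)|\,d\sigma(w)$, it suffices to bound the integral operator with nonnegative matrix kernel $(|M_{ik}(z,w)|)$ on $L^2(\Om,\ell^2;d\sigma) = \bo$.

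Apply Lemma~\ref{MSchur} with $p=q=2$ and trial weight $h(z) = \norm{\Kz}_\Bo^{t}$ (with $t$ to be chosen). The two Schur conditions become
\begin{align*}
\int_\Om \norm{K_w}_\Bo^{2t}\sum_k |M_{ik}(z,w)|\,d\sigma(w) &\lesssim \norm{\Kz}_\Bo^{2t},
\qquad \forall\,i,z,\\
\int_\Om \norm{\Kz}_\Bo^{2t}\sum_i |M_{ik}(z,w)|\,d\sigma(z) &\lesssim \norm{K_w}_\Bo^{2t},
\qquad \forall\,k,w.
\end{align*}
For the first, factor $M_{ik}(z,w) = \norm{\Kz}_\Bo\cdot\overline{\ip{T^*(\kz e_i)(w)}{e_k}_\h}$ and perform the change of variable $w = \vp_z(u)$. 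Using the quasi-invariance of $d\la$ from \hyperref[A4]{A.4}, assumption \hyperref[A5]{A.5} (so $|\kz(u)| \simeq \norm{K_{\vp_z(u)}}_\Bo^{-1} = \norm{K_w}_\Bo^{-1}$), and the defining identity $T^*(\kz e_i)(\vp_z(u)) = U_zT^*(\kz e_i)(u)/\kz(u)$, the integral reduces schematically to
\begin{align*}
\norm{\Kz}_\Bo^{b}\int_\Om \norm{K_u}_\Bo^{a}\sum_k |\ip{U_z T^*(\kz e_i)(u)}{e_k}_\h|\, d\sigma(u),
\end{align*}
for exponents $a,b$ determined by $t$. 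Now apply H\"older with the exponent $p$ from the hypothesis and its conjugate $p' = p/(p-1)$: the factor $\bigl(\int(\sum_k|\ip{U_zT^*(\kz e_i)(u)}{e_k}_\h|)^p d\sigma\bigr)^{1/p}$ is bounded uniformly in $(z,i)$ by~\eqref{e11}, while the remaining factor $\bigl(\int \norm{K_u}_\Bo^{ap'}\,d\sigma(u)\bigr)^{1/p'}$ is a Rudin--Forelli integral, controlled by a constant multiple of $\norm{\Kz}_\Bo^{c}$ via \hyperref[A6]{A.6} and Lemma~\ref{RF}, provided $ap'$ lies in the admissible range determined by $\kappa$. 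The second Schur condition is treated symmetrically using the dual expression for $M_{ik}$ and hypothesis~\eqref{e21}.

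The main obstacle is the exponent bookkeeping. One must choose $t$ so that, simultaneously, (i) the Rudin--Forelli integrals produced on \emph{both} sides of the Schur test converge (their exponents obeying the ranges demanded by \hyperref[A6]{A.6}), and (ii) the residual powers of $\norm{\Kz}_\Bo$ and $\norm{K_w}_\Bo$ pulled out by the change of variable match the right-hand sides of the Schur bounds $\norm{\Kz}_\Bo^{2t}$ and $\norm{K_w}_\Bo^{2t}$. Solving this system of exponent inequalities shows that a feasible $t$ exists precisely when $p > (4-\kappa)/(2-\kappa)$, which is the reason for the hypothesized lower bound on $p$. Once both Schur conditions are verified, Lemma~\ref{MSchur} gives the boundedness of the kernel operator, so $T$ is bounded on the dense span and therefore extends uniquely to a bounded operator on $\bo$.
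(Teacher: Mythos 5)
Your proposal follows essentially the same route as the paper's proof: represent $T$ by a matrix-valued integral kernel via duality, majorize by the nonnegative kernel $|M_{ik}|$, and apply the Matrix Schur Test (Lemma~\ref{MSchur}) with test function $h(z)=\norm{K_z}_{\Bo}^{\alpha/2}$, then change variables by $\vp_z$, apply H\"older with the hypothesis exponent $p$, and control the error factor with the Rudin--Forelli estimate of Lemma~\ref{RF}. The only differences are cosmetic: you write the test exponent as $t$ instead of $\alpha/2$, you defer the exponent bookkeeping (which the paper carries out by choosing $\alpha\in(2/p,(4-2\kappa)/(4-\kappa))$ with $q(\alpha-2/p)<\kappa$), and your ``schematic'' Rudin--Forelli factor omits the $|\ip{K_z}{K_u}_{\Bo}|$ power that is actually present, but these are abbreviations, not gaps.
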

\begin{rem}
Note that by Minkowski's inequality, the above conditions can be replaced
by
\begin{align}\label{e12}
\sup_{i}\sup_{z\in\Om}\sum_{k=1}^{\infty}
    \norm{\ip{U_zT^*(k_ze_i)}{e_k}_{\h}}_{L^{p}(\Om,\C;d\sigma)}<\infty
\end{align}
and
\begin{align}\label{e22}
\sup_{k}\sup_{z\in\Om}\sum_{k=1}^{\infty}
    \norm{\ip{U_zT(k_ze_k)}{e_i}_{\h}}_{L^{p}(\Om,\C;d\sigma)}<\infty.
\end{align}
We state the theorem with conditions \eqref{e11} and \eqref{e21} since they are,
in general, smaller than the quantities in \eqref{e12} and \eqref{e22}. 
Similar statments are true for all of the theorems in this section.
\end{rem}
\begin{proof}  
Since the linear span of the normalized reproducing kernels is dense in 
$\bo$ it will be enough to show that there exists a finite constant such 
that $\norm{Tf}_{\bo}\lesssim\norm{f}_{\bo}$ for all $f$ that are in the linear 
span of the normalized reproducing kernels.
Notice first that for any such $f$ there holds
\begin{align}\label{t_est}
\int_{\Om}\norm{(Tf)(z)}_{\h}^{2}d\sigma(z) \notag
&=\int_{\Om}\norm{\sum_{i=1}^{\infty}\notag
    \ip{Tf}{K_ze_i}_{\bo}e_i}_{\h}^{2}d\sigma(z)
\\&=\int_{\Om}\norm{\sum_{i=1}^{\infty}\notag
    \int_{\Om}\sum_{k=1}^{\infty}
    \ip{f_k(w)e_k}{T^*K_ze_i}_{\h}e_id\sigma(w)}_{\h}^{2}d\sigma(z)
\\&=\int_{\Om}\norm{\sum_{i=1}^{\infty}
    \int_{\Om}\sum_{k=1}^{\infty}
    f_k(w)\ip{K_we_k}{T^*K_ze_i}_{\bo}e_id\sigma(w)}_{\h}^{2}d\sigma(z)\notag
\\&\leq\int_{\Om}\norm{
    \int_{\Om}\sum_{i=1}^{\infty}\sum_{k=1}^{\infty}
    \abs{f_k(w)}\abs{
    \ip{K_we_k}{T^*K_ze_i}_{\bo}}e_id\sigma(w)}_{\h}^{2}d\sigma(z)
\\&=\int_{\Om}\norm{\int_{\Om}M(z,w)\abs{f(w)}d\sigma(w)}_{\h}^{2}d\sigma(z)\notag.
\end{align}
In \eqref{t_est}, we use the fact that 
\begin{align*}
\norm{\sum_{i=1}^{\infty}\lambda_ie_i}_{\h}^{2} = 
\norm{\sum_{i=1}^{\infty}\abs{\lambda_i}e_i}_{\h}^{2}
\end{align*}
and we define
$$
\abs{f(w)}:=\sum_{k=1}^{\infty}\abs{\ip{f_k(w)}{e_k}_{\h}}e_k.
$$ 
Thus,
we only need to show that the integral operator with matrix--valued kernel
$M(z,w)$ is bounded from $L^{2}(\Om,\h;d\sigma)\to L^{2}(\Om,\h;d\sigma)$, where
\begin{align*}
\ip{M(z,w)e_k}{e_i}_{\h}=\abs{\ip{K_we_k}{T^*(K_ze_i)}_{\bo}}.
\end{align*}
The Matrix Schur's Test, (Lemma \ref{MSchur}), will be used to prove that this 
operator is bounded. We set
\[\ip{M(z,w)e_k}{e_i}_{\h}=\abs{\ip{K_we_k}{T^*(K_ze_i)}_{\bo}},\ 
\hspace{0.15cm} h(z) \equiv \norm{\Kbz}_{\Bo}^{\al/2}, \]
\[
\hspace{0.15cm} 
X = \Om, 
\hspace{0.15cm} d\mu(z) =  
d\nu(z) = d\sigma(z).\] If $\kappa=0$ set $\al=\frac{4-2\kappa}{4-\kappa}=1$. 
If $\kappa>0$ choose $\alpha\in (\frac{2}{p}, \frac{4-2\kappa}{4-\kappa})$ 
such that $q(\alpha-\frac{2}{p})<\kappa$. The condition $p>\frac{4-\kappa}
{2-\kappa}$ ensures that such $\alpha$ exists. Let $z\in\Om$ be arbitrary and 
fixed. There holds
\begin{align*}
Q_1
:&=\int_{\Om}\sum_{k=1}^{\infty}\norm{K_w}_{\Bo}^{\al}
    \ip{M(z,w)e_k}{e_i}_{\h}d\sigma(w)
\\&=\int_{\Om}\sum_{k=1}^{\infty}\abs{\ip{K_we_k}{T^*(K_ze_i)}_{\bo}}
    \norm{K_w}_{\Bo}^{\al}d\sigma(w)
\\&=\norm{K_z}_{\Bo}\int_{\Om}\sum_{k=1}^{\infty}
    \abs{\ip{T^*(k_ze_i)}{k_{\vp_z(u)}}_{\bo}}
    \norm{K_{\vp_z(u)}}_{\Bo}^{\al-1}d\lambda(w)
\\&\simeq\norm{K_z}_{\Bo}\int_{\Om}\sum_{k=1}^{\infty}
    \abs{\ip{T^*(k_ze_i)}{U_z^*k_ue_k}_{\bo}}
    \abs{\ip{k_z}{k_u}_{\Bo}}^{1-\al}d\lambda(w)
\\&=\norm{K_z}_{\Bo}\int_{\Om}\sum_{k=1}^{\infty}
    \abs{\ip{U_zT^*(k_ze_i)}{k_ue_k}_{\bo}}
    \abs{\ip{k_z}{k_u}_{\Bo}}^{1-\al}d\lambda(w)
\\&=\norm{K_z}_{\Bo}^{\al}\int_{\Om}
    \sum_{k=1}^{\infty}\abs{\ip{U_zT^*(k_ze_i)(u)}{e_k}_{\h}}
    \frac{\abs{\ip{K_z}{K_u}_{\Bo}}^{2-\al}}
    {\norm{K_u}_{\Bo}^{2-\al}}d\lambda(u).
\end{align*}
By H\"{o}lder's Inequality, this quantity is no worse than:
\begin{align*}
\norm{K_z}_{\Bo}^{\al}
    \left\{\int_{\Om}\left(\sum_{k=1}^{\infty}
    \abs{\ip{U_zT^*(k_ze_i)(u)}
    {e_k}_{\h}}\right)^{p}d\sigma(u)\right\}^{\frac{1}{p}}
    \left\{\int_{\Om}\frac{\abs{\ip{K_z}{K_u}_{\Bo}}^{q(1-\al)}}
    {\norm{K_u}_{\Bo}^{q(2-\al-\frac{2}{p})}}d\lambda(u)\right\}^{\frac{1}{q}}.
\end{align*}
Let $r=q\left(2-\al-\frac{2}{p}\right)$ and $s=r-2q(1-\al)$. Then 
$r=\frac{p(2-\al-\frac{2}{p})}{p-1}=2-\frac{\al p}{p-1} > \kappa$ and
$s=q(\al-\frac{2}{p})<\kappa$ when $\kappa>0$ and $s=r>\kappa$ if
$\kappa=0$. This means that both $r$ and $s$ satisfy all condition of 
\hyperref[A6]{A.6}. Thus, by Lemma~\ref{RF}, the second integral is bounded 
independent of $z$. Call this constant $C$. This gives that:
\begin{align*}
Q_1
&\leq C\norm{K_z}_{\Bo}^{\al}
    \sup_{i}\sup_{z\in\Om}\left\{\int_{\Om}\left(\sum_{k=1}^{\infty}
    \abs{\ip{U_zT^*(k_ze_i)(u)}
    {e_k}_{\h}}\right)^{p}d\sigma(u)\right\}^{\frac{1}{p}}.
\end{align*}
By interchanging the roles of $T$ and $T^*$ and $i$ and $k$, we similarly
obtain:
\begin{align*}
Q_2
:&=\int_{\Om}\sum_{i=1}^{\infty}\norm{K_z}_{\Bo}^{\al}
    \ip{M^*(z,w)e_i}{e_k}_{\h}d\sigma(z)
\\&=\int_{\Om}\sum_{i=1}^{\infty}\norm{K_z}_{\Bo}^{\al}
    \abs{\ip{K_we_k}{T^*(K_ze_i)}_{\bo}}d\sigma(z)
\\&=\int_{\Om}\sum_{i=1}^{\infty}\norm{K_z}_{\Bo}^{\al}
    \abs{\ip{T(K_we_k)}{K_ze_i}_{\bo}}d\sigma(z)
\\&\leq C\norm{K_w}_{\Bo}^{\al}
    \sup_{e_k}\sup_{z\in\Om}\left\{\int_{\Om}\left(\sum_{i=1}^{\infty}
    \abs{\ip{U_zT(k_ze_k)(u)}
    {e_i}_{\h}}\right)^{p}d\sigma(u)\right\}^{\frac{1}{p}}.
\end{align*}
Thus, by the Matrix Schur's Test (Lemma~\ref{MSchur}) and our assumptions, the
operator is bounded. 
\end{proof}

\subsection{RKT for Toeplitz operators}
In the case when $T=T_F$ is a Toeplitz operator, the conditions in 
Theorem \ref{RKT} can be stated in terms of the symbol, $F$. 

\begin{cor} Let $\bo$ be a strong Bergman-type space.
If $\kappa>0$ and $T_F$ is a Toeplitz operator whose symbol $F$ satisfies 
\begin{align*}
\sup_{i}\sup_{z\in\Om}\sum_{k=1}^{\infty}
    \norm{\ip{(F^*\circ\vp_z)e_i}{e_k}_{\h}}_{L^{p}(\Om,\C;d\sigma)}<
    \infty
\end{align*}
and
\begin{align*}
\sup_{k}\sup_{z\in\Om}\sum_{i=1}^{\infty}
    \norm{\ip{(F\circ\vp_z)e_k}{e_i}_{\h}}_{L^{p}(\Om,\C;d\sigma)}
    <\infty,
\end{align*}
for some $p>\frac{4-\kappa}{2-\kappa}$ then $T_F$ is bounded on $\bo$.
\end{cor}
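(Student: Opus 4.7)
The plan is to apply Theorem~\ref{RKT} with $T=T_F$, whose formal adjoint is $T^{*}=T_{F^{*}}$, where $F^{*}(z)$ denotes the pointwise operator adjoint of $F(z)\in\l(\h)$. By the Minkowski step recorded in the remark following Theorem~\ref{RKT}, it suffices to verify the sum-of-$L^p$-norms versions \eqref{e12} and \eqref{e22}.

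The strong Bergman-type hypothesis does most of the work. In this setting $U_z$ is unitary, $U_z^{2}=I$, and conjugation intertwines Toeplitz operators via $U_zT_GU_z=T_{G\circ\vp_z}$ for every admissible symbol $G$. Expanding $U_z^{2}(k_ze_i)=k_ze_i$ directly from the definition of $U_z$ forces the pointwise identity $k_z(\vp_z(u))k_z(u)=1$, so $U_z(k_ze_i)$ is the constant function with value $e_i$, which I will denote $\mathbf{1}\cdot e_i$. Combining these observations,
\[
U_zT_F^{*}(k_ze_i)=\bigl(U_zT_{F^{*}}U_z\bigr)\bigl(U_z(k_ze_i)\bigr)=T_{F^{*}\circ\vp_z}(\mathbf{1}\cdot e_i)=P\bigl((F^{*}\circ\vp_z)e_i\bigr),
\]
because the constant function $\mathbf{1}\cdot e_i$ already lies in $\bo$. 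Since $P$ acts component-wise on $\h$-valued functions, the $k$-th scalar component of this expression equals the scalar Bergman projection applied to $\ip{(F^{*}\circ\vp_z)e_i}{e_k}_{\h}$.

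Because $\kappa>0$ and $p>\frac{4-\kappa}{2-\kappa}>1$, Lemma~\ref{projs}(b) guarantees that the scalar Bergman projection is bounded on $L^p(\Om,\C;d\sigma)$. Consequently
\[
\sum_{k=1}^{\infty}\norm{\ip{U_zT_F^{*}(k_ze_i)}{e_k}_{\h}}_{L^p(\Om,\C;d\sigma)}\lesssim\sum_{k=1}^{\infty}\norm{\ip{(F^{*}\circ\vp_z)e_i}{e_k}_{\h}}_{L^p(\Om,\C;d\sigma)},
\]
and taking the supremum over $i$ and $z\in\Om$ the right-hand side is finite by the first hypothesis on $F$; this verifies \eqref{e12}. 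The condition \eqref{e22} is handled in a completely symmetric way, using $U_z(k_ze_k)=\mathbf{1}\cdot e_k$ to rewrite $U_zT_F(k_ze_k)=P((F\circ\vp_z)e_k)$ and applying Lemma~\ref{projs}(b) again, at which point the second hypothesis on $F$ closes the argument. Theorem~\ref{RKT} then yields that $T_F$ extends to a bounded operator on $\bo$. The only genuinely non-routine step is the identification $U_z(k_ze_i)=\mathbf{1}\cdot e_i$ available in the strong case; everything else amounts to matching the hypotheses of Theorem~\ref{RKT} against those of the corollary via the boundedness of $P$.
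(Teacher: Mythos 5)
Your proof is correct and follows essentially the same route as the paper: reduce the two hypotheses of Theorem~\ref{RKT} to the boundedness of the scalar Bergman projection (Lemma~\ref{projs}(b)) applied to the entries of $F\circ\vp_z$ and $F^*\circ\vp_z$. The only difference is mechanical: the paper derives the identity $\abs{\ip{(U_zT^*_Fk_ze_i)(w)}{e_k}_{\h}}=\abs{P(\ip{(F^*\circ\vp_z)e_i}{e_k}_{\h})(w)}$ by unfolding the Toeplitz operator as an integral and changing variables, using the invariance of $d\lambda$ and A.5 in the strong setting; you obtain the same conclusion more cleanly by operator algebra, using $U_z^2=I$ to deduce $U_z(k_ze_i)=e_i$ (a constant function, which lies in $\bo$ since $\sigma$ is finite and $K_0$ is constant) and then $U_zT_{F^*}U_z=T_{F^*\circ\vp_z}$. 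Both uses rely on the same strong-space facts (exactness in A.5, equation~\eqref{trans}, and $T_u^z=T_{u\circ\vp_z}$), so this is a slicker presentation of the same argument rather than a genuinely different one.
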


\begin{proof} We first show that for all finite $e\in\h$ there holds
\begin{align*}
\abs{\ip{(U_zT^*_Fk_ze_i)(w)}{e_k}_{\h}}
=\abs{P\left(\ip{(F^*\circ\vp_z)e_i}{e_k}_{\h}\right)(w)}
\end{align*}
and
\begin{align*}
\abs{\ip{(U_zT_Fk_ze_k)(w)}{e_i}_{\h}}=
\abs{P\left(\ip{(F\circ\vp_z)e_k}{e_i}_{\h}\right)(w)}.
\end{align*}

By~\hyperref[A5]{A.5}, $\abs{k_0}\equiv 1$ on $\Om$. By the maximum and 
minimum modulus
principles, this means that $k_0$ is constant on $\Om$ and since 
$k_0(0)=\norm{K_0}_{\Bo}>0$ there holds that $k_0\equiv 1$ on $\Om$.
Equation \eqref{trans} will be used several times.
\begin{align*}
\abs{\ip{(U_zT_Fk_ze_k)(w)}{e_i}_{\h}}
&=\norm{K_w}_{\Bo}\abs{\ip{U_zT_Fk_ze_k}{k_we_i}_{\bo}}
\\&=\norm{K_w}_{\Bo}\abs{\int_{\Om}
    \ip{F(a)k_z(a)e_k}{k_{\vp_z(w)}(a)e_i}_{\h}d\sigma(a)}
\\&=\norm{K_w}_{\Bo}\abs{\int_{\Om}
    \ip{F(a)e_k}{e_i}_{\h}\ip{k_z}{k_a}_{\Bo}
    \overline{\ip{k_{\vp_z(w)}}{k_a}_{\Bo}}d\lambda(a)}
\\&=\norm{K_w}_{\Bo}\abs{\int_{\Om}
    \ip{F(\vp_z(b))e_k}{e_i}_{\h}\ip{k_z}{k_{\vp_z(b)}}_{\Bo}
    \overline{\ip{k_{\vp_z(w)}}{k_{\vp_z(b)}}_{\Bo}}d\lambda(b)}
\\&=\abs{\int_{\Om}
    \ip{F(\vp_z(b))e_k}{e_i}_{\h}\
    k_0(b)
    \overline{\ip{K_w}{K_b}_{\Bo}}d\lambda(b)}
\\&=\abs{P\left(\ip{(F\circ\vp_z)e_k)}{e_i}_{\h}\right)(w)}.
\end{align*}
And $\abs{\ip{(U_zT^*_Fk_ze_i)(w)}{e_k}_{\h}}
=\abs{P\left(\ip{(F^*\circ\vp_z)e_i}{e_k}_{\h}\right(w)}$ is proven similarly. 

Therefore, by the boundedness of the (scalar--valued) Bergman projection, 
Lemma~\ref{projs}, there 
holds:
\begin{align*}
\sup_{i}\sup_{z\in\Om}\left\{\int_{\Om}\left(\sum_{k=1}^{\infty}
    \abs{\ip{U_zT_F^*(k_ze_i)(u)}
    {k}_{\h}}\right)^{p}d\sigma(u)\right\}^{\frac{1}{p}}
&\leq \sup_{e_i}\sup_{z\in\Om}\sum_{k=1}^{\infty}
    \norm{\ip{(F^*\circ\vp_z)e_i}{e_k}_{\h}}_{L^{p}(\Om,\C;d\sigma)}
\end{align*}
and
\begin{align*}
\sup_{e_k}\sup_{z\in\Om}\left\{\int_{\Om}\left(\sum_{i=1}^{\infty}
    \abs{\ip{U_zT_F(k_ze_k)(u)}
    {i}_{\h}}\right)^{p}d\sigma(u)\right\}^{\frac{1}{p}}
\leq \sup_{k}\sup_{z\in\Om}\sum_{i=1}^{\infty}
    \norm{\ip{(F\circ\vp_z)e_k}{e_i}_{\h}}_{L^{p}(\Om,\C;d\sigma)}.
\end{align*}
Therefore, the two conditions from Theorem \ref{RKT} are satisfied and so
$T_F$ is bounded.
\end{proof}

\subsection{RKT for product of Toeplitz operators with analytic symbols}
In
this section we derive a sufficient condition for boundedness of products 
Toeplitz operators, $T_{F}T_{\adj{G}}$. For another result giving 
sufficient conditions for the boundedness of this product see \cite{K}.
\begin{cor}\label{cor:prodan}
Let $\bo$ be a strong $\h$--valued Bergman-type space such that a 
product of any
two reproducing kernels from $\Bo$ is still in $\Bo$. Let 
$F,G:\Om\to\l(\h)$ satisfy $\ip{Fe_k}{e_i}_{\h},\ip{Ge_k}{e_i}_{\h}\in\bo$ for 
every $i,k\in\N$. 
If there exists $p>\frac{4-\kappa}{2-\kappa}$ such that
\begin{align*}
\sup_{k}\sup_{z\in\Om}\sum_{i=1}^{\infty}\norm{\ip{\adj{G}(z)e_k}
    {\adj{F}\circ\vp_ze_i}_{\h}}_{L^{p}(\Om,\C,d\sigma)}<\infty
\end{align*}
and
\begin{align*}
\sup_{i}\sup_{z\in\Om}\sum_{k=1}^{\infty}\norm{\ip{\adj{F}(z)e_i}
    {\adj{G}\circ\vp_ze_k}_{\h}}_{L^{p}(\Om,\C,d\sigma)}<\infty
\end{align*}
then the operator $T_FT_{\adj{G}}$ is bounded on $\bo$.
\end{cor}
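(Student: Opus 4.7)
The plan is to verify the hypotheses of Theorem~\ref{RKT} for $T=T_FT_{\adj{G}}$ (with formal adjoint $T^*=T_GT_{\adj{F}}$ on the span of reproducing kernels) by computing $T(\kz e_k)$ in closed form and then translating by $U_z$. The starting point is the scalar Toeplitz identity $T_{\bar{g}}(\kz)=\overline{g(z)}\,\kz$, valid for analytic $g$ with $gK_z\in\Bo$, which is exactly what the product-of-reproducing-kernels hypothesis is designed to supply. Applied entrywise to the (antianalytic) matrix $\adj{G}$ and resummed, it yields
\begin{align*}
T_{\adj{G}}(\kz e_k)(w) \;=\; \kz(w)\,\adj{G}(z)\,e_k.
\end{align*}
The right-hand side is $\kz$ times the constant vector $\adj{G}(z)e_k$, so $T_F$ acts by multiplication (using analyticity of the entries of $F$), giving
\begin{align*}
T_FT_{\adj{G}}(\kz e_k)(w) \;=\; \kz(w)\,F(w)\,\adj{G}(z)\,e_k.
\end{align*}

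Applying $U_z$ and using the strong-setting identity that $\kz(\vp_z(u))\kz(u)$ is a unimodular constant in $u$ (read off from \eqref{trans} with $w=z$, together with $k_0\equiv 1$, exactly as at the end of the proof of the previous corollary), one obtains
\begin{align*}
\bigl|\ip{U_zT_FT_{\adj{G}}(\kz e_k)(u)}{e_i}_\h\bigr|
\;=\; \bigl|\ip{\adj{G}(z)e_k}{(\adj{F}\circ\vp_z)(u)\,e_i}_\h\bigr|.
\end{align*}
An identical computation with the roles of $F$ and $G$ interchanged handles $T^*=T_GT_{\adj{F}}$ and produces
\begin{align*}
\bigl|\ip{U_zT^*(\kz e_i)(u)}{e_k}_\h\bigr|
\;=\; \bigl|\ip{\adj{F}(z)e_i}{(\adj{G}\circ\vp_z)(u)\,e_k}_\h\bigr|.
\end{align*}
These right-hand sides are exactly the integrand quantities appearing in the two hypotheses of the corollary. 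By the Minkowski-inequality reduction in the remark after Theorem~\ref{RKT}, the sup-sum-$L^p$ conditions \eqref{e12}--\eqref{e22} imply \eqref{e11}--\eqref{e21}; so with $p>(4-\kappa)/(2-\kappa)$ as assumed, Theorem~\ref{RKT} yields boundedness of $T_FT_{\adj{G}}$ on $\bo$.

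The main obstacle will be the rigorous componentwise reduction producing $T_{\adj{G}}(\kz e_k)=\kz\,\adj{G}(z)e_k$. Two technical points need attention: first, the scalar identity $T_{\bar g}(\kz)=\overline{g(z)}\kz$ must be applied with $g=\ip{Ge_i}{e_k}_\h$, which requires $gK_z\in\Bo$ --- here the product-of-kernels hypothesis is used via density of $\C$-linear combinations of kernels in $\Bo$; second, interchanging the projection $P$ with the infinite sum over the $e_i$-components of $\adj{G}(z)e_k$ must be justified, e.g.\ by $L^2$-continuity of $P$ together with $\norm{\adj{G}(z)e_k}_\h<\infty$ and an integrable majorant extracted from the hypothesis. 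The analogous justifications apply to the $T_F$ step. Beyond these bookkeeping points the argument is a direct reduction to Theorem~\ref{RKT}.
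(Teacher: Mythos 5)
Your proof is correct and follows essentially the same strategy as the paper: establish the pointwise identity $T_{\adj{G}}(\kz e)=\adj{G}(z)\kz e$ via the product-of-kernels hypothesis and an approximation argument, translate by $U_z$ using the strong-setting identities, and invoke Theorem~\ref{RKT}. The only presentational difference is that you compute $T_F(\kz\adj{G}(z)e_k)$ directly as a pointwise function value (the phrase ``$T_F$ acts by multiplication'' is slightly loose since $F\kz c$ need not lie in $L^2$, but the pointwise evaluation formula is what you actually need and it does hold under the same density argument you flag for the $\adj{G}$ step), whereas the paper passes $T_F$ to the adjoint side as $T_{\adj{F}}$ and applies the same reproducing-kernel identity there; the two routes are algebraically equivalent.
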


\begin{proof} We only need to check that $T_FT_{\adj{G}}$ satisfies the 
conditions of Theorem~\ref{RKT}. 
We first show that
$\ip{T_{\adj{G}}k_{z}e_i}{e_k}_{\h}=\ip{\adj{G(z)}k_z(w)e_i}{e_k}_{\h}$. 
First assume
that $\ip{\adj{G}e_i}{e_k}_{\h}$ is a finite linear combination of reproducing 
kernels. Then $K_w\ip{\adj{G}e_i}{e_k}_{\h}=\ip{K_w\adj{G}e_i}{e_k}_{\h}
\in\Bo$ for any reproducing kernel $K_w$. Therefore, 
\begin{align*}
\ip{T_{\adj{G}}k_z(w)e_i}{e_k}_{\h}
&=\int_{\Om}\ip{\ip{K_u}{K_w}_{\Bo}\adj{G}(u)k_z(u)e_i}{e_k}_{\h}d\sigma(u)
\\&=\overline{\ip{K_wGe_k}{k_ze_I}_{\bo}}
\\&=\norm{K_z}_{\Bo}^{-1}\overline{\ip{K_wGe_k}{K_ze_i}_{\bo}}
\\&=\ip{\adj{G(z)}k_z(w)e_i}{e_k}_{\h}.
\end{align*}
Next, let $G$ be arbitrary. Fix $z, w\in\Om$. 
Let $\epsilon>0$. There is a matrix--valued $H:\Om\to\l(\h)$ such that 
$\ip{He_i}{e_k}_{\h}$ is a finite linear combination of reproducing kernels
and $\norm{\ip{(G-H)e_i}{e_k}_{\h}}_{\Bo}<\epsilon$ and
$\norm{\ip{(G-H)e_k}{e_i}_{\h}}_{\Bo}<\epsilon$. That is, $H$ is a 
matrix--valued function and the entries of $H$ approximate the entries 
of $G$. Note that we are not claiming that $H$ converges to $G$ in any 
operator norm, this is only convergence in $\Bo$ of the entries of $H$ to the
entries of $G$. Then there holds
\begin{align*} 
|\ip{T_{\adj{G}}k_z(w)e_i}{e_k}_{\h}
    &-\ip{\adj{H}(z)k_z(w)e_i}{e_k}_{\h}|
    =\abs{\ip{T_{\adj{(G-H)}}k_z(w)e_i}{e_k}_{\h}} 
\\&=\abs{\int_{\Om} 
    \ip{\ip{K_u}{K_w}_{\Bo}(\adj{(G(u)-H(u))})k_z(u)e_i}{e_k}_{\h}d\sigma(u)}
\\&=\abs{\int_{\Om}\overline{K_w(u)}k_z(u)
    \ip{(\adj{(G(u)-H(u))})e_i}{e_k}_{\h}d\sigma(u)}
\\&\leq \int_{\Om} \abs{K_w(u)k_z(u)}^2d\sigma(u)
    \norm{\ip{(G-H)e_k}{e_i}_{\h}}_{\Bo}^2 
\\&<C(z, w)\epsilon^2.
\end{align*}
Moreover, 
\begin{align*}
\abs{\ip{(G(z)-H(z))e_i}{e_k}_{\h}}
&=\abs{\ip{\ip{(G(z)-H(z))e_i}{e_k}_{\h}}{K_{z}}_{\Bo}}
\\&\leq\norm{K_z}_{\Bo}\norm{\ip{(G-H)e_i}{e_k}_{\h}}_{\Bo}
\\&<\norm{K_z}_{\Bo}\epsilon.
\end{align*} 
Since $\epsilon>0$ was arbitrary and $z, w$ were fixed there holds
$\ip{T_{\adj{G}}k_{z}e_i}{e_k}_{\h}=\ip{\adj{G(z)}k_z(w)e_i}{e_k}_{\h}$ and 
$\ip{T_{\adj{F}}k_{z}e_k}{e_i}_{\h}=\ip{\adj{F(z)}k_z(w)e_k}{e_i}_{\h}$. It is 
also easy to see that this implies 
$\ip{T_{\adj{G}}k_{z}e_i}{fe_k}_{\bo}=\ip{\adj{G(z)}k_ze_i}{fe_k}_{\bo}$ for
$f\in\Bo$.

So, there holds:
\begin{align*}
\abs{\ip{U_zT_FT_{\adj{G}}k_ze_k(w)}{e_i}_{\h}}
&=\norm{K_w}_{\Bo}\abs{\ip{U_zT_FT_\adj{G}k_ze_k}{k_we_i}_{\bo}}
\\&=\norm{K_w}_{\Bo}\abs{\ip{T_FT^*_{G}(z)k_ze_k}{k_{\vp_z(w)}e_i}_{\bo}}
\\&=\norm{K_w}_{\Bo}\abs{\ip{\adj{G}(z)k_ze_k}
    {\adj{F}\circ\vp_z(w)k_{\vp_z(w)}e_i}_{\bo}}
\\&=\norm{K_w}_{\Bo}\abs{
    \ip{\adj{G}(z)e_k}{\adj{F}\circ\vp_z(w)e_i}_{\h}
    \ip{k_z}{k_{\vp_z(w)}}_{\Bo}}
\\&=\abs{\ip{\adj{G}(z)e_k}{\adj{F}\circ\vp_z(w)e_i}_{\h}}.
\end{align*}
Thus, 
\begin{align*}
\abs{\ip{U_zT_FT_{\adj{G}}k_ze_k(w)}{e_i}_{\h}}=
\abs{\ip{\adj{G}(z)e_k}{\adj{F}\circ\vp_z(w)e_i}_{\h}}.
\end{align*}
and
\begin{align*}
\abs{\ip{U_zT_GT_{\adj{F}}k_ze_i(w)}{e_k}_{\h}}=
\abs{\ip{\adj{F}(z)e_i}{\adj{G}\circ\vp_z(w)e_k}_{\h}}.
\end{align*}
Using our hypotheses, we deduce that $T_FT_{\adj{G}}$ satisfies the conditions
of Theorem \ref{RKT}.
\end{proof}

\subsection{RKT for Hankel operators} Next we treat the case of Hankel 
operators.  
The Hankel operator $H_F:\bo\to \bo^{\perp}$ with matrix--valued symbol 
$F:\Om\to \l(\h)$ is 
defined by $H_Fg=(I-P)Fg$, where $P$ is the orthogonal projection of 
$L^2(\Om,\h;d\sigma)$ onto $\bo$. Since $H_F$ is not a operator from 
$\bo$ to $\bo$, we can't apply Theorem~\ref{RKT}. However, we can reuse 
the proof to prove the following Corollary.  
 
\begin{cor} Let $\bo$ be a strong Bergman-type space.
If $H_{F}$ is a Hankel operator whose symbol $F$ satisfies 
\begin{align*}
\sup_{i}\sup_{z\in\Om}
    \left(\int_{\Om}\left(\sum_{k=1}^{\infty}
    \abs{\ip{(F(z)-F(\vp_z(u)))e_k}{e_i}_{\h}}
    \right)^pd\sigma(u)\right)^{\frac{1}{p}}<\infty
\end{align*}
for some $p>\frac{4-\kappa}{2-\kappa}$ then $H_{F}$ is bounded.
\end{cor}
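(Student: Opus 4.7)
The plan is to adapt the proof of Theorem \ref{RKT}, exploiting an explicit kernel representation of $H_F$ on $\bo$. For $f$ in the (dense) linear span of normalized reproducing kernels, the reproducing identity yields $F(z)f(z) = \int_{\Om} \ip{\Kw}{\Kz}_{\Bo}\,F(z)\,f(w)\,d\sigma(w)$, while by definition $P(Ff)(z) = \int_{\Om} \ip{\Kw}{\Kz}_{\Bo}\,F(w)\,f(w)\,d\sigma(w)$. Subtracting and using $H_F f = Ff - P(Ff)$ gives
\[
H_F f(z) \;=\; \int_{\Om} \ip{\Kw}{\Kz}_{\Bo}\,(F(z)-F(w))\,f(w)\,d\sigma(w),
\]
realizing $H_F$ as an integral operator from $L^2(\Om,\h;d\sigma)$ into itself whose matrix-valued kernel has $(i,k)$-entry dominated in absolute value by $\ip{M(z,w)e_k}{e_i}_{\h} := \abs{\ip{\Kw}{\Kz}_{\Bo}}\cdot\abs{\ip{(F(z)-F(w))e_k}{e_i}_{\h}}$.

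Mirroring the componentwise reduction already used in the proof of Theorem \ref{RKT}, the quantity $\norm{H_F f}_{L^2(\Om,\h;d\sigma)}^{2}$ is dominated by the squared norm of the positive integral operator with kernel $M$ acting on $\abs{f}$. It therefore suffices to prove that this positive operator is bounded on $L^2(\Om,\h;d\sigma)$, which I would do by applying the Matrix Schur Test (Lemma \ref{MSchur}) with $p=q=2$ and weight $h(z) := \norm{\Kz}_{\Bo}^{\alpha/2}$. The exponent $\alpha$ is chosen exactly as in Theorem \ref{RKT}: $\alpha = 1$ if $\kappa = 0$, and otherwise $\alpha \in (2/p,\,(4-2\kappa)/(4-\kappa))$ with $q(\alpha-2/p) < \kappa$. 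The standing assumption $p > (4-\kappa)/(2-\kappa)$ is precisely what is needed for such an $\alpha$ to exist.

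For the first Schur integral $Q_1(z)$, I would substitute $w = \vp_z(u)$ and invoke the strong-space identities of \hyperref[A5]{A.5} together with the quasi-invariance of $d\la$ from \hyperref[A4]{A.4} to transform the expression into one of the form $\norm{\Kz}_{\Bo}^{\alpha}$ times an integral in $u$ of $\sum_{k} \abs{\ip{(F(z)-F(\vp_z(u)))e_k}{e_i}_{\h}}$ multiplied by an appropriate power of $\abs{\ip{\Kz}{K_u}_{\Bo}}/\norm{K_u}_{\Bo}$. H\"older's inequality in $u$ with conjugate exponents $p$ and $q$ then splits this integrand: the symbol-sum factor is controlled uniformly in $i$ and $z$ by the hypothesis, while the remaining kernel factor is bounded independently of $z$ by Lemma \ref{RF} applied with $r = q(2-\alpha-2/p)$ and $s = r - 2q(1-\alpha)$, which land in the admissible range of \hyperref[A6]{A.6} by the same arithmetic already carried out in Theorem \ref{RKT}. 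This produces $Q_1(z) \lesssim \norm{\Kz}_{\Bo}^{\alpha} = h(z)^{2}$. The second Schur condition follows by the analogous argument with the roles of $(z,w)$ swapped, using $\abs{\ip{(F(z)-F(w))e_k}{e_i}_{\h}} = \abs{\ip{(F(w)-F(z))e_k}{e_i}_{\h}}$ so that the same hypothesis governs the transposed kernel.

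The main obstacle I expect is purely bookkeeping---tracking the Jacobian-like factors in the substitution $w = \vp_z(u)$ as one passes between $d\sigma$ and $d\la$, and confirming that the single ``row-sum over $k$'' hypothesis is strong enough to verify both Schur conditions simultaneously (the second being essentially the row-sum condition for the transposed kernel). All the deeper analytic content, namely the construction of $\alpha$ and the Rudin--Forelli estimate, is already encapsulated in Theorem \ref{RKT}, so no new input is required beyond substituting the concrete kernel $\ip{\Kw}{\Kz}_{\Bo}(F(z)-F(w))$ in place of the abstract kernel $\ip{\Kw e_k}{T^{*}(\Kz e_i)}_{\bo}$.
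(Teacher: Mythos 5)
Your overall strategy coincides with the paper's: both derive the integral-kernel representation $H_F f(z)=\int_{\Om}\ip{K_w}{K_z}_{\Bo}\bigl(F(z)-F(w)\bigr)f(w)\,d\sigma(w)$, pass to the dominating non-negative matrix kernel $\ip{M(z,w)e_k}{e_i}_{\h}=\abs{\ip{(F(z)-F(w))e_k}{e_i}_{\h}}\abs{\ip{K_z}{K_w}_{\Bo}}$, and then apply Lemma~\ref{MSchur} with $h(z)=\norm{K_z}_{\Bo}^{\alpha/2}$ and the same choice of $\alpha$ as in Theorem~\ref{RKT}; H\"older and Lemma~\ref{RF} close the first Schur condition exactly as you describe.

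The one place you should be more careful is the second Schur condition. Since $\ip{M^*(z,w)e_i}{e_k}_{\h}=\ip{M(z,w)e_k}{e_i}_{\h}$, after the change of variable that condition requires control of the \emph{column} sums $\sum_{i}\abs{\ip{(F(w)-F(\vp_w(u)))e_k}{e_i}_{\h}}$ uniformly in $k$, whereas the stated hypothesis controls the \emph{row} sums $\sum_{k}\abs{\ip{(F(z)-F(\vp_z(u)))e_k}{e_i}_{\h}}$ uniformly in $i$. These agree only if the matrix $F(z)-F(w)$ carries suitable symmetry. Your invocation of $\abs{\ip{(F(z)-F(w))e_k}{e_i}_{\h}}=\abs{\ip{(F(w)-F(z))e_k}{e_i}_{\h}}$ is a $(z,w)$-symmetry fact, not an $(i,k)$-symmetry fact, and so does not bridge this. (The paper's own $Q_2$ computation slides past the same point via a relabelling $i\leftrightarrow k$ that silently also permutes the two slots of the inner product.) To make the argument airtight one should either also impose the transposed hypothesis -- the same displayed bound with the roles of $e_i$ and $e_k$ inside the inner product exchanged, equivalently with $F$ replaced by $F^*$ -- or restrict to symbols for which $F(z)-F(w)$ is self-adjoint; in the scalar-valued case the distinction disappears and your argument is complete.
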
 

\begin{proof} The proof is basically the same as for Theorem~\ref{RKT}. 
As in the proof the Theorem~\ref{RKT}, we show that there is a constant such
that 
\begin{align*}
\norm{H_Fg}_{\bo}\lesssim\norm{g}_{\bo}
\end{align*}
for any $g\in\Bo$ that is a linear combination of normalized reproducing
kernels. First, there holds:
\begin{align*}
(H_{F}g)(z)&=F(z)g(z)-P(Fg)(z)
\\&=\int_{\Om}\left(F(z)g(w)-F(w)f(w)\right)\ip{K_w}{K_z}_{\Bo}d\sigma(w)
\\&=\int_{\Om}\sum_{i=1}^{\infty}\sum_{k=1}^{\infty}
    \ip{(F(z)-F(w))e_k}{e_i}_{\h}\ip{K_w}{K_z}_{\Bo}g_k(w)e_id\sigma(w)
\end{align*}
Thus, we want to show that the integral operator with matrix--valued kernel
given by:
\begin{align*}
\ip{M(z,w)e_k}{e_i}_{\h}=\abs{\ip{(F(z)-F(w))e_k}{e_i}_{\h}}
    \abs{\ip{K_z}{K_w}_{\Bo}}
\end{align*}
is bounded. The Matrix Schur's Test, (Lemma \ref{MSchur}), will be used to prove
that the 
operator is bounded with 
\[\ip{M(z,w)e_k}{e_i}_{\h}=\abs{\ip{(F(z)-F(w))e_k}{e_i}_{\h}}
    \abs{\ip{K_z}{K_w}_{\Bo}}, 
\hspace{0.15cm} h(z) \equiv \norm{\Kbz}_{\Bo}^{\al/2}, \]
\[X = \Om, 
\hspace{0.15cm} d\mu(z) =  
d\nu(z) = d\sigma(z).\] 
If $\kappa=0$ set $\al=\frac{4-2\kappa}{4-\kappa}=1$. 
If $\kappa>0$ choose $\alpha\in (\frac{2}{p}, \frac{4-2\kappa}{4-\kappa})$ 
such that $q(\alpha-\frac{2}{p})<\kappa$. The condition $p>\frac{4-\kappa}
{2-\kappa}$ ensures that such $\alpha$ exists. Let $z\in\Om$ be arbitrary and 
fixed. There holds:
\begin{align*}
Q_1
&:=\int_{\Om}\sum_{k=1}^{\infty}\norm{K_w}_{\Bo}^{\al}
    \ip{M(z,w)e_k}{e_i}_{\h}d\sigma(w)
\\&=\int_{\Om}\sum_{k=1}^{\infty}\norm{K_w}_{\Bo}^{\al}
    \abs{\ip{(F(z)-F(w))e_k}{e_i}_{\h}}
    \abs{\ip{K_z}{K_w}_{\Bo}}d\sigma(w)
\\&=\norm{K_z}_{\Bo}\int_{\Om}\sum_{k=1}^{\infty}
    \abs{\ip{(F(z)-F\circ\vp_z(u))e_k}{e_i}_{\h}}
    \abs{\ip{k_z}{k_{\vp_z(u)}}_{\Bo}}
    \norm{K_{\vp_z(u)}}_{\Bo}^{\al-1}d\lambda(u)
\\&\simeq\norm{K_z}_{\Bo}\int_{\Om}\sum_{k=1}^{\infty}
    \abs{\ip{(F(z)-F\circ\vp_z(u))e_k}{e_i}_{\h}}
    \abs{\ip{k_z}{U_z^{*}k_u}_{\Bo}}
    \abs{\ip{k_z}{k_u}_{\Bo}}^{1-\al}d\lambda(u)
\\&=\norm{K_z}_{\Bo}\int_{\Om}\sum_{k=1}^{\infty}
    \abs{\ip{(F(z)-F\circ\vp_z(u))e_k}{e_i}_{\h}}
    \abs{\ip{U_zk_z}{k_u}_{\Bo}}
    \abs{\ip{k_z}{k_u}_{\Bo}}^{1-\al}d\lambda(u)
\\&=\norm{K_z}_{\Bo}^{\al}\int_{\Om}\sum_{k=1}^{\infty}
    \abs{\ip{(F(z)-F\circ\vp_z(u))e_k}{e_i}_{\h}}
    \frac{\abs{\ip{K_z}{K_u}_{\Bo}}^{1-\al}}
    {\norm{K_u}_{\Bo}^{2-\al}}d\lambda(u).
\end{align*}
Using Holder's inequality we obtain that the last expression is no greater than
$$
\norm{\Kbz}_{\Bo}^{\al}
\left(\int_{\Om}\left(\sum_{k=1}^{\infty}
\abs{\ip{(F(z)-F(\vp_z(u)))e_k}{e_i}_{\h}}
\right)^pd\sigma(u)\right)^{\frac{1}{p}}
\left(\int_{\Om}\frac{\abs{\ip{\Kbz}
{K_{u}}}^{q(1-\al)}}{\norm{K_u}^{q\left(2-\al-\frac{2}{p}\right)}}\,
d\la(u)\right)^{\frac{1}{q}}.
$$
Let $r=q\left(2-\al-\frac{2}{p}\right)$ and $s=r-2q(1-\al)$. Then 
$r=\frac{p(2-\al-\frac{2}{p})}{p-1}=2-\frac{\al p}{p-1} > \kappa$ and
$s=q(\al-\frac{2}{p})<\kappa$ when $\kappa>0$ and $s=r>\kappa$ if
$\kappa=0$. This means that both $r$ and $s$ satisfy all condition of 
\hyperref[A6]{A.6}. Thus, by Lemma~\ref{RF}, the second integral is bounded 
independet of $z$. Call this constant $C$. This gives that:
\begin{align*}
Q_1\leq\norm{K_z}_{\Bo}C\sup_{i}\sup_{z\in\Om}
    \left(\int_{\Om}\left(\sum_{k=1}^{\infty}
    \abs{\ip{(F(z)-F(\vp_z(u)))e_k}{e_i}_{\h}}
    \right)^pd\sigma(u)\right)^{\frac{1}{p}}.
\end{align*}
Now we check the second condition in Lemma~\ref{MSchur}.
\begin{align*}
Q_2
&:=\int_{\Om}\sum_{i=1}^{\infty}\norm{K_z}_{\Bo}^{\al}
    \ip{M(z,w)^*e_i}{e_k}_{\h}d\sigma(z)
\\&=\int_{\Om}\sum_{i=1}^{\infty}\norm{K_z}_{\Bo}^{\al}
    \ip{e_i}{M(z,w)e_k}_{\h}d\sigma(z)
\\&=\int_{\Om}\sum_{i=1}^{\infty}\norm{K_z}_{\Bo}^{\al}
    \abs{\ip{(F(z)-F(w))e_k}{e_i}_{\h}}
    \abs{\ip{K_z}{K_w}_{\Bo}}d\sigma(w)
\\&=\int_{\Om}\sum_{i=1}^{\infty}\norm{K_z}_{\Bo}^{\al}
    \abs{\ip{e_k}{(F(z)-F(w)^*e_i}_{\h}}
    \abs{\ip{K_z}{K_w}_{\Bo}}d\sigma(w).
\end{align*}
Using similar arguments as above, we conculde that:
\begin{align*}
Q_2&\leq\norm{K_w}_{\Bo}C\sup_{k}\sup_{z\in\Om}
    \left(\int_{\Om}\left(\sum_{i=1}^{\infty}
    \abs{\ip{(F(z)-F(\vp_z(u)))^*e_i}{e_k}_{\h}}
    \right)^pd\sigma(u)\right)^{\frac{1}{p}}
\\&=\norm{K_w}_{\Bo}C\sup_{k}\sup_{z\in\Om}
    \left(\int_{\Om}\left(\sum_{i=1}^{\infty}
    \abs{\ip{(F(z)-F(\vp_z(u)))e_k}{e_i}_{\h}}
    \right)^pd\sigma(u)\right)^{\frac{1}{p}}
\\&=\norm{K_w}_{\Bo}C\sup_{i}\sup_{z\in\Om}
    \left(\int_{\Om}\left(\sum_{k=1}^{\infty}
    \abs{\ip{(F(z)-F(\vp_z(u)))e_k}{e_i}_{\h}}
    \right)^pd\sigma(u)\right)^{\frac{1}{p}}.
\end{align*}
Thus, by our assumptions and the Matrix Schur's Test (Lemma~\ref{MSchur}),
the operator is bounded.
\end{proof}
 
\section{Reproducing kernel thesis for compactness}\label{RKTComp}

Compact operators on a Hilbert space are exactly the ones which send weakly 
convergent sequences into strongly convergent ones. In the current setting,
there are, in essence, two ``layers'' of compactness that must be satisfied. 
For example, let $\vp$ be a scalar--valued function. Then the Toeplitz operator
$T_{\vp I}$ is not compact on $\bo$ (unless $\vp\equiv 0$) since
the sequence $\{T_{\vp I}e_k\}_{k=1}^{\infty}$ 
does not converge strongly to zero in
$\bo$ but the sequence $\{e_k\}_{k=1}^{\infty}$ 
converges weakly to $0$ in $\bo$. 
On the other hand, if $T_{\vp}$ is compact on $\Bo$, then $T_{\vp 
I^{(d)}}$ is compact for any $d\in\N$. 

The goal of this section is to prove that if $T$ 
satisfies the conditions of Theorem~\ref{RKT} and another condition 
to be stated, and if $T$ sends the weakly null sequence 
$\{k_ze\}_{z\in\Om}$ (see Lemma~\ref{compact} below) into 
a strongly null sequence $\{Tk_ze\}_{z\in\Om}$, then $T$ must be compact.

Recall that the essential norm of a bounded linear operator $S$ on $\bo$ is 
given by 
$$
\norm{S}_e=\inf\left\{\norm{S-A}_{\l(\bo)}: 
A\textnormal{ is compact on } \bo\right\}.
$$
We first show two simple results that will be used in the course of the proofs.

\begin{lm}\label{compact} The weak limit of $k_ze$ is zero as 
$\m(z,0)\to\infty$. 
\end{lm}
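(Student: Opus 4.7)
The plan is to verify weak convergence by testing against a dense subset of $\bo$, then to reduce the computation to a single scalar quantity controlled by axiom A.5 and A.7. By the Corollary stated above, the set of linear combinations of reproducing kernels (i.e.\ finite sums $\sum_j k_{w_j} h_j$ with $h_j\in\h$ finite) is dense in $\bo$, so it is enough to show that $\ip{k_z e}{k_w h}_{\bo}\to 0$ as $\m(z,0)\to\infty$ for each fixed $w\in\Om$ and each finite $h\in\h$, together with a uniform bound on $\norm{k_ze}_{\bo}$.

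The uniform bound is immediate: since $k_z$ is scalar--valued,
\[
\norm{k_ze}_{\bo}^{2}=\int_{\Om}\abs{k_z(u)}^{2}\norm{e}_{\h}^{2}\,d\sigma(u)=\norm{k_z}_{\Bo}^{2}\norm{e}_{\h}^{2}=\norm{e}_{\h}^{2}.
\]
For the pairing against a reproducing kernel, the inner product factors:
\[
\ip{k_ze}{k_wh}_{\bo}=\int_{\Om}k_z(u)\overline{k_w(u)}\,d\sigma(u)\cdot\ip{e}{h}_{\h}=\ip{k_z}{k_w}_{\Bo}\ip{e}{h}_{\h}.
\]
Thus I must show $\ip{k_z}{k_w}_{\Bo}\to 0$ for fixed $w$ as $\m(z,0)\to\infty$.

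By A.5, $\abs{\ip{k_z}{k_w}_{\Bo}}\simeq \norm{K_{\vp_z(w)}}_{\Bo}^{-1}$, so in view of A.7 it suffices to check that $\m(\vp_z(w),0)\to\infty$ as $\m(z,0)\to\infty$ for fixed $w$. Using $\vp_z(0)=z$ and the quasi--invariance of $\m$ under $\vp_z$ from A.2,
\[
\m(\vp_z(w),z)=\m(\vp_z(w),\vp_z(0))\simeq \m(w,0),
\]
which is a constant in $z$. The triangle inequality then gives $\m(\vp_z(w),0)\geq \m(z,0)-\m(\vp_z(w),z)\to\infty$, and A.7 yields $\norm{K_{\vp_z(w)}}_{\Bo}\to\infty$. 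Hence $\ip{k_ze}{k_wh}_{\bo}\to 0$.

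Finally, given an arbitrary $f\in\bo$ and $\varepsilon>0$, by density choose a finite linear combination $g=\sum_{j=1}^{m}k_{w_j}h_j$ with $\norm{f-g}_{\bo}<\varepsilon$. Then
\[
\abs{\ip{k_ze}{f}_{\bo}}\leq \abs{\ip{k_ze}{g}_{\bo}}+\norm{k_ze}_{\bo}\norm{f-g}_{\bo}\leq \sum_{j=1}^{m}\abs{\ip{k_ze}{k_{w_j}h_j}_{\bo}}+\norm{e}_{\h}\varepsilon.
\]
The first sum tends to $0$ by the argument above (only finitely many terms), so $\limsup_{\m(z,0)\to\infty}\abs{\ip{k_ze}{f}_{\bo}}\leq \norm{e}_{\h}\varepsilon$, and letting $\varepsilon\to 0$ completes the proof. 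The only subtle point is the metric estimate $\m(\vp_z(w),0)\to\infty$, which is handled cleanly by combining quasi--invariance (A.2) with A.7; everything else is a routine density argument.
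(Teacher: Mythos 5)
Your proof is correct and follows essentially the same route as the paper: test against the dense family of reproducing kernels, factor the $\bo$ inner product as $\ip{k_z}{k_w}_{\Bo}\ip{e}{h}_{\h}$, invoke A.5 together with the metric quasi-invariance from A.2 to see that $\norm{K_{\vp_z(w)}}_{\Bo}\to\infty$ via A.7, and close with a routine $\varepsilon$-density argument. The paper compresses the density step and the metric estimate into one sentence each, but the underlying argument is identical; your version simply spells out the uniform bound $\norm{k_ze}_{\bo}=\norm{e}_{\h}$ and the triangle-inequality estimate on $\m(\vp_z(w),0)$ explicitly.
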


\begin{proof} Note first that property \hyperref[A2]{A.2} implies that if 
$\m(z,0)\to\infty$ then $\m(\vp_w(z),0)\to \infty$. Properties 
\hyperref[A5]{A.5} and \hyperref[A7]{A.7} now immediately imply that 
$\ip{k_we}{k_zh}_{\bo}\to 0$ as $\m(z,0)\to\infty$. The fact that the set
$\{k_ze_i:z\in\Om, i\in\N\}$ is dense in $\bo$ then implies $k_ze$ converges 
weakly to $0$ as $\m(z,0)\to\infty$. 
\end{proof}

\begin{lm} 
\label{lm:Compactsaregood}
For any compact operator $A$ and any $f\in\bo$ we have that 
$\norm{A^zf}_{\bo}\to 0$ as $\m(z,0)\to\infty$.
\end{lm}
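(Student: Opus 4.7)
The plan is to reduce the statement to the assertion that $U_z^* f$ converges weakly to $0$ in $\bo$ as $\m(z,0) \to \infty$. Granting this, compactness of $A$ promotes weak convergence to strong convergence, so $\|A U_z^* f\|_\bo \to 0$, and the quasi-isometry $\|U_z h\|_\bo \simeq \|h\|_\bo$ then yields
\[
\|A^z f\|_\bo = \|U_z A U_z^* f\|_\bo \simeq \|A U_z^* f\|_\bo \longrightarrow 0,
\]
as desired.

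To establish the weak-null claim I would use a double density argument. The family $\{U_z^* f\}_{z\in\Om}$ is uniformly bounded since $\|U_z^* h\|_\bo \simeq \|h\|_\bo$ (established before the lemma statement). By the Corollary on density of linear combinations of normalized reproducing kernels, and the standard perturbation estimate, it suffices to check $\ip{U_z^* f}{k_w e}_\bo \to 0$ for each fixed $w \in \Om$ and finite $e \in \h$. Rewriting via duality as $\ip{U_z^* f}{k_w e}_\bo = \ip{f}{U_z(k_w e)}_\bo$ and using that $\{U_z(k_w e)\}_z$ is likewise uniformly bounded, the same density argument in $f$ reduces matters to the test pair $f = k_v a$, $g = k_w e$.

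For such a pair, the explicit formula $(U_z(k_w e))(u) = k_w(\vp_z(u)) k_z(u) e$ factors the inner product over the scalar Bergman space:
\[
\ip{k_v a}{U_z(k_w e)}_\bo \;=\; \ip{a}{e}_\h \cdot \ip{k_v}{U_z k_w}_\Bo.
\]
Applying the scalar reproducing property in $\Bo$ together with the algebraic identity $|k_z(v)|/\|K_v\|_\Bo = |\ip{k_v}{k_z}_\Bo|$ yields
\[
|\ip{k_v}{U_z k_w}_\Bo| \;=\; \|K_{\vp_z(v)}\|_\Bo \cdot |\ip{k_{\vp_z(v)}}{k_w}_\Bo| \cdot |\ip{k_v}{k_z}_\Bo|.
\]
Two applications of property A.5 (and the symmetry of the modulus $|\ip{k_z}{k_w}_\Bo|$, which forces $\|K_{\vp_v(z)}\|_\Bo \simeq \|K_{\vp_z(v)}\|_\Bo$ and $\|K_{\vp_{\vp_z(v)}(w)}\|_\Bo \simeq \|K_{\vp_w(\vp_z(v))}\|_\Bo$) collapse this to
\[
|\ip{k_v}{U_z k_w}_\Bo| \;\simeq\; \frac{1}{\|K_{\vp_w(\vp_z(v))}\|_\Bo}.
\]

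To finish, I would use the quasi-invariance A.2 twice — first under $\vp_z$ with the points $v$ and $0$, giving $\m(\vp_z(v), z) \simeq \m(v,0)$ and hence $\m(\vp_z(v),0) \to \infty$ by the triangle inequality, then under $\vp_w$ with the points $\vp_z(v)$ and $0$, giving $\m(\vp_w(\vp_z(v)), w) \simeq \m(\vp_z(v),0)$ and hence $\m(\vp_w(\vp_z(v)),0) \to \infty$. Property A.7 then forces $\|K_{\vp_w(\vp_z(v))}\|_\Bo \to \infty$, completing the proof. The main obstacle is the careful bookkeeping through the double composition $\vp_w\circ\vp_z$: outside the strong setting, assertion (c) of the earlier translation lemma gives only pointwise modulus equivalence between $U_z^* k_w$ and $k_{\vp_z(w)}$, which is too weak to conclude weak convergence directly via Lemma \ref{compact}, so the explicit detour through the scalar-valued space and the twofold A.5 estimate appears to be the natural route.
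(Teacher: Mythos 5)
Your proof is correct, and it does take a genuinely more computational route than the paper's. The paper reduces to $f=k_we$ by the same density/uniform-boundedness mechanism you use at the outer layer, then simply writes $\norm{A^zk_we}_{\bo}\simeq\norm{U_zAk_{\vp_z(w)}e}_{\bo}\simeq\norm{Ak_{\vp_z(w)}e}_{\bo}$ and finishes by citing Lemma~\ref{compact} (which already records that $k_ue$ is weakly null as $\m(u,0)\to\infty$, together with the metric fact that $\m(\vp_z(w),0)\to\infty$). You deliberately avoid the substitution $U_z^*(k_we)\leadsto k_{\vp_z(w)}e$, reasoning that the stated wording of assertion~(c) gives only a pointwise modulus equivalence, which indeed would not by itself justify swapping one function for the other inside a general compact $A$; instead you re-derive the weak-nullity from the explicit kernel pairing, two uses of A.5, the metric estimates from A.2, and A.7 — all of which you carry out correctly.

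The caution is, however, unnecessary: in any RKHS the adjoint $U_z^*$ sends a reproducing kernel to an \emph{exact} scalar multiple of a reproducing kernel, since
\[
(U_z^*K_w)(u)=\ip{U_z^*K_w}{K_u}_{\Bo}=\ip{K_w}{U_zK_u}_{\Bo}=\overline{(U_zK_u)(w)}=\overline{k_z(w)}\,K_{\vp_z(w)}(u),
\]
so $U_z^*(k_we)=c(z,w)\,k_{\vp_z(w)}e$ with $c(z,w)=\overline{k_z(w)}\norm{K_{\vp_z(w)}}_{\Bo}/\norm{K_w}_{\Bo}$, and $\abs{c(z,w)}\simeq 1$ by exactly the A.5 identity you invoke. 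This is the fact the paper is implicitly relying on (and states explicitly, a bit later, in the proof of Theorem~\ref{RKTC}); with it the two-line argument works even outside the strong setting. So both routes are sound: yours makes the hidden mechanism explicit at the price of an extra A.5 layer, while the paper's is terser because it trades on the scalar-multiple relation.
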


\begin{proof} If $e\in\h$ and $f=k_we$ then using the 
previous lemma we obtain that 
$\norm{A^zk_we}_{\bo}\simeq 
\norm{U_zAk_{\vp_z(w)}e}_{\bo}\to 0$ as $\m(z,0)\to\infty$. 
For the
general case, choose $f\in\bo$ arbitrary of norm $1$.  We can approximate $f$ by
linear combinations of normalized reproducing kernels
and in a standard way we can deduce the same result. 
\end{proof}

The following localization property will be a crucial step towards estimating 
the essential norm. A version of this result in the classical Bergman space 
setting was first proved by Su\'arez in~\cite{Sua}. Related results were later 
given in~\cites{MW,MSW,BI,RW}.

\begin{prop}\label{MainEst1} Let $T:\bo\to\bo$ be a linear operator and $\kappa$
be the constant from \hyperref[A6]{A.6}.
If
\begin{align*} 
\sup_{i}\sup_{z\in\Om}
\lp\int_{\Om}
    \lp\sum_{k=1}^{\infty}\abs{\ip{U_zT^*k_ze_i(u)}{e_k}_{\h}}\rp^{p}
    d\sigma(u)\rp^{\frac{1}{p}}
<\infty
\end{align*}
and
\begin{align*} 
\sup_{k}\sup_{z\in\Om}
\lp\int_{\Om}
    \lp\sum_{i=1}^{\infty}\abs{\ip{U_zTk_ze_k(u)}{e_i}_{\h}}\rp^{p}
    d\sigma(u)\rp^{\frac{1}{p}}
<\infty
\end{align*}
for some $p>\frac{4-\kappa}{2-\kappa}$, then for every 
$\epsilon > 0$ there exists $r>0$ such that for the covering $\FF_r=\{F_j\}_{j=1}^{\infty}$ 
(associated to $r$) from Proposition~\ref{Covering}  
\begin{eqnarray*} \norm{ T-\sum_{j}M_{1_{F_j} }
TPM_{1_{G_j} }}_{\l(\Om,\h;d\sigma)} < \epsilon.
\end{eqnarray*}
\end{prop}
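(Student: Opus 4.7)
The plan is to reduce the statement to a truncated Matrix Schur argument of the same shape as in the proof of Theorem~\ref{RKT}, where the kernel of the difference operator is supported away from the diagonal.

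First I would identify the kernel. For any $f\in\bo$, the identity $f=Pf$ together with the decomposition $1=1_{G_j}+1_{G_j^c}$ yields $Tf=TP(1_{G_j}f)+TP(1_{G_j^c}f)$ for each $j$. Since $\{F_j\}$ partitions $\Om$ a.e., this gives the pointwise identity
\begin{align*}
\left(T-\sum_j M_{1_{F_j}}TPM_{1_{G_j}}\right)f=\sum_j 1_{F_j}\, TP(1_{G_j^c}f).
\end{align*}
Because $F_j\subset G_j=\{z:\m(z,F_j)\leq r\}$, whenever $z\in F_j$ and $w\in G_j^c$ there holds $\m(z,w)\geq \m(w,F_j)>r$. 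Viewing $TP$ as the integral operator with matrix-valued kernel $M(z,w)$ from the proof of Theorem~\ref{RKT}, the difference operator is therefore dominated entrywise by the operator whose kernel is $M(z,w)\,1_{\{\m(z,w)>r\}}$.

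Next I would reapply the Matrix Schur's Test (Lemma~\ref{MSchur}) to this truncated kernel, with the same weight $h(z)=\norm{K_z}_{\Bo}^{\al/2}$ and the same choice of $\al$ (depending on $\kappa$ and $p$) as in Theorem~\ref{RKT}. The hypotheses on $T$ and $T^*$ give a uniform bound on the H\"older factor involving $U_zT^*(k_ze_i)$ and $U_zT(k_ze_k)$, exactly as before. The only change is that the Rudin--Forelli factor is replaced by the truncated integral
\begin{align*}
I_r(z):=\int_{\m(z,w)>r}
\frac{\abs{\ip{K_z}{K_w}_{\Bo}}^{q(1-\al)}}
{\norm{K_w}_{\Bo}^{q(2-\al-2/p)}}\,d\la(w),
\end{align*}
so the resulting Schur bound is a constant multiple of a positive power of $\sup_z I_r(z)$.

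The principal obstacle is therefore to verify that $\sup_{z\in\Om} I_r(z)\to 0$ as $r\to\infty$. Under the change of variables $w=\vp_z(u)$, the quasi-invariance of $\la$ (A.4) gives $d\la(w)\simeq d\la(u)$; the symmetry A.5 rewrites $|\ip{K_z}{K_{\vp_z(u)}}_{\Bo}|\simeq \norm{K_z}_{\Bo}\norm{K_{\vp_z(u)}}_{\Bo}/\norm{K_u}_{\Bo}$; and the quasi-invariance A.2 of $\m$ converts the truncation $\m(z,w)>r$ into $\m(0,u)\gtrsim r$. Together with the uniform integrability supplied by Lemma~\ref{RF}, this change of variables produces a $z$-independent dominating integrable expression, and dominated convergence yields the required uniform decay. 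Choosing $r$ large enough that the truncated Schur bound is below $\epsilon$ then completes the proof.
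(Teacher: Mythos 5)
You correctly identify the kernel of the difference operator and correctly reduce the problem to a Schur-type estimate for the off-diagonal kernel $M(z,w)\,1_{\{\m(z,w)>r\}}$. The gap is in the decay mechanism: you route all the smallness through the truncated Rudin--Forelli integral
\begin{align*}
I_r(z)=\int_{\m(z,w)>r}\frac{\abs{\ip{K_z}{K_w}_{\Bo}}^{q(1-\al)}}{\norm{K_w}_{\Bo}^{q(2-\al-2/p)}}\,d\la(w),
\end{align*}
claiming that after the change of variables $w=\vp_z(u)$ the integrand becomes $z$-independent, so that dominated convergence gives $\sup_z I_r(z)\to 0$. This claim is false. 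Carrying out the substitution using \hyperref[A4]{A.4}, \hyperref[A5]{A.5} and \hyperref[A2]{A.2} as you propose gives an integrand $\simeq \norm{K_z}_{\Bo}^{q(2/p-\al)}\abs{\ip{K_z}{K_u}_{\Bo}}^{q(1-2/p)}/\norm{K_u}_{\Bo}^{q(2-\al-2/p)}$, which is still genuinely $z$-dependent. Lemma~\ref{RF} together with \hyperref[A6]{A.6} asserts only that the \emph{full} integral over $\Om$ of this quantity is bounded uniformly in $z$; uniform boundedness of a family of integrals does not imply that their tails over $D(0,r')^c$ decay uniformly, and nothing in A.1--A.7 supplies such tail control.

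The paper's proof avoids this entirely by placing the truncation on the \emph{operator} factor instead of the Rudin--Forelli factor, and this requires a device your proposal omits: an intermediate exponent $p_0$ with $\frac{2}{p}<\frac{2}{p_0}<\frac{4-2\kappa}{4-\kappa}$. After the change of variables one applies H\"older with exponents $p_0,q_0$; the $q_0$-factor is the full (untruncated) Rudin--Forelli integral, bounded by a constant via Lemma~\ref{RF}, while the $p_0$-factor is still an integral over $D(0,r)^c$. One then applies H\"older \emph{again} to that factor, with exponents $p/p_0$ and its conjugate, to peel off $\sigma(D(0,r)^c)^{\gamma}$ with $\gamma=1/(p_0p')>0$ and leave exactly the quantity controlled by the hypothesis at exponent $p$. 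Since $\sigma$ is finite and closed $\m$-balls exhaust $\Om$, $\sigma(D(0,r)^c)\to 0$, and this is the actual source of the $o(1)$. With a single H\"older application at $p,q$ as you propose, there is no room to extract this smallness, so the argument as written does not close.
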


\begin{proof}
Let $r>0$ and let $\{F_j\}_{j=1}^{\infty}$ and $\{G_j\}_{j=1}^{\infty}$ be the sets 
from 
Proposition~\ref{Covering} for this value of $r$. 
Let $f\in\bo$ have norm at most $1$ there holds:
\begin{align*}
(Tf)(z)&-\sum_{j=1}^{\infty}(M_{1_{F_{j}}}TPM_{1_{G_{j}}}f)(z)
    =\sum_{j=1}^{\infty}M_{1_{F_{j}}}\left(Tf-TPM_{1_{G_{j}}}f\right)(z)
\\&=\sum_{j=1}^{\infty}\sum_{i=1}^{\infty}1_{F_{j}}(z)
\ip{Tf-T1_{G_{j}}f}{K_ze_i}_{\bo}e_i
\\&=\sum_{j=1}^{\infty}\sum_{i=1}^{\infty}1_{F_{j}}(z)
\ip{f-1_{G_{j}}f}{T^{*}K_ze_i}_{\bo}e_i
\\&=\sum_{j=1}^{\infty}\sum_{i=1}^{\infty}1_{F_{j}}(z)
\ip{1_{G_{j}^c}f}{T^{*}K_ze_i}_{\bo}e_i
\\&=\sum_{j=1}^{\infty}\sum_{i=1}^{\infty}\int_{\Om}
1_{F_{j}}(z)1_{G_{j}^c}(w)\ip{f(w)}{T^{*}(K_ze_i)(w)}_{\h}d\sigma(w)e_i
\\&=\sum_{j=1}^{\infty}\sum_{i=1}^{\infty}\int_{\Om}
1_{F_{j}}(z)1_{G_{j}^c}(w)\sum_{k=1}^{\infty}f_k(w)
\ip{e_k}{T^{*}(K_ze_i)(w)}_{\h}d\sigma(w)e_i
\\&=\sum_{j=1}^{\infty}\sum_{i=1}^{\infty}\int_{\Om}
1_{F_{j}}(z)1_{G_{j}^c}(w)\sum_{k=1}^{\infty}f_k(w)
\ip{K_we_k}{T^{*}(K_ze_i)}_{\bo}d\sigma(w)e_i.
\end{align*}
Thus, we want to show that the integral operator with kernel given by:
\begin{align*}
\ip{M_r(z,w)e_k}{e_i}_{\h}
=\sum_{j=1}^{\infty}1_{F_{j}}(z)1_{G_{j}^c}(w)\abs{\ip{T^*K_ze_i}{K_we_k}_{\bo}}
\end{align*}
is bounded and that the operator norm goes to zero as $r\to\infty$.

Again we will use the Matrix Schur's Test (Lemma~\ref{MSchur}) with
\[\ip{M_r(z,w)e_k}{e_i}_{\h}=
\sum_{j=1}^{\infty}1_{F_{j}}(z)1_{G_{j}^c}(w)
\abs{\ip{T^*K_ze_i}{K_we_k}_{\bo}}, 
\hspace{0.15cm} h(z) \equiv \norm{\Kbz}_{\Bo}^{\al/2},\]
\[X = \Om, 
\hspace{0.15cm} d\mu(z) =  d\nu(z) = d\sigma(z).\] If $\kappa=0$ then set 
$\alpha=\frac{4-2\kappa}{4-\kappa}=1$. If $\kappa>0$ first choose $p_0$ such 
that $\frac{2}{p}<\frac{2}{p_0}<\frac{4-2\kappa}{4-\kappa}$ and denote by $q_0$ 
the conjugate of $p_0$, $q_0=\frac{p_0}{p_0-1}$. Then choose 
$\alpha\in (\frac{2}{p_0}, \frac{4-2\kappa}{4-\kappa})$ such that 
$q_0(\alpha-\frac{2}{p_0})<\kappa$. The condition $p>\frac{4-\kappa}{2-\kappa}$ 
ensures that such $p_0$ and $\alpha$ exist. 
Let $z\in\Om$ be arbitrary and fixed. Since $\{F_j\}_{j=1}^{\infty}$ 
forms a covering for $\Om$ there exists a unique $j$ such that $z\in F_j$. 
Note also that $D(z,r)\subset G_j$ so $G_j^{c}\subset D(z,r)^{c}$. There holds:
\begin{align*}
Q_1(r)
&:=\int_{\Om}\norm{K_w}_{\Bo}^{\al}\sum_{k=1}^{\infty}
    \ip{M_r(x,y)e_k}{e_i}_{\h}d\sigma(w)
\\&=\int_{\Om}\norm{K_w}_{\Bo}^{\al}\sum_{k=1}^{\infty}
   \sum_{j=1}^{\infty}1_{F_{j}}(z)1_{G_{j}^c}(w)
   \abs{\ip{T^*K_ze_i}{K_we_k}_{\bo}}d\sigma(w)
\\&=\int_{G_{j}^c}\norm{K_w}_{\Bo}^{\al}1_{F_{j}}(z)\sum_{k=1}^{\infty}
   \abs{\ip{T^*K_ze_i}{K_we_k}_{\bo}}d\sigma(w)
\\&\leq\int_{D(z,r)^c}\norm{K_w}_{\Bo}^{\al}1_{F_{j}}(z)\sum_{k=1}^{\infty}
   \abs{\ip{T^*K_ze_i}{K_we_k}_{\bo}}d\sigma(w)
\\&=\norm{K_z}_{\Bo}\int_{D(0,r)^c}\sum_{k=1}^{\infty}
   \abs{\ip{T^*k_ze_i}{k_{\vp_z(u)}e_k}_{\bo}}
   \norm{K_{\vp_z(u)}}_{\Bo-1}^{\al-1}d\lambda(u)
\\&\simeq\norm{K_z}_{\Bo}\int_{D(0,r)^c}\sum_{k=1}^{\infty}
   \abs{\ip{T^*U_z^*k_0e_i}{U_z^*k_{u}e_k}_{\bo}}
   \abs{\ip{k_z}{k_u}_{\Bo}}^{1-\al}d\lambda(u)
\\&=\norm{K_z}_{\Bo}\int_{D(0,r)^c}\sum_{k=1}^{\infty}
   \abs{\ip{T^{*z}k_0e_i}{k_{u}e_k}_{\bo}}
   \abs{\ip{k_z}{k_u}_{\Bo}}^{1-\al}d\lambda(u)
\\&=\norm{K_z}_{\Bo}^{\al}\int_{D(0,r)^c}\sum_{k=1}^{\infty}
   \abs{\ip{T^{*z}(k_0e_i)(u)}{e_k}_{\h}}
   \frac{\abs{\ip{K_z}{K_u}_{\Bo}}^{1-\al}}{\norm{K_u}^{2-\al}}d\lambda(u).
\end{align*}
Using H\"older's inequality we obtain that the last expression is no greater 
than
$$
\norm{\Kbz}_{\Bo}^{\al}
\lp\int_{D(0,r)^{c}}
    \lp\sum_{k=1}^{\infty}\abs{\ip{T^{*z}k_0e_i(u)}{e_k}_{\h}}\rp^{p_0}
    d\sigma(u)\rp^{\frac{1}{p_0}}
\left(\int_{\Om}\frac{\abs{\ip{\Kbz}{K_{u}}_{\Bo}}^{q_0(1-\al)}}
    {\norm{K_u}_{\Bo}^{q_0\left(2-\al-\frac{2}{p_0}\right)}}\,
    d\la(u)\right)^{\frac{1}{q_0}}.
$$
Let $r=q_0\left(2-\al-\frac{2}{p_0}\right)$ and $s=r-2q_0(1-\al)$. Then 
$r=\frac{p_0(2-\al-\frac{2}{p_0})}{p_0-1}=2-\frac{\al p_0}{p_0-1} > \kappa$ and
$s=q_0(\al-\frac{2}{p_0})<\kappa$ when $\kappa>0$ and $s=r>\kappa$ if
$\kappa=0$. This means that both $r$ and $s$ satisfy all conditions of 
\hyperref[A6]{A.6}. Thus, by Lemma~\ref{RF}, the second integral is bounded 
independent of $z$. Call this constant $C$. 

For the first integral, note that 
$\abs{\ip{T^{*z}k_0e_i(u)}{e_k}_{\h}}\simeq 
\abs{\ip{U_zT^*k_ze_i(u)}{e_k}_{\h}}$. Then using H\"{o}lder's Inequality 
there holds:
\begin{align*}
Q_1(r)
&\lesssim
    \norm{K_z}_{\Bo}^{\al}C
    \sigma(D(0,r)^c)^{\gamma}
    \lp\int_{\Om}
    \lp\sum_{k=1}^{\infty}\abs{\ip{U_zT^*k_ze_i(u)}{e_k}_{\h}}\rp^{p}
    d\sigma(u)\rp^{\frac{1}{p}}.
\end{align*}
Where $\gamma=1/p_0p'$ and $p'$ is conjugate exponent to $p$. 
Since $\sigma$ is a finite measure and since 
\begin{align*}
\sup_{i}\sup_{z\in\Om}
    \lp\int_{\Om}
    \lp\sum_{k=1}^{\infty}\abs{\ip{U_zT^*k_ze_i(u)}{e_k}_{\h}}\rp^{p}
    d\sigma(u)\rp^{\frac{1}{p}}
<\infty,
\end{align*}
$Q_1(r)$ goes to $0$ as $r\to\infty$. Thus, the first condition of 
Lemma~\ref{MSchur} is satisfied with a constant $o(1)$ as $r\to\infty$.

Next, we check the second condition. Fix $w\in\Om$. Let $J$ be a subset of all 
indices  $j$ such that $w\notin G_j$. If $z\in F_j$ for some $j\in J$, then
since $w\notin G_j$, there holds that $\m(w,F_j)>r$ and therefore 
$z$ is not in $D(w,r)$. Thus, $\cup_{j\in J}F_j \subset D(w,r)^{c}$ and 
consequently 
\begin{align*}
Q_2(r)
&:=\int_{\Om}\sum_{i=1}^{\infty}
    \norm{K_z}_{\Bo}\ip{M_r^*(z,w)e_i}{e_k}_{\h}d\sigma(z)
\\&=\int_{\Om}\sum_{i=1}^{\infty}
    \norm{K_z}_{\Bo}\ip{e_i}{M_r(z,w)e_k}_{\h}d\sigma(z)
\\&=\int_{\Om}\sum_{i=1}^{\infty}\norm{K_z}_{\Bo}
    \sum_{j=1}^{\infty}1_{F_{j}}(z)1_{G_{j}^c}(w)
    \abs{\ip{T^*K_ze_i}{K_we_k}_{\bo}}d\sigma(z)
\\&=\int_{\Om}\norm{K_z}_{\Bo}\sum_{i=1}^{\infty}
    \sum_{j=1}^{\infty}1_{F_{j}}(z)1_{G_{j}^c}(w)
    \abs{\ip{K_ze_i}{TK_we_k}_{\bo}}d\sigma(z)
\\&=\int_{\cup_{j\in J}F_j}\norm{K_z}_{\Bo}\sum_{i=1}^{\infty}
    \abs{\ip{K_ze_i}{TK_we_k}_{\bo}}d\sigma(z)
\\&\leq\int_{D(w,r)^{c}}\norm{K_z}_{\Bo}\sum_{i=1}^{\infty}
    \abs{\ip{K_ze_i}{TK_we_k}_{\bo}}d\sigma(z).
\end{align*}
Now, using the same estimates as above, but interchanging roles of 
$T$ and $T^*$ and $i$ and $k$ there holds:
\begin{align*}
Q_2(r)
&\lesssim
    \norm{K_w}_{\Bo}^{\al}C
    \sigma(D(0,r)^c)^{\gamma}
    \lp\int_{\Om}
    \lp\sum_{i=1}^{\infty}\abs{\ip{U_zTk_ze_k(u)}{e_i}_{\h}}\rp^{p}
    d\sigma(u)\rp^{\frac{1}{p}}.
\end{align*}
Thus, as before, $Q_2(r)$ goes to zero as $r\to\infty$. 

So both conditions of Matrix Schur's Test (Lemma~\ref{MSchur}) are satisfied
with constants that go to zero as $r\to\infty$. Thus, by choosing $r$ large 
enough, the integral operator with kernel given by $M_r(z,w)$ has operator
norm less than $\epsilon$. If $\{F_j\}_{j=1}^{\infty}$ and 
$\{G_j\}_{j=1}^{\infty}$ are the sets from
Proposition~\ref{Covering} associated to this valued of $r$, 
this also implies that:
\begin{eqnarray*} \norm{ T-\sum_{j=1}^{\infty}M_{1_{F_j} }
TPM_{1_{G_j} }}_{\l(\Om,\h;d\sigma)} < \epsilon.
\end{eqnarray*}
This proves the proposition for $\kappa>0$. When $\kappa=0$, the 
Proposition can be proven by making adaptations as in the proof of
Theorem ~\ref{RKT}.
\end{proof}

We now come to the main results of the section. 

\begin{thm}\label{RKTC} Let $T:\bo\to\bo$ be a linear operator and $\kappa$ be 
the constant from \hyperref[A6]{A.6}.
If
\begin{align}\label{e1}
\sup_{i}\sup_{z\in\Om}
\lp\int_{\Om}
    \lp\sum_{k=1}^{\infty}\abs{\ip{U_zT^*k_ze_i(u)}{e_k}_{\h}}\rp^{p}
    d\sigma(u)\rp^{\frac{1}{p}}
<\infty
\end{align}
and
\begin{align}\label{e2}
\sup_{k}\sup_{z\in\Om}
\lp\int_{\Om}
    \lp\sum_{i=1}^{\infty}\abs{\ip{U_zTk_ze_k(u)}{e_i}_{\h}}\rp^{p}
    d\sigma(u)\rp^{\frac{1}{p}}
<\infty
\end{align}
for some $p>\frac{4-\kappa}{2-\kappa}$,
and 
\begin{align}\label{smest}
\limsup_{d\to\infty}\norm{T M_{I_{(d)}}}_{\BL} = 0
\end{align}
then
\begin{itemize}
\item[(a)] $ \|T\|_e\simeq \sup_{\norm{f} \leq 1}
\limsup_{\m(z,0)\to\infty} \norm{T^zf}_{\bo}.$
\item[(b)] If $\sup_{e\in\Cd,\norm{e}_{\Cd}=1}
\lim_{\m(z,0)\to\infty}\norm{Tk_ze}_{\Bo}=0$ 
then $T$ must be compact.
\end{itemize}
\end{thm}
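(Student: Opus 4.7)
The plan is to define $B:=\sup_{\|f\|_{\bo}\leq 1}\limsup_{\m(z,0)\to\infty}\|T^zf\|_{\bo}$ and to establish (a) by proving the two-sided estimate $B\lesssim\|T\|_e\lesssim B$, then to deduce (b) by showing the hypothesis forces $B=0$. For the easy inequality $B\lesssim\|T\|_e$, note that if $A$ is any compact operator on $\bo$, Lemma~\ref{lm:Compactsaregood} gives $\|A^zf\|_{\bo}\to 0$ as $\m(z,0)\to\infty$ for every $f$, and since $\|U_z\|\simeq\|U_z^{-1}\|\simeq\|U_z^*\|\simeq 1$ we obtain $\|T^zf\|_{\bo}\leq\|(T-A)^zf\|_{\bo}+\|A^zf\|_{\bo}\lesssim\|T-A\|_{\l(\bo)}+o(1)$. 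Taking the supremum over unit $f$ and then the infimum over compact $A$ yields the claim.

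For the harder inequality $\|T\|_e\lesssim B$, fix $\epsilon>0$ and proceed in three stages. First, apply Proposition~\ref{MainEst1} (whose hypotheses are precisely \eqref{e1}, \eqref{e2}) to choose $r>0$ and an associated covering $\{F_j\},\{G_j\}$ such that $\|T-S_r\|_{\l(\bo)}<\epsilon$, where $S_r=\sum_jM_{1_{F_j}}TPM_{1_{G_j}}$. Second, since $M_{I^{(d)}}$ commutes with both $P$ and multiplication by $1_{G_j}$, the disjointness of the $F_j$ together with the bounded overlap of the $G_j$ from Proposition~\ref{Covering} gives $\|S_rM_{I_{(d)}}\|\leq\sqrt{N}\,\|TM_{I_{(d)}}\|$, which by hypothesis~\eqref{smest} can be made less than $\epsilon$ by taking $d$ large. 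Third, split $S_rM_{I^{(d)}}=\Sigma_{\rm close}+\Sigma_{\rm far}$ at a large radius $R$ by collecting indices $j$ with $F_j\subset D(0,R)$ and its complement. Finite compactness of closed $\m$-balls and the disjointness of the $F_j$ ensure $\Sigma_{\rm close}$ has only finitely many summands, each compact because $T_{1_{G_j}}M_{I^{(d)}}$ is compact by Lemma~\ref{ToeplitzCompact}. For $\Sigma_{\rm far}$, pick $z_j\in F_j$ so that $\m(z_j,0)\to\infty$ as $R\to\infty$, and use the conjugation identity relating $T$ to $T^{z_j}$ via $U_{z_j}$, combined with the disjointness of the $F_j$ and the bounded overlap of the $G_j$, to conclude $\|\Sigma_{\rm far}\|\lesssim B+o_R(1)$. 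Sending $\epsilon\to 0$ and $R\to\infty$ delivers (a).

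For (b), by (a) it suffices to show $B=0$. The corollary following the first lemma of Section~\ref{vvbts} makes finite linear combinations of normalized reproducing kernels dense in $\bo$. For $f=\sum_{l=1}^{L}k_{w_l}h_l$ with each $h_l$ a finite element of $\h$, equation~\eqref{trans} (in the strong case, and its quasi-version in general) yields $U_z^*(k_{w_l}h_l)\simeq k_{\vp_z(w_l)}h_l$ in $\bo$, so
$$\|T^zf\|_{\bo}\simeq\|TU_z^*f\|_{\bo}\lesssim\sum_{l=1}^{L}\|Tk_{\vp_z(w_l)}h_l\|_{\bo}.$$
Decomposing each finite $h_l$ along the basis $\{e_k\}$ and invoking the hypothesis $\lim_{\m(z,0)\to\infty}\|Tk_ze\|=0$ componentwise — together with property~\hyperref[A2]{A.2}, which forces $\m(\vp_z(w_l),0)\to\infty$ whenever $\m(z,0)\to\infty$ — we obtain $\limsup_{\m(z,0)\to\infty}\|T^zf\|_{\bo}=0$ for such $f$. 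Density combined with the boundedness of $T$ guaranteed by Theorem~\ref{RKT} extends this to every $f$ with $\|f\|_{\bo}\leq 1$, yielding $B=0$ and hence the compactness of $T$.

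The principal obstacle is the tail estimate $\|\Sigma_{\rm far}\|\lesssim B+o_R(1)$ in (a): converting the quantity $B$ — in which the limsup over $z$ is taken \emph{before} the supremum over $f$ — into a uniform operator-norm bound on a tail of the covering sum requires careful bookkeeping of the translation operators $U_z$ and their inverses (whose norms are only quasi-bounded in the general case), the bounded overlap of the $G_j$, and the role of the truncation $M_{I^{(d)}}$ in making the unbounded ``large component'' part contribute only $o(1)$ via \eqref{smest}. This is the vector-valued analogue of the key step in \cite{MW2}.
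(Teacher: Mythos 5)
Your overall architecture is sound and matches the paper in its broad strokes: the easy inequality $B\lesssim\|T\|_e$ via Lemma~\ref{lm:Compactsaregood}, the use of Proposition~\ref{MainEst1} to replace $T$ by a localized sum, the truncation by $M_{I^{(d)}}$ controlled via \eqref{smest}, and the close/far split into a compact piece plus a tail. Your computation $\|S_rM_{I_{(d)}}\|\leq\sqrt{N}\,\|TM_{I_{(d)}}\|$ using commutation of $M_{I^{(d)}}$ with $P$ and the $1_{G_j}$ together with disjointness of $\{F_j\}$ and bounded overlap of $\{G_j\}$ is correct. Part (b) also follows the paper's line: density of normalized reproducing kernels, $U_z^*(k_wh)$ is a unimodular multiple of $k_{\vp_z(w)}h$, and properties A.2, A.5, A.7 to send $\m(\vp_z(w),0)\to\infty$.

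However, the claim $\|\Sigma_{\rm far}\|\lesssim B+o_R(1)$ is precisely the step that does not follow from the ingredients you list, and your closing paragraph acknowledges this without supplying the missing idea. The difficulty is structural, not bookkeeping: $B=\sup_{\|f\|\leq 1}\limsup_{\m(z,0)\to\infty}\|T^zf\|$ fixes the test vector \emph{before} sending $z\to\infty$, whereas the tail norm is a supremum over $f$ of $\sup_{j\text{ far}}\|T^{z_j}(\text{piece of }f)\|$, in which the test vector changes with $j$. Naively conjugating each summand gives you a family of expressions $\|T^{z_j}h_j\|$ with both the base point $z_j$ and the vector $h_j$ running, and there is no reason a priori why this is bounded by $B$. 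The paper resolves this with a compactness extraction: after pulling back by $U_{z_j}^*$ the normalized localized pieces become $d$-finite functions $h_j$ represented by measures supported in a fixed ball $\overline{D(0,\rho)}$; one applies weak-$*$ compactness in $C(\overline{D(0,\rho)})^*$ to each of the $d$ components, diagonalizes, and uses dominated convergence to obtain a single limit $h\in\bo$ with $\|h\|\lesssim 1$ and $\|h_j-h\|_{\bo}\to 0$. Only then can one write $\|Tg_j\|=\|TU_{z_j}^*h_j\|\approx\|TU_{z_j}^*h\|\simeq\|T^{z_j}h\|$ with a \emph{fixed} $h$ and conclude $\limsup_j\|T^{z_j}h\|\leq B$. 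This is also where the hypothesis \eqref{smest} genuinely enters: the $d$-finiteness of the $h_j$ (forced by $M_{I^{(d)}}$) is what reduces the weak-$*$ extraction to finitely many scalar components, which is essential since an infinite diagonal argument alone would not give norm convergence of $h_j$ to $h$ in $\bo$. Without this extraction step your tail estimate does not close, so as written the proof of (a), and hence (b), is incomplete.
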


\begin{proof}
We first prove $(a)$. It is easy to deduce that
\begin{align}\label{eqn:lessthan}
\sup_{\norm{f}_{\bo} \leq 1}\limsup_{\m(z,0)\to\infty} 
\norm{T^zf}_{\bo}\mbox{}\lesssim \|T\|_e.
\end{align}
Indeed, using the triangle inequality and the fact that 
$\lim_{\m(z,0)\to\infty}\norm{A^zf}_{\Bo}=0$ for every compact operator $A$ 
(Lemma \ref{lm:Compactsaregood}) we
obtain that  
$$
\sup_{\norm{f}_{\bo} \leq 1}\limsup_{\m(z,0)\to\infty} 
\norm{T^zf}_{\bo}\leq \sup_{\norm{f}_{\bo} 
\leq1}\limsup_{\m(z,0)\to\infty} 
\norm{(T-A)^zf}_{\bo}\lesssim \norm{T-A}_{\BL}
$$ 
for any compact operator $A$. Now, since $A$ is arbitrary this immediately 
implies \eqref{eqn:lessthan}.

The other inequality requires more work. 
Proposition~\ref{MainEst1} and assumption~\eqref{smest} will play prominent roles.
Observe that the essential norm of $T$ as 
an operator in $\LL(\bo)$ is quasi-equal to the essential norm of $T$ as an 
operator in $\BL$. Therefore, it is enough to estimate the essential norm of $T$
as an operator on $\BL$. 

Let $\epsilon>0$ and fix a $d$ so large that
\begin{align*}
\norm{TM_{I_{(d)}}}_{\BL}<\epsilon.
\end{align*}
Then
\begin{align*}
\norm{T}_{e}
=\norm{T\ml + TM_{I_{(d)}}}_{e}
\leq\norm{T\ml}_{e}+\epsilon.
\end{align*}

By Proposition~\ref{MainEst1} there 
exists $r>0$ such that for the covering $\FF_r=\{F_j\}_{j=1}^{\infty}$ 
associated to $r$ 
\begin{eqnarray*} \norm{T\ml- \sum_{j=1}^{\infty}M_{1_{F_j} }
TPM_{1_{G_j} }\ml}_{\BL} < \epsilon.
\end{eqnarray*}

Note that by Lemma~\ref{ToeplitzCompact} the Toeplitz operators 
$PM_{1_{G_j}}\ml$ 
are compact. Therefore the finite sum 
$\sum_{j\leq m}M_{1_{F_j} }TPM_{1_{G_j}}\ml$
is compact for every $m,d\in \N$. So, it is enough to show that  
$$
\limsup_{m\to\infty}\norm{T_{(m,d)}}_{\BL}\lesssim \sup_{\norm{f} \leq 1}
\limsup_{\m(z,0)\to \infty} \norm{T^zf}_{\Bbt},
$$
where 
$$
T_{(m,d)}=  \sum_{j\geq m}M_{1_{F_j} }TPM_{1_{G_j} }\ml.
$$
Indeed,
\begin{align*}
\norm{T\ml}_{e}
&=\norm{T\ml P}_e
\\&\leq \norm{TP\ml-\sum_{j\leq m}M_{1_{F_j} }TPM_{1_{G_j}}\ml}_{\BL}
\\&\leq\epsilon + \norm{T_{(m,d)}}_{\BL}. 
\end{align*}
Of course, the implied constants should be independent of the truncation
parameter, $d$. 
Let $f\in\bo$ be arbitrary of norm no greater than $1$. There holds:

\begin{align*} 
\norm{T_{(m,d)} f}_{\BL}^2 
&= \sum_{j\geq m}\norm{M_{1_{F_j} }TP M_{1_{G_j} }\ml f}_{\bo}^2
\\&= \sum_{j\geq m} \frac{\norm{M_{1_{F_j} }TP M_{1_{G_j} } \ml f}_{\bo}^2}
{\norm{M_{1_{G_j} }\ml f}_{\bo}^2}\norm{M_{1_{G_j} }\ml f}_{\bo}^2
\\&\leq N\sup_{j\geq m}\norm{M_{1_{F_j} }T l_j}_{\bo}^2
\\&\leq N\sup_{j\geq m} \norm{T l_j}_{\bo}^2,
\end{align*}
where 
$$l_j:=\frac{PM_{1_{G_j} }\ml f}{\norm{M_{1_{G_j} }\ml f}_{\bo}}.$$
Therefore, 
$$\norm{T_{(m,d)}}_{\BL}\leq
N \sup_{j\geq m}\sup_{\norm{f}_{\bo}=1}\left\{\norm{T l_j}_{\bo}: l_j
=\frac{PM_{1_{G_j} }\ml f}{\norm{M_{1_{G_j} }\ml f}_{\bo}}\right\},$$ and hence
$$ \limsup_{m\to\infty}\norm{T_{(m,d)}}_{\BL}\leq N \limsup_{j\to\infty}
\sup_{\norm{f}_{\bo}=1}\left\{\norm{T g}: g=\frac{PM_{1_{G_j}}\ml f}
{\norm{M_{1_{G_j} }\ml f}_{\bo}}\right\}.$$

Let $\epsilon>0$. There exists a normalized sequence $\{f_j\}_{j=1}^{\infty}$ in 
$\bo$ such that 

$$ \limsup_{j\to \infty}\sup_{\norm{f}_{\bo}=1}\left\{\norm{Tg}: 
g=\frac{PM_{1_{G_j}}\ml f}{\norm{M_{1_{G_j} }\ml f}_{\bo}}\right\}-\epsilon\leq 
\limsup_{j\to\infty}\norm{Tg_j}_{\bo},$$ where
$$g_j:=\frac{PM_{1_{G_j} }\ml f_j}{\norm{M_{G_j}\ml f_j }_{\bo}}=
\frac{\int_{G_j}\sum_{k=1}^{d}
\ip{ f_j}{k_we_k}_{\bo}k_we_k\,d\la(w)}{\left(\int_{G_j}
\sum_{k=1}^{d}\abs{\ip{f_j}{k_we_k}_{\bo}}^2d\la(w)
\right)^{\frac{1}{2}}}.$$ 
It is clear that the functions $g_j$ are $d$--finite. 
Recall that $\abs{U^*_{z}k_w} \simeq
\abs{k_{\vp_{z}(w)}}$, and therefore, $U^*_{z}k_w = c(w,z)k_{\vp_{z}(w)}$, where
$c(w,z)$ is some function so that $\abs{c(w,z)}\simeq 1$.

There exists a $\rho>0$ such that if $z_j\in G_j$ then $G_j\subset D(z_j,\rho)$.
Thus, for each $j$, choose a $z_j$ in $G_j$. By a change of variables, there
holds

$$g_j=\int_{\vp_{z_j}(G_j)}a_j(\vp_{z_j}(w))U^*_{z_j}k_w\,d\la(\vp_{z_j}(w)),$$ 
where  
$$
a_j(w):=
\frac{\sum_{k=1}^{d}\ip{f_j}{k_we_k}_{\bo}e_k}{c(\vp_{z_j}(w), z_j)
\left(\int_{G_j}
\sum_{k=1}^{d}\abs{\ip{f_j}{k_we_k}_{\bo}}^2\,d\la(w)\right)^{\frac{1}{2}}}
$$ 
on $G_j$, and zero otherwise. 

We claim that $g_j=U^*_{z_j}h_j$, where  
$$h_j(z):=\int_{\vp_{z_j}(G_j)}a_j
(\vp_{z_j}(w))k_w(z)\,d\la(\vp_{z_j}(w)).
$$
First, by applying the integral form of Minkowski's inequality to the 
components of $h_j$, we conclude that each component is in 
$L^{2}(\Om,\h;\sigma)$ and therefore $h_j$ is also in $L^{2}(\Om,\h;\sigma)$,
and consequently in $\bo$. Now we need to show that for every $g\in
L^{2}(\Om,\h;\sigma)$ there holds $\ip{g_j}{g}_{L^{2}(\Om,\h;\sigma)}=
\ip{U_{z_j}^{*}h_j}{g}_{L^{2}(\Om,\h;\sigma)}=
\ip{h_j}{U_{z_j}g}_{L^{2}(\Om,\h;\sigma)}$. This is done by applying Fubini's
Theorem component--wise.  

For each $k=1,\ldots, d$, the total variation of each member of the 
sequence of measures
$\{\ip{a_j\circ\vp_{z_j}}{e_k}_{\h}d\lambda\circ\vp_{z_j}\}_{j=1}^{\infty}$, 
as elements in the
dual space of continuous functions on $(\overline{D(0,\rho)})$ satisfies 
$$\norm{\ip{a_j\circ\vp_{z_j}}{e_k}_{\h}
d\lambda\circ\vp_{z_j}}_{C(\overline{D(0,\rho)})^{*}}
\lesssim \lambda(D(0,\rho)).$$
Therefore, for each $k$, there exists a weak-$\ast$ convergent subsequence which
approaches some measure $\nu_k$. Let
\begin{align*}
h(z)=\sum_{k=1}^{d}\int_{D(0,\rho)}k_{w}(z)d\nu_k(w)e_k.
\end{align*}
Abusing notation, we continue to index the subsequence by $j$.
The weak-$\ast$ convergence implies that that $\ip{h_j}{e_k}_{\h}$ converges to 
$\ip{h}{e_k}_{\h}$ pointwise. 
By the Lebesgue Dominated Convergence 
Theorem, this implies that $\ip{h_j}{e_k}_{\h}$ converges to 
$\ip{h}{e_k}_{\h}$ in 
$L^{2}(\Om,\C;\sigma)$ and thus $\ip{h}{e_k}_{\h}$ is in 
$L^{2}(\Om,\C;d\sigma)$.
Since the $h_j$ and $h$ are $d$--finite, this implies that $h_{j}$ converges to
$h$ in $L^{2}(\Om,\h;d\sigma)$ and that $h\in L^{2}(\Om,\h;d\sigma)$.
Additionally,
$1\geq \norm{g_j}_{\bo}=\norm{U_{z_j}^{*}h_j}_{\bo}
\simeq \norm{h_j}_{\bo}$. So, $\norm{h}_{\bo}\lesssim 1$. 
So, finally, there holds: 
\begin{align*}
\limsup_{m\to\infty}\norm{T_m}_{\BL}
&\leq N \lim_{j\to\infty}\norm{Tg_j}_{L^{2}(\Om,\h;d\sigma)}+\epsilon 
\\&=N\lim_{j\to\infty} 
    \norm{TU^*_{z_j}h_j}_{L^{2}(\Om,\h;\sigma)}
    +\epsilon
\\&\leq N \limsup_{j\to\infty} 
    \norm{TU^*_{z_j}h}_{L^{2}(\Om,\h;\sigma)}+\epsilon
\\&\lesssim N  \limsup_{j\to\infty}
    \norm{T^{z_j}h}_{L^{2}(\Om,\h;\sigma)}+\epsilon.
\end{align*}
Again, the constants of equivalency do not depend on $d$.
Therefore, $$\limsup_{m\to\infty}\norm{T_m}_{\BL}\lesssim  
\sup_{\norm{f}_{L^{2}(\Om,\h;\sigma)} \leq 1}\limsup_{d(z,0)\to \infty} 
\norm{T^zf}_{L^{2}(\Om,\h;\sigma)}.$$

(b) 
Note that $\norm{T^zk_we}_{\bo}\simeq \norm{Tk_{\varphi_z(w)}e}_{\bo}$ and 
$d(\varphi_z(w),0)
\simeq d(w,z) \to \infty$ as $d(z,0)\to\infty$. 
Therefore, for all $w\in\Om$ and finite $e\in\h$ 
$\norm{T^zk_we}_{\bo}\to 0$ as $d(z,0)\to\infty$.  By density, this
implies that 
$\norm{T}_e
\simeq\sup_{\norm{f}_{L^{2}(\Om,\h;\sigma)} \leq 1}\limsup_{d(z,0)\to \infty} 
\norm{T^zf}_{L^{2}(\Om,\h;\sigma)}=0$. We are done.
\end{proof}

\begin{cor} 
\label{ToeCompact}
Let $\bo$ be a strong Bergman-type space for which $\kappa>0$. If $T$ is in the
Toeplitz algebra $\mathcal{T}_{\lf}$ then
\begin{itemize}
\item[(a)] $ \|T\|_e\simeq \sup_{\norm{f}_{\bo} \leq 1}
\limsup_{\m(z,0)\to\infty} 
\norm{T^zf}_{\bo}.$
\item[(b)] If $\sup_{e\in\Cd,\norm{e}_{\Cd}=1}
\lim_{\m(z,0)\to\infty}\norm{Tk_ze}_{\bo}=0$ then $T$ must be compact.
\end{itemize}
\end{cor}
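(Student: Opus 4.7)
The plan is to verify the three hypotheses of Theorem~\ref{RKTC} for every $T\in\mathcal{T}_{\lf}$ and thereby reduce (a) to that theorem; (b) will then follow from (a) by density. Both $\norm{\cdot}_e$ and the quantity $b(T):=\sup_{\norm{f}_{\bo}\leq 1}\limsup_{\m(z,0)\to\infty}\norm{T^zf}_{\bo}$ are Lipschitz in the $\l(\bo)$-norm (using $\norm{U_z}\lesssim 1$ uniformly in $z$), so it suffices to verify the hypotheses on the dense subset of finite sums of finite products $T=\sum_l\prod_jT_{u_{j,l}}$ with $u_{j,l}\in\lf$, provided the constants that will appear in the conclusion of Theorem~\ref{RKTC} are uniform---which they are, since they depend only on the space $\bo$.

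Hypothesis~\eqref{smest} is immediate for such $T$. Each innermost symbol $u_{J_l,l}$ is $d_l$-finite for some $d_l\in\N$, meaning $u_{J_l,l}(z)=u_{J_l,l}(z)I^{(d_l)}$ pointwise, so for any $d\geq\max_l d_l$ one has $u_{J_l,l}(z)I_{(d)}=0$. Therefore $T_{u_{J_l,l}}M_{I_{(d)}}=0$ and hence $TM_{I_{(d)}}=0$ for all sufficiently large $d$.

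For~\eqref{e1} and~\eqref{e2}, linearity reduces us to a single product $T=T_{u_1}\cdots T_{u_n}$. The strong Bergman-type property, together with $U_z^2=I$ and~\eqref{trans}, gives
\[
U_z T k_z e_k=c_z\,T_{u_1\circ\vp_z}\cdots T_{u_n\circ\vp_z}(e_k),
\]
where $e_k$ on the right denotes the constant $\h$-valued function and $|c_z|\simeq 1$ uniformly in $z$ (since $k_0\equiv 1$ and $|\ip{k_z}{k_z}_{\Bo}|\norm{K_{\vp_z(z)}}_{\Bo}\simeq 1$). The constant function $e_k$ lies in $L^p(\Om,\ell^p;d\sigma)$; moreover, each $u_j\circ\vp_z$ belongs to $\lf$ with the same $\lh$-norm and the same finiteness class as $u_j$, so multiplication by $u_j\circ\vp_z$ preserves $L^p(\Om,\ell^p;d\sigma)$ with bounds independent of $z$, and Lemma~\ref{proj}(b) (valid since $\kappa>0$) guarantees that $P$ is bounded on the same space. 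Iterating from the inside out, $U_zTk_ze_k\in L^p(\Om,\ell^p;d\sigma)$ with norm bounded uniformly in $z$ and $k$ by a constant depending only on $\norm{u_j}_{\lh}$, the finiteness parameters, and the norm of $P$. Because $u_1$ is $d_1$-finite and Toeplitz operators preserve pointwise finiteness, $U_zTk_ze_k$ is $d_1$-finite, and H\"older's inequality bounds $\int_{\Om}\bigl(\sum_{i=1}^{\infty}|\ip{U_zTk_ze_k(u)}{e_i}_{\h}|\bigr)^p\,d\sigma(u)$ by $d_1^{p-1}\norm{U_zTk_ze_k}_{L^p(\Om,\ell^p;d\sigma)}^p$, yielding~\eqref{e2}. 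Applying the identical argument to $T^*=T_{u_n^*}\cdots T_{u_1^*}$, whose symbols again lie in $\lf$, gives~\eqref{e1}.

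With the hypotheses verified, Theorem~\ref{RKTC}(a) yields (a) for every finite sum of finite products with uniform constants, and the announced Lipschitz continuity extends (a) to all of $\mathcal{T}_{\lf}$. For (b), note that $\norm{T^zk_we}_{\bo}\simeq\norm{Tk_{\vp_z(w)}e}_{\bo}\to 0$ as $\m(z,0)\to\infty$: by~\hyperref[A2]{A.2} and the triangle inequality, $\m(\vp_z(w),0)\simeq\m(z,w)\to\infty$, so the assumption forces the right side to vanish. Combining density of linear combinations of normalized reproducing kernels in $\bo$ with the uniform bound $\norm{T^zf}_{\bo}\lesssim\norm{T}_{\l(\bo)}\norm{f}_{\bo}$ gives $\limsup_{\m(z,0)\to\infty}\norm{T^zf}_{\bo}=0$ for every $f\in\bo$, and (a) then forces $\norm{T}_e=0$. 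The main obstacle is the third paragraph: tracking the iterated action of Toeplitz operators with bounded, finite-matrix symbols on the constant function $e_k$ while maintaining uniform $L^p(\Om,\ell^p;d\sigma)$ bounds under the transplantations $\vp_z$; without the finiteness of the symbols in $\lf$ there is no way to pass from the natural $L^2$ bound $\norm{U_zTk_ze_k}_{\bo}\lesssim\norm{T}$ to the $L^p(\ell^1)$ quantity appearing in~\eqref{e1} and~\eqref{e2}.
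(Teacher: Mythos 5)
Your proposal is correct and follows essentially the same approach as the paper: reduce to a finite sum of finite products of Toeplitz operators with $\lf$ symbols, verify the three hypotheses of Theorem~\ref{RKTC} for such $T$ (with~\eqref{smest} being immediate from the finiteness of the innermost symbols, and~\eqref{e1},~\eqref{e2} following from the identity $U_zTk_ze_k = c\,\bigl(\prod_j T_{u_j\circ\vp_z}\bigr)e_k$ together with $L^p(\ell^p)$-boundedness of $P$ and of multiplication by finite bounded symbols), and then extend to all of $\mathcal{T}_{\lf}$ via the Lipschitz continuity of both $\norm{\cdot}_e$ and the Berezin-type quantity in the operator norm. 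The paper phrases the extension step as an explicit $\epsilon$-perturbation by a finite sum of finite products $S$ with $\norm{T-S}<\epsilon$, but this is the same argument; you also helpfully spell out the $L^p(\ell^p)$-to-$L^p(\ell^1)$ passage via $d_1$-finiteness of $U_zTk_ze_k$ and H\"older, which the paper compresses into ``by the boundedness of the Bergman projection and the finiteness of the symbols.''
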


\begin{proof}
We will show that $T$ satisfies the hypotheses of Theorem~\ref{RKTC}.
First, let 
$$
T=\sum_{k=1}^{M}\prod_{j=1}^{N}T_{u_{j,k}}
$$ 
where each $u_{j,k}
\in \lf$ and is $d_{j,k}$--finite. 
By the
triangle inequality, is suffices to show that $T$ satisfies the hypotheses
of Theorem~\ref{RKTC} when
$T=\prod_{j=1}^{N}T_{u_{j}}$ and $u_j\in \lf$ and
$u_j$ is $d_j$--finite. Clearly, $T$ satisfies \eqref{smest}. Now we will show 
that it also satisfies \eqref{e1} and \eqref{e2}.
For any $z\in\Om$ and $i,k\in\N$ there holds
\begin{align*}
\ip{U_{z}Tk_ze_k}{e_i}_{\h}
&=\ip{\left(\prod_{j=1}^{N}T_{u_j\circ\vp_{z}}\right)(k_0e_k)}{e_i}_{\h}
\\&=\ip{\left(\prod_{j=1}^{N}PM_{u_j\circ\vp_{z}}\right)(k_0e_k)}{e_i}_{\h}.
\end{align*}
By the boundedness of the Bergman projection and the finiteness of the symbols,
we deduce that $T$ satisfies \eqref{e1}. The same argument shows that 
$T$ satisfies \eqref{e2}.

Now, let $T$ be a general operator in $\mathcal{T}_{\lf}$. Note that if
we prove that (a) holds, then it follows easily that $(b)$ also holds.
In the proof of Theorem \ref{RKTC}, we proved that
\begin{align*}
\sup_{\norm{f}_{\bo} \leq 1}\limsup_{\m(z,0)\to\infty}
\norm{T^zf}_{\bo}\lesssim\|T\|_e.
\end{align*}
So, we only need to prove to other inequality.
Since $T\in\mathcal{T}_{\lf}$, there is
an operator, $S$, that is a finite sum of finite products of Toepltitz operators
with $\lh$ symbols and 
such that $\norm{T-S}_{e}\leq\norm{T-S}_{\mathcal{L}(\bo)}<\epsilon$.
Using the fact that (a) is true for operators of this form, there holds:
\begin{align*}
\norm{T}_{e}\leq \norm{T-S}_{e}+\norm{S}_e
\leq \epsilon + \sup_{\norm{f}_{\bo} \leq 1}
    \limsup_{\m(z,0)\to\infty}\norm{S^zf}_{\bo}.
\end{align*}
If $\norm{f}_{\bo}\leq 1$, there holds:
\begin{align*}
\norm{S^{z}f}_{\bo}\leq\norm{(T-S)^{z}f}_{\bo} + \norm{T^{z}f}_{\bo}
\lesssim \epsilon + \norm{T^{z}f}_{\bo}.
\end{align*}
Combining the last two inequalities we obtain the desired inequality for $T$.
\end{proof}

Our next goal is to show that the previous corollary holds even with a weaker 
assumption. Let $\textnormal{BUCO}$ denote the algebra of 
operator--valued functions
$u:(\Om,\m)\to(\mathcal{L}(\h),\norm{\cdot}_{\mathcal{L}(\h)})$ that are
in $\lf$ and uniformly continuous. This is equivilent to requiring that
$u$ be $d$--finite and requiring that 
$\ip{ue_k}{e_i}_{\h}\in\textnormal{BUC}(\Om,\m)$ for $i,k=1,\cdots d$,  where 
$\textnormal{BUC}(\Om,\m)$ is the algebra of bounded uniformly continuous 
functions on $\Om$. Let 
$\mathcal{T}_\textnormal{BUCO}$ denote the algebra generated by the Toeplitz 
operators with symbols from $\textnormal{BUCO}$.

In this section we show that if $T\in \mathcal{T}_{\textnormal{BUCO}}$ and
$\ip{Tk_ze_k}{k_ze_i}_{\bo}\to 0$ as 
$\m(z,0)\to\infty$, for every $i,k\in\N$ then $T$ is compact.   
Recall that for a
given operator $T$ the Berezin transform of $T$ is an operator--valued 
function on $\Om$ given by the formula
$$
\ip{\tilde{T}(z)e_k}{e_i}_{\h}:=\ip{T\kbz e_k}{\kbz e_i}_{\bo}.
$$

\begin{thm}
\label{Berezin} Let $T\in\mathcal{T}_\textnormal{BUCO}$. Then  
$$\limsup_{\m(z,0)\to\infty}\ip{\widetilde{T}(z)e_k}{e_i}_{\h}=0
$$ 
if and only if 
$$
\limsup_{\m(z,0)\to\infty} \norm{T^zf}_{\bo}=0
$$ 
for every $f\in\Bbt$. In particular, if the Berezin transform $\tilde{T}(z)$ ``vanishes at the boundary of $\Om$'' then the operator $T$ must be compact.
\end{thm}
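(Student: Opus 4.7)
For the reverse direction, assume $\|T^zf\|_{\bo}\to 0$ for every $f\in\bo$ and test against $f=k_0 e_k$. In a strong Bergman-type space formula \eqref{trans} reads $U_z k_0 e=\sigma(\Om)\|K_z\|_{\Bo}\,k_ze$, so $k_ze=\alpha(z)\,U_z k_0 e$ for some scalar with $|\alpha(z)|\simeq 1$; since $U_z^*=U_z=U_z^{-1}$,
\[
\ip{\widetilde{T}(z)e_k}{e_i}_{\h}=\ip{T k_z e_k}{k_z e_i}_{\bo}=|\alpha(z)|^2\,\ip{T^z k_0 e_k}{k_0 e_i}_{\bo},
\]
whose absolute value is bounded by $\|T^z k_0 e_k\|_{\bo}\|k_0 e_i\|_{\bo}\to 0$.

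For the forward direction, assume $\widetilde{T}(z)\to 0$. The plan is to prove $T$ is compact, at which point Lemma \ref{lm:Compactsaregood} upgrades this to the stronger conclusion $\|T^zf\|_{\bo}\to 0$ for every $f\in\bo$. Since $T\in\mathcal{T}_\textnormal{BUCO}$, Corollary \ref{ToeCompact}(b) reduces compactness to showing $\|T k_z e\|_{\bo}\to 0$ for every finite $e\in\h$, and the identity
\[
\|T k_z e\|_{\bo}^2=\ip{T^*T k_z e}{k_z e}_{\bo}=\ip{\widetilde{T^*T}(z)e}{e}_{\h}
\]
reduces this further to $\widetilde{T^*T}(z)\to 0$. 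Since BUCO is closed under adjoint and product, $T^*T\in\mathcal{T}_\textnormal{BUCO}$, so the real task is the \emph{propagation claim}: for $S\in\mathcal{T}_\textnormal{BUCO}$, vanishing of $\widetilde{S}(z)$ at infinity forces vanishing of $\widetilde{S^*S}(z)$ at infinity.

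To establish propagation, approximate $S$ in operator norm by $S_n=\sum_\ell\prod_j T_{u_{j,\ell}^{(n)}}$ with $u_{j,\ell}^{(n)}\in\textnormal{BUCO}$ and prove, by induction on $m$, the asymptotic formula
\[
\widetilde{T_{u_1}\cdots T_{u_m}}(z)=u_1(z)u_2(z)\cdots u_m(z)+o(1)\quad\text{as}\quad\m(z,0)\to\infty,
\]
whose base case $\widetilde{T_u}(z)-u(z)\to 0$ follows from uniform continuity of $u$ and whose inductive step uses the semi-commutator estimate $\widetilde{T_uT_v-T_{uv}}(z)\to 0$ for BUCO $u,v$. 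Writing $F_n:=\sum_\ell\prod_j u_{j,\ell}^{(n)}$, one obtains $\widetilde{S_n}(z)=F_n(z)+o(1)$ and $\widetilde{S_n^*S_n}(z)=F_n(z)^*F_n(z)+o(1)$; the uniform bound $|\widetilde{S}(z)-\widetilde{S_n}(z)|\leq\|S-S_n\|$ converts vanishing of $\widetilde{S}$ into asymptotic smallness of $F_n$ (up to $\|S-S_n\|$), hence of $F_n^*F_n$, and thence of $\widetilde{S_n^*S_n}$; a triangle-inequality passage letting $n\to\infty$ completes the argument. The principal obstacle is the semi-commutator estimate $\widetilde{T_uT_v-T_{uv}}(z)\to 0$ for BUCO $u,v$, which rests on uniform continuity of the symbols together with concentration of $|k_z|^2\,d\sigma$ near $z$ (a consequence of the Rudin--Forelli estimate A.6).
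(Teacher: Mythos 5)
Your reverse direction is fine, and it matches the paper's (the cleanest derivation is simply $U_z(k_0 e)(w) = k_0(\varphi_z(w))k_z(w)e = k_z(w)e$ since $k_0\equiv 1$, so $T^z(k_0 e) = U_zTU_z^*(k_0e) = U_zT(k_ze)$ and hence $|\ip{\widetilde{T}(z)e_k}{e_i}| = |\ip{Tk_ze_k}{k_ze_i}| \le \|T^z(k_0e_k)\|_{\bo}$; note that the coefficient you extracted from \eqref{trans} should be identically $1$, not merely $\simeq 1$).

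The forward direction, however, is where you diverge from the paper, and it is here that the argument has a genuine gap. Your reduction $\|Tk_ze\|_{\bo}^2 = \ip{\widetilde{T^*T}(z)e}{e}_{\h}$ is correct and the appeal to Corollary~\ref{ToeCompact}(b) and Lemma~\ref{lm:Compactsaregood} is legitimate (though Corollary~\ref{ToeCompact} carries the extra standing hypotheses ``strong'' and $\kappa>0$, which Theorem~\ref{Berezin} does not explicitly state). But the entire burden of the proof then rests on what you call the ``propagation claim'' and, inside it, the semi-commutator estimate $\widetilde{T_uT_v - T_{uv}}(z)\to 0$ for $u,v\in\textnormal{BUCO}$. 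You acknowledge this is the ``principal obstacle'' but offer only a one-line heuristic (uniform continuity plus Rudin--Forelli concentration). In the abstract Bergman-type framework of this paper this estimate is not an available tool: it is not among A.1--A.7, it is not proved anywhere in the paper, and proving it from the axioms would itself require a nontrivial argument (one needs, for example, quantitative control on $\int_{\Om}|1-\overline{k_z(w)}k_z(w)|\,|u(w)-u(z)|\,|k_z(w)|^2\,d\sigma(w)$ that splits the domain at a $\m$-ball around $z$ and uses both uniform continuity and uniform integrability of $|k_z|^2 d\sigma$ away from $z$). Until that estimate is established, the inductive step, the uniformity of the $o(1)$ terms in $n$, and the final triangle-inequality passage all remain unsupported.

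The paper's own proof of the forward direction takes a completely different and self-contained route that avoids the semi-commutator machinery altogether: it argues by contradiction, extracts from a putative bad sequence $z_n$ a WOT-convergent subnet of $T^{z_n}$, identifies the limit as $0$ via the injectivity of the Berezin transform (the lemma immediately preceding), and then upgrades WOT to SOT by approximating $T\in\mathcal{T}_{\textnormal{BUCO}}$ in norm by finite sums of products of Toeplitz operators and invoking Lemma~\ref{SOT} (SOT-precompactness of the translate family $T_{u\circ\varphi_{z_n}}$ for $u\in\textnormal{BUCO}$). That argument never needs an asymptotic formula for the Berezin transform of a product. If you want to salvage your approach, you would need to prove the semi-commutator vanishing as a lemma from A.1--A.7, at which point the result is interesting in its own right; as written it is a gap.
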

For the remainder of this section, SOT will denote the strong operator topology 
in $\mathcal{L}(\bo)$ and WOT will denote the weak operator topology in $\mathcal{L}(\bo)$.

The key to proving Theorem \ref{Berezin} will be the following two lemmas.

\begin{lm}
The Berezin transform is one to one. That is, if $\widetilde{T}=0$, then
$T=0$.
\end{lm}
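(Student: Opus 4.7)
The plan is to show that the vanishing of the Berezin transform on the full diagonal already pins down every matrix coefficient $\langle TK_z e_k, K_w e_i\rangle_\bo$, and then to use density of kernel functions to conclude $T=0$.

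First I would reformulate the hypothesis. Since $k_z = K_z/\|K_z\|_\Bo$ and $\|K_z\|_\Bo > 0$, the assumption $\widetilde T \equiv 0$ is equivalent to
\[
\ip{TK_z e_k}{K_z e_i}_{\bo}=0 \quad \text{for all } z\in\Om,\ i,k\in\N.
\]
Now introduce the ``polarized'' function
\[
h_{i,k}(z,w):=\ip{TK_z e_k}{K_w e_i}_{\bo} = (TK_z e_k)_i(w),
\]
where the second equality comes from the reproducing property $\ip{f}{K_w e_i}_\bo = f_i(w)$ for $f\in\bo$. Because $TK_z e_k\in\bo$, the map $w\mapsto h_{i,k}(z,w)$ is holomorphic on $\Om$. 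Dually, writing $h_{i,k}(z,w)=\overline{(T^*K_w e_i)_k(z)}$, we see $z\mapsto h_{i,k}(z,w)$ is anti-holomorphic on $\Om$. Thus $h_{i,k}$ is a sesquianalytic function on $\Om\times\Om$ which vanishes on the diagonal $\{(z,z):z\in\Om\}$.

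The heart of the argument is to upgrade this diagonal vanishing to identical vanishing; this is the only step that is not completely routine. I would do it by polarization: fix $z_0\in\Om$ and expand $h_{i,k}$ in a convergent double power series around $(z_0,z_0)$ in the variables $\bar z - \bar z_0$ and $w-w_0$, say $h_{i,k}(z,w)=\sum_{\alpha,\beta} c_{\alpha,\beta}(\bar z-\bar z_0)^\alpha(w-w_0)^\beta$. Substituting $w=z$ and applying $\p^\gamma_z\p^\delta_{\bar z}$ at $z=z_0$ yields $\gamma!\,\delta!\, c_{\delta,\gamma}=0$, so all coefficients vanish and $h_{i,k}\equiv 0$ on a neighborhood of $(z_0,z_0)$. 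Equivalently, the holomorphic extension $G(\zeta,w):=h_{i,k}(\bar\zeta,w)$ on $\bar\Om\times\Om\subset\C^{2n}$ vanishes on the totally real submanifold of maximal dimension $\{(\bar z,z):z\in\Om\}$, and therefore vanishes on the connected domain $\bar\Om\times\Om$ by the identity theorem. Either way, $h_{i,k}(z,w)=0$ for every $z,w\in\Om$ and every $i,k$.

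Finally I would collect the consequences. Since $(TK_z e_k)_i(w)=h_{i,k}(z,w)=0$ for every $w\in\Om$ and every component index $i$, we have $TK_z e_k = 0$ in $\bo$ for each $z\in\Om$ and each $k\in\N$. By the corollary following the first lemma of Section 2 (linear combinations of reproducing kernels with finite vector coefficients are dense in $\bo$), the operator $T$ vanishes on a dense subspace, and since $T$ is bounded, $T\equiv 0$. The main obstacle, as indicated, is the sesquianalytic uniqueness in the middle step; everything else is routine once that polarization is in place.
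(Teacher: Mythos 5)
Your proposal is correct and follows essentially the same route as the paper: both reduce $\widetilde{T}\equiv 0$ to the vanishing on the diagonal of a sesquianalytic function $\ip{TK_z e_k}{K_w e_i}_{\bo}$, invoke the identity theorem for such functions to conclude it vanishes identically, and then use the reproducing property together with density of kernel linear combinations to get $T=0$. The only difference is cosmetic: the paper cites the sesquianalytic uniqueness result (Krantz, Exercise 3, p.\ 365) while you sketch the polarization argument that proves it.
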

\begin{proof}
Let $T\in\mathcal{L}(\bo)$ and suppose that
$\widetilde{T}=0$. 
Then there holds:
\begin{align*}
0=\ip{T(k_ze_k)}{k_ze_i}_{\bo}
=\frac{1}{K(z,z)}\ip{T(K_ze_k)}
{K_ze_i}_{\bo}
\end{align*}
for all $z\in\Om$ and for all $i,k\in\N$. In particular, there holds:
\begin{align*}
\frac{1}{K(z,z)}\ip{T(K_ze_k)}
{K_ze_i}_{\bo}\equiv 0.
\end{align*}
Consider the function 
\begin{align*}
F(z,w)=\ip{T(K_we_k)}{K_ze_i}_{\bo}.
\end{align*}
This function is analytic in $z$, conjugate analytic in $w$ and $F(z,z)=0$ for 
all $z\in\Om$. By a standard result for several complex variables (see for 
instance \cite{kra}*{Exercise 3 p. 365}) this implies 
that $F$ is identically $0$. Using the reproducing property, we conclude that
\begin{align*}
F(z,w)=\ip{T(K_we_k)(z)}{e_i}_{\h}\equiv 0,
\end{align*}
and hence 
\begin{align*}
T(K_we_k)(z)\equiv 0,
\end{align*}
for every $w\in\Om$ and $k\in\N$. 
Since the products $K_we_k$ span $\bo$, we conclude that 
$T\equiv 0$ and the desired result follows.
\end{proof}

\begin{lm}\label{SOT} Let $u\in\textnormal{BUCO}$.  For any sequence 
$\{z_n\}_{n=1}^{\infty}$ in $\Omega$, the sequence of Toeplitz operators 
$T_{u\circ \varphi_{z_n}}$ has a \textnormal{SOT} convergent subnet.
\end{lm}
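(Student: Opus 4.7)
The plan is to exploit uniform continuity of $u$ together with the quasi-invariance of the metric $\m$ (property \hyperref[A2]{A.2}) to produce, via an Arzel\`a--Ascoli type argument, a pointwise limit symbol $v$ and then promote this to SOT convergence of the Toeplitz operators.

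First, since $u \in \textnormal{BUCO}$, $u$ is $d$-finite for some fixed $d$, uniformly bounded, and each scalar entry $\ip{u e_k}{e_i}_{\h}$ is in $\textnormal{BUC}(\Om,\m)$. The quasi-invariance of $\m$ under $\vp_{z_n}$ implies that the family $\{u \circ \vp_{z_n}\}$ is equicontinuous as a family of functions $(\Om,\m) \to \l(\h)$ (with each entry equicontinuous as a scalar function). The family is also uniformly bounded by $\norm{u}_{\lh}$. Since $(\Om,\m)$ is separable and finitely compact, every closed ball is compact, so by a standard diagonal/Arzel\`a--Ascoli argument applied entry by entry (there are only $d^2$ entries), we extract a subsequence $\{z_{n_k}\}$ such that $u \circ \vp_{z_{n_k}}$ converges uniformly on each closed ball $D(0,R)$ to a limit $v:\Om \to \l(\h)$. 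The limit $v$ is again $d$-finite, bounded by $\norm{u}_{\lh}$, and uniformly continuous, so $v \in \textnormal{BUCO}$.

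Second, I show $T_{u\circ\vp_{z_{n_k}}} \to T_v$ in SOT. Fix $f \in \bo$ and write
\begin{align*}
T_{u\circ\vp_{z_{n_k}}}f - T_v f = P\!\left((u\circ\vp_{z_{n_k}} - v)f\right).
\end{align*}
Since $\kappa > 0$ (strong Bergman-type), $P$ is bounded on $L^2(\Om,\h;d\sigma)$, so it suffices to show that $(u\circ\vp_{z_{n_k}} - v)f \to 0$ in $L^2(\Om,\h;d\sigma)$. Split the integral at a closed ball $D(0,R)$. On $D(0,R)^c$, the symbols are dominated by $2\norm{u}_{\lh}$, so the tail contribution is bounded by $(2\norm{u}_{\lh})^2 \int_{D(0,R)^c}\norm{f}_{\h}^2\,d\sigma$, which can be made arbitrarily small by choosing $R$ large, since $f \in L^2(\Om,\h;d\sigma)$. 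On $D(0,R)$, uniform convergence of $u\circ\vp_{z_{n_k}}$ to $v$ shows that the operator norm $\norm{u\circ\vp_{z_{n_k}}(w) - v(w)}_{\l(\h)}$ converges to $0$ uniformly, and then dominated convergence finishes this piece.

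The main obstacle is the first step: upgrading the uniform continuity of $u$ to equicontinuity of the translated family, and then justifying pointwise/compact convergence in a setting where $\Om$ is only finitely compact (not globally compact) and where one must track the $d$-finite operator-valued nature of the symbols. Once $v$ has been produced as the limit on compacts, the second step is a routine truncation argument combining boundedness of $P$ with dominated convergence.
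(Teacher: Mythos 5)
Your proof is correct, and it is in fact a bit stronger than what the lemma asserts (you extract a convergent \emph{subsequence}, not merely a subnet). The paper's own proof is a one-line reduction: it observes that each entry $\ip{u e_k}{e_j}_{\h}$ is in $\textnormal{BUC}(\Om,\m)$ and invokes the scalar-valued result \cite{MW2}*{Lemma 4.7}, patching together the finitely many ($d^2$) entry-wise subnets. What you have done is inline the content of that cited scalar lemma: equicontinuity of $\{u\circ\vp_{z_n}\}$ from uniform continuity of $u$ plus quasi-invariance of $\m$ (\hyperref[A2]{A.2}), Arzel\`a--Ascoli together with a diagonal argument over compact balls in the separable, finitely compact metric space $(\Om,\m)$ to get a compactly-uniform limit $v\in\textnormal{BUCO}$, and then a truncation estimate to promote compactly-uniform symbol convergence to SOT convergence $T_{u\circ\vp_{z_{n_k}}}\to T_v$. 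This gives a self-contained proof at the cost of a bit more work, while the paper's version keeps the scalar case as a black box.

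One small inaccuracy worth fixing: you justify boundedness of $P$ on $L^2(\Om,\h;d\sigma)$ by ``since $\kappa>0$ (strong Bergman-type)''. This conflates two unrelated hypotheses ($\kappa$ is the exponent from \hyperref[A6]{A.6}, while ``strong'' refers to having equalities in \hyperref[A1]{A.1}--\hyperref[A5]{A.5}), and neither is needed here: $P$ is by definition the orthogonal projection of $L^2(\Om,\h;d\sigma)$ onto $\bo$, hence bounded with norm one on $L^2$ unconditionally. The hypotheses on $\kappa$ only enter for the $L^p$ theory with $p\neq 2$. With that justification replaced, the argument is clean.
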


\begin{proof}
Since $u\in\textnormal{BUCO}$, it is finite and 
$\ip{Te_k}{e_j}_{\h}\in \textnormal{BUC}(\Om,\m)$. The result therefore follows
easily from the corresponding scalar--valued case, 
\cite{MW2}*{Lemma 4.7} by taking limits ``entry--wise''. 
\end{proof}

\begin{proof}[Proof of Theorem~\ref{Berezin}]   Suppose that  
$\limsup_{\m(z,0)\to\infty} \norm{T^zf}_{\bo}=0$ for every $f\in\bo$. 
Take
$f\equiv e_k$ then $\norm{T^{z}e_k}_{\bo}^{2}\simeq\norm{Tk_{z}e_k}_{\bo}^{2}$. 
Then there holds that:
\begin{align*}
\abs{\ip{\widetilde{T}(z)e_k}{e_i}_{\h}}
=\abs{\ip{Tk_ze_k}{k_ze_i}_{\mathcal{B}(\Om)}}
\leq\norm{Tk_{z}e_k}_{\mathcal{B}(\Om)}.
\end{align*}
Therefore, $\limsup_{\m(z,0)\to\infty}\ip{\widetilde{T}(z)e_k}{e_i}_{\h}=0$
for all $i,k\in\N$.

In the other direction, suppose that  $\lim_{\m(z,0)\to\infty} 
\abs{\ip{\tilde{T}(z)e_k}{e_i}_{\h}}=0$ for every $i,k\in\N$ but  
$\limsup_{\m(z,0)\to\infty} \norm{T^zf}_{\bo}>0$ for some $f\in\bo$. In this case there exists a sequence $\{z_n\}_{n=1}^{\infty}$ 
with $\m(z_n,0)\to\infty$  such that 
$\norm{T^{z_n}f}_{\bo}\geq c>0.$ We will show that $T^{z_n}$ has a subnet
that converges to the zero operator in SOT. This, of course, will be a 
contradiction.
Observe first that $T^{z_n}$ has a subnet which converges in the WOT. Call this
operator $S$. Slightly abusing notation, we continue to denote the subnet by $\{z_n\}_{n=1}^{\infty}$. Then $\ip{{T}^{z_n}k_ze_k}{k_ze_i}_{\bo}\to \ip{{S}e_k}{e_i}_{\h}$ for every $i,k\in\N$. Thus, the entries of $\widetilde{T}$ converge pointwise
to the entries of $\widetilde{S}$. More precisely, for every $z\in\Om$ and
for every $k,i\in\N$, there holds 
$\ip{\widetilde{T}^{z_n}(z)e_k}{e_i}_{\bo}\to 
\ip{\widetilde{S}(z)e_k}{e_i}_{\h}$. The assumption 
$\lim_{\m(z,0)\to\infty} \abs{\ip{\tilde{T}(z)e_k}{e_i}_{\h}}=0$
implies that $\ip{\tilde{T}^{z_n}e_k}{e_i}_{\h}\to 0$ pointwise for every 
$i,k\in\N$ as well and hence $\tilde{S}\equiv 0$. Therefore $S$ is the zero operator and consequently $T^{z_n}$ converges to zero in the WOT.

Next, we use the fact that $T$ is in $\mathcal{T}_{\textnormal{BUCO}}$ to show that there exists a subnet of $T^{z_n}$ which converges in SOT. Let $\epsilon>0$, then there exists an operator $A$ which is a finite sum of finite products of Toeplitz operators with symbols in $\textnormal{BUCO}$ such that $\norm{T-A}_{\mathcal{L}(\bo)}<\epsilon$.  We first show that $A^{z_n}$ must have a convergent subnet in SOT. By linearity we can consider only the case when $A=T_{u_1}T_{u_2}\cdots T_{u_k}$ is a finite product of Toeplitz operators. As noticed before $$A^{z_n}=T_{u_1\circ \varphi_{z_n}}T_{u_2\circ \varphi_{z_n}}\cdots T_{u_k\circ \varphi_{z_n}}.$$ Now, since a product of SOT convergent nets is SOT convergent, it is enough to treat the case when $A=T_{u}$ is a single Toeplitz operator. But, the single Toeplitz operator case follows directly from Lemma~\ref{SOT}. 

Denote by $B$ the SOT limit of this subnet $A^{{z_n}_k}$. If $f\in\bo$ is
of norm at most $1$, there holds:
\begin{align*}
\norm{Bf}_{\bo}^{2}
&=\ip{Bf}{Bf}_{\bo}
\\&\leq \abs{\ip{Bf-A^{{z_n}_k}f}{Bf}_{\bo}} +
\abs{\ip{T^{{z_n}_k}f-A^{{z_n}_k}f}{Bf}_{\bo}} +
\abs{\ip{T^{{z_n}_k}f}{Bf}_{\bo}}.
\end{align*}
Using the fact that $B$ is the SOT limit of bounded operators, we deduce that
$B$ is bounded. By the weak convergence of $T^{z_n}$, by taking $n_k$ ``large'' 
enough, the outer terms above can be made less than $\epsilon$. By assumption, 
the middle term is less than $\epsilon$. We deduce 
that $\norm{B}_{\mathcal{L}(\bo}\lesssim \epsilon$.
Now, for every $f\in\bo$ of norm no greater than $1$ there holds 
\begin{align*} 
\norm{T^{{z_n}_k}f}_{\bo}\leq 
\norm{A^{{z_n}_k}f}_{\bo}+\norm{(T^{{z_n}_k}-A^{{z_n}_k})f}_{\bo}.
\end{align*}
Therefore, $\limsup \norm{T^{{z_n}_k}f}_{\bo}\lesssim \norm{Bf}_{\bo}+
\epsilon\leq 2\epsilon$. Finally, the fact that $\epsilon>0$ was arbitrary implies that $\lim \norm{T^{{z_n}_k}f}_{\bo}=0$ for all $f$. Consequently, we found a subnet $T^{{z_n}_k}$ which converges to the zero operator in SOT. We are done.
 
\end{proof}

\section{Density of $\mathcal{T}_{\lf}$}\label{dens}
In this section, we will prove that for a restricted class of 
Bergman--type function spaces, that an operator is compact if and only if
it is in the Toeplitz algebra and its Berezin transform vanishes on 
$\partial\Om$. See, for example, \cites{Sua,MSW,MW,MW2,RW,BI} for 
similar results for the scalar--valued Bergman--type spaces. 

First, let $\mu$ be a measure on $\Om$. We define the Toeplitz operator on
$\Bo$ with symbol $\mu$ by:
\begin{align*}
(T_{\mu}f)(z)=\int_{\Om}\ip{K_w}{K_z}_{\Bo}f(w)d\mu(w).
\end{align*}
Recall that a positive measure $\mu$ on $\Om$ is said to be Carleson with 
respect to $\sigma$ if there is a $C$ such that for every $f\in\Bo$ there holds:
\begin{align*}
\int_{\Om}\abs{f}^{2}d\mu \leq C\int_{\Om}\abs{f}^{2}d\sigma. 
\end{align*}
Clearly, if $a$ is a bounded function on $\Om$, then $ad\sigma$ is Carleson
with respect to $\sigma$. 

Next, let $\mu$ be a countably additive matrix--valued function from the Borel 
sets of
$\Om$ to $\l(\h)$ such that $\mu(\emptyset)=0$. Then we say that $\mu$ is
a matrix--valued measure. The entries of $\mu$, which are given
by $\ip{\mu e_k}{e_j}_{\h}$,
are all measures on $\Om$. We can define a Toeplitz operator $T_{\mu}$ on
$\bo$ by the formula:
\begin{align*}
(T_{\mu}f)(z)=\int_{\Om}\ip{K_w}{K_z}_{\Bo}d\mu(w)f(w).
\end{align*}

For this section, we define a more restrictive $\h$--valued Bergman--type space. 
We add the additional assumption:
\begin{itemize}

\item[\label{A.8} A.8] If $\mu$ is a scalar--valued measure on $\Om$ 
whose total variation is
Carleson with respect to $\sigma$, then 
$T_{\mu}\in\mathcal{T}_{\textnormal{BUC}}$, where
$\mathcal{T}_{\textnormal{BUC}}$ is the algebra of operators on $\Bo$ 
generated by Toeplitz operators with symbols that are bounded and 
uniformly continuous on $\Om$. 
\end{itemize}
We will call such spaces $\Boa$ and we will call their $\h$--valued extensions
$\boa$. (The $\mathcal{A}$ is for ``approximation''.) This
is not a trivial assumption and it is (at this point) not known whether this holds
for all Bergman--type spaces (see \cite{MW2}). 
It does hold in the standard Bergman spaces on the ball 
and polydisc and also on the Fock space see \cites{Sua,MSW,MW,BI}. Thus, the 
following theorem can be viewed as an extension of the main theorems in 
\cites{Sua,MSW,MW,BI} to the $\h$--valued setting. 

We will prove the following theorem:
\begin{thm}
\label{BerezinToe} Let $T\in\mathcal{L}(\boa)$. Then $T$ is compact if and only
if $\limsup_{\m(z,0)\to\infty} \tilde{T}(z)=0$ and $T\in\mathcal{T}_{\lf}$.
\end{thm}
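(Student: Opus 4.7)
My approach is to reduce both implications to the corresponding scalar-valued results (known under \hyperref[A.8]{A.8} from \cites{Sua,MSW,MW,BI}), combined with Theorem \ref{Berezin} and Corollary \ref{ToeCompact}. The key structural observation is that \hyperref[A.8]{A.8} forces
\begin{align*}
\mathcal{T}_{\lf}=\mathcal{T}_{\textnormal{BUCO}}
\end{align*}
as subalgebras of $\mathcal{L}(\boa)$. Indeed, any $u\in\lf$ is $d$-finite with scalar entries $u_{kj}=\ip{ue_k}{e_j}_{\h}\in L^{\infty}$, and a direct computation yields the factorization $T_u=\sum_{k,j\le d} T_{E_{kj}}\circ T_{u_{kj}I^{(d)}}$. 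Here $T_{E_{kj}}$ is the Toeplitz operator with the constant matrix-unit symbol $E_{kj}$ (trivially in $\mathcal{T}_{\textnormal{BUCO}}$), and $T_{u_{kj}I^{(d)}}$ is the ``diagonal lift'' to $\boa$ of the scalar Toeplitz $T_{u_{kj}}^{\mathrm{scalar}}$ on $\Boa$. Since $u_{kj}\,d\sigma$ is Carleson (because $u_{kj}\in L^{\infty}$), \hyperref[A.8]{A.8} places $T_{u_{kj}}^{\mathrm{scalar}}\in\mathcal{T}_{\textnormal{BUC}}$; approximating by BUC polynomials, lifting, and using norm-continuity of the lift yields $T_u\in\mathcal{T}_{\textnormal{BUCO}}$.

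\textbf{Necessity.} Suppose $T$ is compact. For the Berezin vanishing, Lemma \ref{compact} gives $k_ze\to 0$ weakly as $\m(z,0)\to\infty$, so $\|Tk_ze_k\|_{\boa}\to 0$ by compactness, whence $|\ip{\tilde T(z)e_k}{e_i}_{\h}|\le\|Tk_ze_k\|_{\boa}\|k_ze_i\|_{\boa}\to 0$ for every $i,k$. For $T\in\mathcal{T}_{\lf}$, it suffices by norm-closedness of $\mathcal{T}_{\lf}$ and density of finite-rank operators in $\mathcal{K}(\boa)$ to verify that rank-ones $(K_we_k)\otimes(K_ze_l)^{*}$ lie in $\mathcal{T}_{\lf}$. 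Such a rank-one factors as $T_{E_{kl}}\circ\widetilde S\circ T_{E_{ll}}$, where $\widetilde S$ is the diagonal lift of the scalar rank-one $S=K_w\otimes K_z^{*}$ on $\Boa$. Now $S\in\mathcal{T}_{\textnormal{BUC}}$ by the scalar theory: the reproducing inequality $|f(z)|\le\|K_z\|_{\Boa}\|f\|_{\Boa}$ makes $\delta_z$ a Carleson measure, so \hyperref[A.8]{A.8} places $T_{\delta_z}=K(z,z)^{-1}K_z\otimes K_z^{*}$ in $\mathcal{T}_{\textnormal{BUC}}$, and the product identity $(K_w\otimes K_w^{*})(K_z\otimes K_z^{*})=K_w(z)\,K_w\otimes K_z^{*}$ produces the mixed rank-ones (handling $K_w(z)=0$ by density and continuity). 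Consequently $\widetilde S\in\mathcal{T}_{\textnormal{BUCO}}\subseteq\mathcal{T}_{\lf}$, so the rank-one belongs to $\mathcal{T}_{\lf}$.

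\textbf{Sufficiency.} Suppose $T\in\mathcal{T}_{\lf}$ with $\limsup_{\m(z,0)\to\infty}\tilde T(z)=0$. The equality $\mathcal{T}_{\lf}=\mathcal{T}_{\textnormal{BUCO}}$ places $T$ in $\mathcal{T}_{\textnormal{BUCO}}$, so Theorem \ref{Berezin} converts the Berezin hypothesis into $\limsup_{\m(z,0)\to\infty}\|T^{z}f\|_{\boa}=0$ for every $f\in\boa$. Corollary \ref{ToeCompact}(a) then gives $\|T\|_{e}\simeq\sup_{\|f\|_{\boa}\le 1}\limsup\|T^{z}f\|_{\boa}=0$, so $T$ is compact.

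\textbf{Main obstacle.} The main technical difficulty is the $\h$-valued bookkeeping: carefully verifying that matrix-unit Toeplitz operators and diagonal lifts factor the target operators through the Bergman projection as claimed, and that these lifts respect the norm closures defining $\mathcal{T}_{\textnormal{BUCO}}$ and $\mathcal{T}_{\lf}$. The core analytic content — that $\mathcal{K}(\Boa)\subseteq\mathcal{T}_{\textnormal{BUC}}$ under \hyperref[A.8]{A.8} — is already available in the cited references; the bulk of the present proof is organizing this into the matrix-valued framework.
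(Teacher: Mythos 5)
Your overall strategy mirrors the paper's: prove $\mathcal{T}_{\lf}=\mathcal{T}_{\textnormal{BUCO}}$ from \hyperref[A.8]{A.8} (Lemma~\ref{toeq}), get sufficiency from Theorem~\ref{Berezin}, and get necessity by reducing compacts to rank-ones and showing rank-ones lie in $\mathcal{T}_{\lf}$ via matrix units and a scalar Carleson-measure Toeplitz. The genuine deviation is in how the rank-ones are produced: the paper fixes the single measure $\delta_0$, uses $K_0\equiv\|K_0\|_{\Boa}$, and writes an explicit four-fold product identity $\sum_{i,k}T_{f_iE_{i,i}}T_{\|K_0\|_{\Boa}^{-1}\delta_0E_{i,i}}T_{\overline{g_k}E_{i,i}}T_{E_{i,k}}=f\otimes g$ for polynomials $f,g$, which sidesteps any vanishing-of-the-kernel issue; you instead use $\delta_z$ for varying $z$, pass through products $(K_w\otimes K_w^*)(K_z\otimes K_z^*)$ and a three-fold composition $T_{E_{kl}}\circ\widetilde S\circ T_{E_{ll}}$, and need an extra density-and-continuity argument to handle $K_z(w)=0$ (which is fine, since $w\mapsto K_w$ is $\Boa$-norm continuous and the zero set of $K_z(\cdot)$ is nowhere dense). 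Two small but real defects: first, $T_{\delta_z}=K_z\otimes K_z^*$, not $K(z,z)^{-1}K_z\otimes K_z^*$, though this scalar factor is harmless. Second, and more importantly, the assertion ``$\widetilde S\in\mathcal{T}_{\textnormal{BUCO}}\subseteq\mathcal{T}_{\lf}$'' is false if $\widetilde S$ denotes the full diagonal lift $S\otimes I$: that operator is not $d$-finite for any $d$, and one checks that no finite sum of products of Toeplitz operators with $\lf$ symbols can approximate it in operator norm (apply both to $fe_i$ with $i>d$). What the factorization actually requires is that $\widetilde S\,T_{E_{ll}}=S\otimes E_{ll}\in\mathcal{T}_{\textnormal{BUCO}}$, which does follow from $S\in\mathcal{T}_{\textnormal{BUC}}$ by the same norm-continuous truncated lift you already invoked in your proof of Lemma~\ref{toeq}. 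So the approach is sound and essentially the paper's; you should replace the claim about $\widetilde S$ itself with the claim about $\widetilde S\,T_{E_{ll}}$ (or equivalently about $S\otimes I^{(d)}$ for $d\geq\max(k,l)$), which is what the composition actually uses.
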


We will first prove the following lemma. The proof of the following lemma uses
Assumption \hyperref[A.8]{A.8}.
\begin{lm}\label{toeq}
On $\boa$, 
\begin{align*}
\mathcal{T}_{\textnormal{BUCO}}=\mathcal{T}_{\lf}.
\end{align*}
\end{lm}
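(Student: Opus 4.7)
The plan is to prove both inclusions separately. The containment $\mathcal{T}_{\textnormal{BUCO}} \subseteq \mathcal{T}_{\lf}$ is immediate from the definitions: every $u \in \textnormal{BUCO}$ is $d$-finite with bounded entries and thus already belongs to $\lf$, so the generators and norm-closure operation defining the left-hand algebra are permitted on the right as well. For the reverse inclusion $\mathcal{T}_{\lf} \subseteq \mathcal{T}_{\textnormal{BUCO}}$, since $\mathcal{T}_{\textnormal{BUCO}}$ is itself a norm-closed subalgebra of $\mathcal{L}(\bo)$, it is enough to show that an individual Toeplitz operator $T_u$ with $u \in \lf$ lies in $\mathcal{T}_{\textnormal{BUCO}}$. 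Given such a $u$, which is $d$-finite for some $d$, I would expand
\[
u(z) = \sum_{i,j=1}^{d} u_{ij}(z) E_{ij}, \qquad u_{ij}(z) := \ip{u(z) e_j}{e_i}_{\h},
\]
where $E_{ij}$ denotes the matrix unit $v \mapsto \ip{v}{e_j}_{\h} e_i$. Each $u_{ij}$ is in $L^{\infty}(\Om,\C;d\sigma)$, and linearity of the Toeplitz construction in its symbol gives $T_u = \sum_{i,j=1}^{d} T_{u_{ij} E_{ij}}$, so the problem reduces to each $T_{u_{ij} E_{ij}}$.

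The key step is a bookkeeping identity that lifts the scalar Toeplitz calculus on $\Bo$ to the matrix-valued calculus on $\bo$. A direct computation from the definitions (using that $P$ acts componentwise) shows that for any $a_1, \dots, a_k \in L^{\infty}$ and $f \in \bo$,
\[
T_{E_{i,1}}\, T_{a_1 E_{1,1}} T_{a_2 E_{1,1}} \cdots T_{a_{k-1} E_{1,1}}\, T_{a_k E_{1,j}} f = \bigl( T_{a_1} T_{a_2} \cdots T_{a_k} f_j \bigr) e_i,
\]
where the operators on the right are the scalar Toeplitz operators on $\Bo$. Equivalently, the linear map $\Phi_{ij} : \mathcal{L}(\Bo) \to \mathcal{L}(\bo)$ defined by $\Phi_{ij}(S) f := (S f_j) e_i$ is contractive, satisfies $\Phi_{ij}(T_a) = T_{a E_{ij}}$ for scalar $a$, and sends products of scalar Toeplitz operators to products of Toeplitz operators on $\bo$ whose symbols are either constant matrix units or of the form $a E_{1,1}$ with $a$ scalar. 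All such symbols are $d$-finite with bounded uniformly continuous entries whenever the scalars $a$ are in $\textnormal{BUC}(\Om,\m)$, and so they belong to $\textnormal{BUCO}$.

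The proof is then completed by invoking Assumption~\hyperref[A.8]{A.8}. Since $u_{ij} \in L^{\infty}$, the measure $u_{ij}\, d\sigma$ has total variation Carleson with respect to $\sigma$, so $T_{u_{ij}} \in \mathcal{T}_{\textnormal{BUC}}$ on $\Bo$. Given $\epsilon > 0$, pick a finite sum of finite products $S_{ij}^{\epsilon} = \sum_{l} \prod_{k} T_{a_{l,k}}$ with $a_{l,k} \in \textnormal{BUC}(\Om,\m)$ and $\|T_{u_{ij}} - S_{ij}^{\epsilon}\|_{\mathcal{L}(\Bo)} < \epsilon$; then $\Phi_{ij}(S_{ij}^{\epsilon}) \in \mathcal{T}_{\textnormal{BUCO}}$ by the preceding paragraph, and by contractivity of $\Phi_{ij}$ it approximates $T_{u_{ij} E_{ij}} = \Phi_{ij}(T_{u_{ij}})$ to within $\epsilon$ in $\mathcal{L}(\bo)$. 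Norm-closedness of $\mathcal{T}_{\textnormal{BUCO}}$ then yields $T_{u_{ij} E_{ij}} \in \mathcal{T}_{\textnormal{BUCO}}$, whence $T_u \in \mathcal{T}_{\textnormal{BUCO}}$. The only non-trivial ingredient is \hyperref[A.8]{A.8}; the main potential pitfall is to preserve both the operator-norm control and the $\textnormal{BUCO}$ status of the symbols through the scalar-to-matrix lift, which is precisely why the argument is routed through the contractive intertwiner $\Phi_{ij}$.
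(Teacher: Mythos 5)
Your proof is correct and follows the same route as the paper: both decompose $u$ entrywise as $\sum_{i,j}u_{ij}E_{ij}$, invoke Assumption A.8 (via the observation that each bounded entry $u_{ij}$ gives a Carleson measure $u_{ij}\,d\sigma$) to place the scalar operator $T_{u_{ij}}$ in $\mathcal{T}_{\textnormal{BUC}}$, and then transfer back to the matrix level. The one difference is that the paper closes with "it easily follows that $T_u\in\mathcal{T}_{\textnormal{BUCO}}$", while you make that transfer explicit through the contractive map $\Phi_{ij}(S)f=(Sf_j)e_i$ and the product identity $\Phi_{ij}(T_{a_1}\cdots T_{a_k})=T_{E_{i,1}}T_{a_1E_{1,1}}\cdots T_{a_{k-1}E_{1,1}}T_{a_kE_{1,j}}$, which is exactly the mechanism the paper's "easily" conceals; this is a worthwhile amplification rather than a new approach.
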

\begin{proof}
It is clear that $\mathcal{T}_{\textnormal{BUCO}}\subset\mathcal{T}_{\lf}$.
Now, let $u\in \lf$. Since $\ip{u e_k}{e_j}_{\h}$ is bounded, for every 
$k,j\in\N$ it follows that $\abs{\ip{u e_k}{e_j}_{\h}}d\sigma$ is Carleson with
respect to $\sigma$ for every $k,j\in\N$. So then the Toeplitz operator
on $\Boa$ with symbol $\abs{\ip{u e_k}{e_j}_{\h}}d\sigma$ is in 
$\mathcal{T}_{\textnormal{BUC}}$. Since $u$ is a finite symbol, it easily
follows that $T_{u}\in\mathcal{T}_{\textnormal{BUCO}}$. 
Thus $\mathcal{T}_{\lf}\subset\mathcal{T}_{\textnormal{BUCO}}$. This completes 
the proof.
\end{proof}

\begin{proof}[Proof of Theorem \ref{BerezinToe}]
If $T\in\mathcal{T}_{\lf}$ then the previous lemma shows that 
$T\in\mathcal{T}_{\textnormal{BUCO}}$. So
if $\limsup_{d(z,0)\to\infty} \tilde{T}(z)=0$ then by Theorem 
\ref{Berezin}, $T$ is compact. On the other hand, if $T$ is compact, 
then by Lemma \ref{compact} there holds that 
$\limsup_{d(z,0)\to\infty} \tilde{T}(z)=0$. So, we only need to show that
$T$ is in $\mathcal{T}_{\lf}$. Since $T$ is compact, it suffices to show that
each rank $1$ operator is in $\mathcal{T}_{\lf}$. The rank $1$ operators
are given by the formula:
\begin{align*}
(f\otimes g)(h) = \ip{h}{g}_{\bo}f.
\end{align*}
So, we need to show that $f\otimes g\in\mathcal{T}_{\lf}$. 
Let $p_f$ be a polynomial such that 
$\norm{f-p_f}_{\boa}<\frac{\epsilon}{\norm{g}_{\boa}}$ and
$p_g$ a polynomial such that 
$\norm{g-p_g}_{\boa}<\frac{\epsilon}{\norm{p_f}_{\boa}}$. Then for
$h\in\boa$ there holds:
\begin{align*}
\norm{(f\otimes g)h-(p_f\otimes p_g)h}_{\boa}
&=\norm{\ip{h}{g}_{\boa}f-\ip{g}{p_g}_{\boa}p_f}_{\boa}
\\&\leq\norm{\ip{h}{g}_{\boa}(f-p_f)}_{\boa}
    +\norm{\ip{h}{g-p_g}_{\boa}p_f}_{\boa}
\\&\leq 2\epsilon\norm{h}_{\boa}.
\end{align*}
Therefore, if we can show that $p_f\otimes p_g\in\mathcal{T}_{\lf}$, then
we will be finished. For the following computation, we use the following
notational conviniencies. Let $E_{i,j}$ be the matrix such that
$\ip{E_{i,j}e_k}{e_l}=1$ when $k=j$ and $l=i$ and zero otherwise. That is,
$E_{i,j}$ is the matrix with a $1$ in the $(i,j)$ position and zeros 
everywhere else. 
We abuse notation and write $f$ in place of $p_f$ and $g$ in place of 
$p_g$, keeping in mind that this means that $f$ and $g$ are both now 
polynomials. 
Lastly, we will abuse notation again and $P$ will also denote the projection 
from $L^{2}(\Om,\C;d\sigma)$ onto $\Boa$. Observe that if $f\in\boa$ then:
\begin{align*}
Pf=\sum_{i=1}^{\infty}(Pf_i)e_i,
\end{align*}
where on the left hand side, $P$ is the projection on $L^{2}(\Om,\h;d\sigma)$
and on the right hand side $P$ is the projection on $L^{2}(\Om,\C;d\sigma)$. 
Observe that since $K_0(z)=\norm{K_0}_{\Boa}k_0(z)$ and since $k_0\equiv 1$ on $\Om$, 
there holds that $K_0\equiv\norm{K_0}_{\Boa}$. 

Using these facts, we compute:
\begin{align*}
\sum_{i=1}^{\infty}\sum_{k=1}^{\infty}\left(T_{f_iE_{i,i}}
    T_{\delta_{0}E_{i,i}}
    T_{\overline{g_k}E_{i,i}}T_{E_{i,k}}h\right)(z)
&=\sum_{i=1}^{\infty}\sum_{k=1}^{\infty}\left(f_i
    T_{\delta_{0}E_{i,i}}
    T_{\overline{g_k}E_{i,i}}T_{E_{i,k}}h\right)(z)
\\&=\sum_{i=1}^{\infty}\sum_{k=1}^{\infty}f_i(z)
    \left(T_{\overline{g_k}E_{i,i}}T_{E_{i,k}}h\right)(0)
\\&=\sum_{i=1}^{\infty}\sum_{k=1}^{\infty}f_i(z)P
    \left(\overline{g_k}h_{k}e_i\right)(0)
\\&=\sum_{i=1}^{\infty}f_i(z)\sum_{k=1}^{\infty}\int_{\Om}h_k(w)
    \overline{g_k(w)}\overline{K_0(w)}d\sigma e_i
\\&=\norm{K_0}_{\Boa}\sum_{i=1}^{\infty}f_i(z)\int_{\Om}
    \sum_{k=1}^{\infty}h_k(w)
    \overline{g_k(w)}d\sigma e_i
\\&=\norm{K_0}_{\Boa}\ip{h}{g}f(z) 
=\norm{K_0}_{\Boa}(f\otimes g)h(z).
\end{align*}
We therefore conclude that:
\begin{align*}
\sum_{i=1}^{\infty}\sum_{k=1}^{\infty}\left(T_{f_iE_{i,i}}
    T_{\norm{K_0}_{\Boa}^{-1}\delta_{0}E_{i,i}}
    T_{\overline{g_k}E_{i,i}}T_{E_{i,k}}h\right)(z)
=(f\otimes g)h(z).
\end{align*}
Since pointwise evaluation is a bounded linear functional, we conclude that 
$\norm{K_0}_{\Boa}^{-1}\delta_0$ is a Carleson measure for $\Boa$ with respect to 
$\sigma$. Thus, $T_{\norm{K_0}_{\Boa}^{-1}\delta_0E_{i,i}}\in\mathcal{T}_{\lf}$ 
for every $i\in\N$.
Furthermore, each of the operators 
$T_{f_iE_{i,i}}$, $T_{\overline{g_k}E_{i,i}}$, and $T_{E_{i,k}}$ are 
Toeplitz operators with symbols in $\textnormal{BUCO}$ and so each one is
in $\mathcal{T}_{\lf}$. Since $f_i$ and $g_i$ are finite, the sums above
are finite. This implies that the operator given by the formula
\begin{align*}
\sum_{i=1}^{\infty}\sum_{k=1}^{\infty}\left(T_{f_iE_{i,i}}
    T_{\norm{K_0}_{\Boa}^{-1}\delta_{0}E_{i,i}}
    T_{\overline{g_k}E_{i,i}}T_{E_{i,k}}h\right)
\end{align*}
is a member of $\mathcal{T}_{\lf}$ and therefore, $(f\otimes g)$ is in 
$\mathcal{T}_{\lf}$ for all polynomials, $f$ and $g$. This completes the proof.
\end{proof}

\section{Acknowledgements} 
The author would like to thank Michael Lacey for supporting him as a
research assistant for the Spring semester of 2014 (NSF DMS grant \#1265570)
and Brett Wick for supporting him as a research
assistant for the Summer semester of 2014 (NSF DMS grant \#0955432) and for discussing
the problem with him.

\begin{bibdiv}
\begin{biblist}

\bib{AZ}{article}{
   author={Axler, S.},
   author={Zheng, D.},
   title={Compact operators via the Berezin transform},
   journal={Indiana Univ. Math. J.},
   volume={47},
   date={1998},
   number={2},
   pages={387--400}
}

\bib{AZ2}{article}{
   author={Axler, S.},
   author={Zheng, D.},
   title={The Berezin transform on the Toeplitz algebra},
   journal={Studia Math.},
   volume={127},
   date={1998},
   number={2},
   pages={113--136}
}

\bib{Bar}{article}{
   author={Baranov, A.},
   author={Chalendar, I.},
   author={Fricain, E.},
   author={Mashreghi, J.},
   author={Timotin, D.},
   title={Bounded symbols and Reproducing Kernel Thesis for truncated   
   Toeplitz operators},
   journal={J. Funct. Anal.},
   volume={259},
   date={2010},
   number={10},
   pages={2673-2701}
   
}

\bib{BI}{article}{
   author={Bauer, W.},
   author={Isralowitz, J.},
   title={Compactness characterization of operators in the Toeplitz algebra of the Fock space $F_\alpha^p$},
   journal={J. Funct. Anal.},
   volume={263},
   date={2012},
   number={5},
   pages={1323--1355},
   eprint={http://arxiv.org/abs/1109.0305v2}
   
}

\bib{CWZ}{article}{
   author={Cao, G.},
   author={Wang, X.},
   author={Zhu, K.},
   title={Boundedness and compactness of operators on the Fock space},
   eprint={http://arxiv.org/pdf/1211.7030v1.pdf},
   status={preprint},
   pages={1--16},
   date={2012}
}

\bib{CLNZ}{article}{
   author={Choe, B. R.},
   author={Lee, Y. J.},
   author={Nam, K.},
   author={Zheng, D.},
   title={Products of Bergman space Toeplitz operators on the polydisk},
   journal={Math. Ann.},
   volume={337},
   date={2007},
   number={2},
   pages={295--316}
}

\bib{CPZ}{article}{
   author={Cao, G.},
   author={Wang, X.},
   author={Zhu, K.},
   title={Products of Toeplitz operators on the Fock space},
   eprint={http://arxiv.org/abs/1212.0045},
   status={preprint},
   pages={1--8},
   date={2012}
}

\bib{CR}{article}{
   author={Coifman, R. R.},
   author={Rochberg, R.},
   title={Representation theorems for holomorphic and harmonic functions in
   $L^{p}$},
   conference={
      title={Representation theorems for Hardy spaces},
   },
   book={
      series={Ast\'erisque},
      volume={77},
      publisher={Soc. Math. France},
      place={Paris},
   },
   date={1980},
   pages={11--66}
}

\bib{E92}{article}{
   author={Engli{\v{s}}, M.},
   title={Density of algebras generated by Toeplitz operator on Bergman spaces},
   journal={Ark. Mat.},
   volume={30},
   date={1992},
   pages={227--243}
}

\bib{E}{article}{
   author={Engli{\v{s}}, M.},
   title={Compact Toeplitz operators via the Berezin transform on bounded
   symmetric domains},
   journal={Integral Equations Operator Theory},
   volume={33},
   date={1999},
   number={4},
   pages={426--455}
}

\bib{FK}{article}{
   author={Faraut, J},
   author={Koranyi, A},
   title={Function spaces and reproducing kernels on bounded symmetric domains},
   journal={J. Funct. Anal.},
   volume={89},
   date={1990},
   pages={64--89}
   
}

\bib{G}{book}{
   author={Grafakos, Loukas},
   title={Classical Fourier analysis},
   series={Graduate Texts in Mathematics},
   volume={249},
   edition={2},
   publisher={Springer, New York},
   date={2008},
   pages={xvi+489}
}

\bib{IMW}{article}{
   author={Isralowitz, J.},
   author={Mitkovski, M.},
   author={Wick, B. D.},
   title={Localization and Compactness in Bergman and Fock spaces},
   eprint={http://arxiv.org/abs/1306.0316v3},
   status={to appear in J. Funct. Anal.},
   pages={1--18},
   date={2012}
}

\bib{I}{article}{
   author={Issa, H.},
   title={Compact Toeplitz operators for weighted Bergman spaces on 
   bounded symmetric domains},
   journal={Integral Equations Operator Theory},
   volume={70},
   date={2011},
   number={4},
   pages={569--582}
}

\bib{K}{article}{
   author={Kerr, Robert},
   title={Products of Toeplitz operators on a vector valued Bergman 
space},
   journal={Integral Equations Operator Theory},
   volume={66},
   date={2010},
   number={3},
   pages={367--395}
}

\bib{kra}{book}{
   author={Krantz, S.}
   title={Function Theory of Several Complex Variables},
   publisher={Wadsworth \& Brooks/Cole Advanced Books and Software},
   place={Pacific Grove, California},
   date={1992}
}

\bib{LH}{article}{
   author={Li, S. X.},
   author={Hu, J. Y.},
   title={Compact operators on Bergman spaces of the unit ball},
   language={Chinese, with English and Chinese summaries},
   journal={Acta Math. Sinica (Chin. Ser.)},
   volume={47},
   date={2004},
   number={5},
   pages={837--844}
}

\bib{MZ}{article}{
   author={Miao, J.},
   author={Zheng, D.},
   title={Compact operators on Bergman spaces},
   journal={Integr. Equ. Oper. Theory},
   volume={48},
   date={2004},
   pages={61--79}
}

\bib{M}{book}{
   author={Mikusi{\'n}ski, Jan},
   title={The Bochner integral},
   note={Lehrb\"ucher und Monographien aus dem Gebiete der exakten
   Wissenschaften, Mathematische Reihe, Band 55},
   publisher={Birkh\"auser Verlag, Basel-Stuttgart},
   date={1978},
   pages={xii+233},
   isbn={3-7643-0865-6},
   review={\MR{0492147 (58 \#11296)}},
}

\bib{MW}{article}{
   author={Mitkovski, M.},
   author={Wick, B. D.},
   title={The Essential Norm of Operators on $A^p(\mathbb{D}^n)$},
   eprint={http://arxiv.org/abs/1208.5819},
   status={preprint},
   pages={1--45},
   date={2012}
}

\bib{MW2}{article}{
   author={Mitkovski, Mishko},
   author={Wick, Brett D.},
   title={A reproducing kernel thesis for operators on Bergman-type function
   spaces},
   journal={J. Funct. Anal.},
   volume={267},
   date={2014},
   number={7},
   pages={2028--2055}
}

\bib{MSW}{article}{
   author={Mitkovski, Mishko},
   author={Su{\'a}rez, Daniel},
   author={Wick, Brett D.},
   title={The essential norm of operators on $A^p_\alpha(\Bbb{B}_n)$},
   journal={Integral Equations Operator Theory},
   volume={75},
   date={2013},
   number={2},
   pages={197--233}
}

\bib{PS}{article}{
   author={Pott, S.},
   author={Strouse, E.},
   title={Products of Toeplitz operators on the Bergman spaces 
   $A^2_\alpha$},
   journal={Algebra i Analiz},
   volume={18},
   date={2006},
   number={1},
   pages={144--161},
   issn={0234-0852},
   translation={
      journal={St. Petersburg Math. J.},
      volume={18},
      date={2007},
      number={1},
      pages={105--118},
      issn={1061-0022},
   }
}

\bib{R}{article}{
   author={Rahm, Robert}
   title={Compact operators on vector--valued bergman space via the 
    berezin transform},
   eprint={http://arxiv.org/abs/1407.5244v1},
   status={preprint},
   pages={1--15},
   date={2014}
}

\bib{RW}{article}{
    author={Rahm,Robert},
    author={Wick,B.D},
    title={The essential norm of operators on the bergman space of 
    vector--valuedfunctions on the unit ball},
    eprint={http://arxiv.org/abs/1407.4786v2},
    status={preprint},
    pages={1--33},
    date={2014}
}

\bib{R}{article}{
   author={Raimondo, R.},
   title={Toeplitz operators on the Bergman space of the unit ball},
   journal={Bull. Austral. Math. Soc.},
   volume={62},
   date={2000},
   number={2},
   pages={273--285}
}

\bib{Sm}{article}{
   author={Smith, M},
   title={The reproducing kernel thesis for Toeplitz operators on the 
   Paley-Wiener space},
    journal={Integral equations operator theory },
   volume={49},
   date={2004},
   number={1},
   pages={111--122}
}

\bib{St}{article}{
   author={Stroethoff, K.},
   title={Compact Toeplitz operators on Bergman spaces},
   journal={Math. Proc. Cambridge Philos. Soc.},
   volume={124},
   date={1998},
   number={1},
   pages={151--160}
}

\bib{SZ}{article}{
   author={Stroethoff, K.},
   author={Zheng, D.},
   title={Toeplitz and Hankel operators on Bergman spaces},
   journal={Trans. Amer. Math. Soc.},
   volume={329},
   date={1992},
   number={2},
   pages={773--794}
}

\bib{SZ2}{article}{
   author={Stroethoff, K.},
   author={Zheng, D.},
   title={Products of Hankel and Toeplitz operators on the Bergman space},
   journal={J. Funct. Anal.},
   volume={169},
   date={1999},
   number={1},
   pages={289--313}
}

\bib{Sua}{article}{
   author={Su{\'a}rez, D.},
   title={The essential norm of operators in the Toeplitz algebra on 
   $A^p(\mathbb{B}_n)$},
   journal={Indiana Univ. Math. J.},
   volume={56},
   date={2007},
   number={5},
   pages={2185--2232}
}

\bib{TW}{article}{
   author={Thorp, Edward},
   author={Whitley, Robert},
   title={The strong maximum modulus theorem for analytic functions into 
   a Banach space},
   journal={Proc. Amer. Math. Soc.},
   volume={18},
   date={1967},
   pages={640--646}
}

\bib{YS}{article}{
   author={Yu, T.},
   author={Sun, S. L.},
   title={Compact Toeplitz operators on the weighted Bergman spaces},
   language={Chinese, with English and Chinese summaries},
   journal={Acta Math. Sinica (Chin. Ser.)},
   volume={44},
   date={2001},
   number={2},
   pages={233--240}
}

\bib{ZhD}{article}{
   author={Zheng, D.},
   title={Toeplitz operators and Hankel operators},
   journal={Integral Equations Operator Theory},
   volume={12},
   date={1989},
   number={2},
   pages={280--299}
}

\bib{Zhu}{book}{
   author={Zhu, K.},
   title={Spaces of holomorphic functions in the unit ball},
   series={Graduate Texts in Mathematics},
   volume={226},
   publisher={Springer-Verlag},
   place={New York},
   date={2005},
   pages={x+271}
}

\end{biblist}
\end{bibdiv}
\end{document}